\numberwithin{equation}{section}
\theoremstyle{plain}
\newtheorem{Th}{Theorem}[section]
\newtheorem{Lemma}[Th]{Lemma}
\newtheorem{Cor}[Th]{Corollary}
\newtheorem{St}[Th]{Statement}
 \theoremstyle{definition}
\newtheorem{Def}[Th]{Definition}
\newtheorem{Conj}[Th]{Conjecture}
\newtheorem{Rem}[Th]{Remark}
\newtheorem{Ex}[Th]{Example}
\newtheorem{?}[Th]{Problem}
\newtheorem{Pro}[Th]{Problem}
\newtheorem{Prob}[Th]{Problem}
 \newcommand{\zj}[1]{\left({#1}\right)}
 \newcommand{\low}[1]{\left\lfloor{#1}\right\rfloor}
 \newcommand{\up}[1]{\left\lceil {#1}\right\rceil }
     \newcommand\B{{\mathcal B}}
\newcommand{\setZ}{{\mathbb Z}}
\newcommand{\setR}{{\mathbb R}}
\newcommand{\setN}{{\mathbb N}}
\newcommand\1[1]{{ \bf 1}_{#1}}
\newcommand\Mf{{\overline M}}
\newcommand\Ma{{\underline M}}
\newcommand\Mabs{{M^*_*}}
\newcommand\df{{\overline d}}
\newcommand\da{{\underline d}}
\newcommand\coo{\overset{1}{\sim}}
\newcommand\col{\overset{{\rm lim}}{\sim}}
\newcommand\vr{\Subset}
\begin{document}

\title{Densitometria I. Discrete groups}

\author{Szil\' ard Gy. R\' ev\' esz}
\address{HUN-REN Alfr\' ed R\' enyi Institute of Mathematics\\
     Budapest, Pf. 127\\
     H-1364 Hungary}
\email{revesz.szilard@renyi.hu}
\thanks{Supported by NKFIH projects No. K-146387, K-147153 and E-151341.}

\author{Imre Z. Ruzsa}
\address{HUN-REN Alfr\' ed R\' enyi Institute of Mathematics\\
     Budapest, Pf. 127\\
     H-1364 Hungary}
\email{ruzsa@renyi.hu}
\thanks{Supported by NKFIH projects No. K-146387 and K-133819.}

\subjclass{Primary 22B05; Secondary 22B99, 05B10.}

\keywords{Discrete commutative groups, density, upper and lower density, mean value, commarginality}

    \begin{abstract}
An upper mean here is a subadditive functional $\overline M$   defined on bounded functions on a commutative group
which has, beside some natural requirements,  the property we call restricted additivity:
if $g(x)= f(x)+f(x+t)$, then $\overline M(g)= 2 \Mf(f)$.  This tries to grasp that it should not depend on
local properties. This naturally induces a lower mean, and when they coincide it is the mean. Restriction
to 0--1 valued functions (sets) is a density.

We answer the following questions:

      Given a functional defined on a subset of all functions, when is it a mean?

      Given a functional, which is a mean, how do we find the upper mean it came from? Is it unique?

     Given a function $f$, what are the possible values of $\overline M(f)$, for upper means $\overline M$?

In particular, we find the extremal means and give several expressions for it. We propose the names ``lowest and uppermost mean'' for them to replace the not really justified names ``lower and upper Banach mean and density''.

We also consider analogous questions for densities, with partial answers only.
    \end{abstract}

     \maketitle

     \section{Introduction}

     There are many concepts of density in use. For sets of integers we have the
\emph{asymptotic upper density}, defined as
\[ \overline d(A) = \limsup \frac{|A \cap [1,n]|} n, \]
     its symmetric variant, where the interval $[-n, n]$ is used, the logarithmic density and others;
each has the corresponding lower density, and when they coincide, they are called the density.
There is one, generally called the \emph{Banach density} defined as
        \[ d^*(A) = \lim_{n\to\infty} \frac{ \max |A \cap [t+1, t+n]|} n . \]
We shall propose the alternative name ``uppermost density'', since we
not find any connection with Banach. The first similarly defined concept we could locate is
from P\'olya \cite{polya29} (he says it ``deserves less attention'', see page 561), and the first real application by Kahane \cite{kahane54}, see also \cite{Kahane-AnnInstF}, page 54.

In the 60s the notion resurfaced in several works. Landau \cite{Landau} referred to lectures of Beurling -- related studies of Beurling himself was published only later in his collected works, see \cite{beurling} and \cite{beurlingB}. The notion was introduced in relation to packing in geometry by Groemer \cite{Groemer}. The name ``Banach density'' originated most probably from F\"urstenberg \cite{Furst}. The extension of the density to groups -- in particular when the group is non-metrisable -- required new constructions. This has been done in \cite{GrochKutySeip} and also in \cite{dissertation}. The extended notion has been successfully applied to questions of interpolation \cite{GrochKutySeip}, densities of packing \cite{LCATur}, \cite{elena-szilard}, and syndetic sets \cite{MathPann}.

Since there is no natural analog of the intervals in other groups, or there are several
possible candidates (balls, cubes etc.), the extension of this concept may proceed in several ways.
They happen to be equivalent, and equivalence is proved on an ad hoc basis. Our aim is to
develop a theory of densities in groups, and point out the special position of the generalized Banach density among them.

        A set can be identified with a 0-1 valued function. We found it preferable to treat
general bounded functions instead, then specialize to sets. So first we consider the concept of a mean.

In this first part we restrict ourselves to commutative discrete groups. The starting point was sets of positive integers, which is only a semigroup; we plan to return to this and to the more general situation where the structure is a locally compact commutative group.

     \section{Upper and lower means}

     Let $G$ be an infinite commutative (discrete) group.

     \begin{Def} 
     A functional $\overline M$, which assigns a real number to every bounded real function
defined on $G$, is an
 \emph{upper mean}, if the following hold:

     (a) Norming: if $f$ is a constant function, $f\equiv c$, then $\overline M(f)=c$.

     (b) Monotonicity: if $f\leq g$, then $\overline M(f)\leq \Mf(g)$.

     (c) Homogeneity: if $f=cg$ with a constant $c \geq 0$, then $\overline M(f)=c \overline M(g)$

     (d) Restricted additivity: if $g(x)= f(x)+f(x+t)$,
 then $\overline M(g)= 2 \Mf(f)$.

     (e) Subadditivity: $\overline M(f+g) \leq  \overline M(f)+\overline M(g)$.
     \end{Def}

Translation invariance will follow from these assumptions. Property (d) is a way to express that
the men does not depend on local properties, like $x$ being odd or even. On the other hand,
invariance under automorphisms of  $G$ (e.g. $x  \hookrightarrow -x$ in $\setZ$)  is not assumed.
Upper asymptotic mean on $\setN$, that is
 \[ \limsup \frac{1}{n} \sum_{j=1}^n f(j) \]
is an admissible upper mean in our sense.

{\bf Convention. }
In the sequel we shall write simply ``function'' to mean  bounded real function
defined on $G$.

     \begin{St}
       Instead of assumption (b) above, the following special case suffices:

(b') Negativity:  if $f\leq 0$, then $\overline M(f)\leq 0$.
     \end{St}
     \begin{proof}
       Indeed, if $f\leq g$, by subaditivity we get
        \[ \Mf(f) \leq \Mf(g) + \Mf(f-g) \leq \Mf(g) .\]
     \end{proof}

     \begin{Rem}
       Instead of (b) it is not sufficient to assume

(b'') Positivity: if $f\geq  0$, then $\overline M(f)\geq 0$.
     \end{Rem}

     \begin{Ex}
       On the set of integers define
        \[ M(f) = \limsup \frac{1}{n} \zj{2 \sum_{i=1}^{2n} f(i) - 3\sum_{i=1}^{n} f(i) }. \]
Properties (a), (c), (e) are obvious. Property (d) is easy and we suppress the simple proof.
For a nonnegative $f$ let
 \[ \alpha = \limsup \frac{1}{n}\sum_{i=1}^{n} f(i),  \]
the usual asymptotic upper mean. For all large $n$ we have
 \[  \frac{1}{n}\sum_{i=1}^{n} f(i) < \alpha+\varepsilon,  \]
and for infinitely many values
\[  \frac{1}{n}\sum_{i=1}^{2n} f(i) > 2 \alpha-\varepsilon,  \]
hence
 \[ M(f) > 2(2\alpha-\varepsilon) - 3(\alpha+\varepsilon) = \alpha - 5\varepsilon,\]
which shows the positivity of $M(f)$.

 To see a negative
 function with a positive $M$ let
  \[ f(n) =
  \begin{cases}
    -1 & \text{ if } 2^{2i-1} <n \leq 2^{2i}, \\ 0 & \text{ otherwise.}
  \end{cases} \]
For $n=2^{2i}$ the sum in $(n+1, 2n]$ vanishes, hence we have
 \[  \frac{1}{n} \zj{2 \sum_{i=1}^{2n} f(i) - 3\sum_{i=1}^{n} f(i) }= - \frac{1}{n}  \sum_{i=1}^{n} f(i) > n/2,\]
so $M(f)\geq 1/2$.
     \end{Ex}

     \begin{St}
Instead of (d) we can assume

(d') \emph{restricted substractivity}: if $h(x)= f(x)-f(x+t)$,
then $\overline M(h)= 0$.
     \end{St}
     \begin{proof}
        Restricted substractivity implies resricted additivity as follows. If $g(x)= f(x)+f(x+t)$,
        then $g+h=2f$, so
         \[ 2 \Mf(f) = \Mf(2f) \leq \Mf(g) + \Mf(h) = \Mf(g), \]
         and
          \[ \Mf(g) \leq \Mf(2f) + \Mf(-h) = \Mf(2f); \]
we used that $-h(x) = f'(x)-f'(x-t)$ with $f'(x)=f(x+t)$.

The converse implication is a particular case of the following theorem.
     \end{proof}

\begin{Th}[General restricted additivity] \label{genrest}
Let  $\overline M$ be an upper mean, $f$ a  function, $c_i$ a sequence of real numbers such that
$ \sum |c_i| < \infty $ and $ \sum c_i \geq 0$, $t_i \in G$. For $g(x) = \sum c_i f(x+t_i)$ we have
  $\overline M(g)=\sum c_i \Mf(f)$.
\end{Th}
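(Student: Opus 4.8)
The plan is to peel the general combination down to two special cases and then reassemble. Throughout I would use translation invariance $\overline M(f_t)=\overline M(f)$, where $f_t(x)=f(x+t)$: applying (d) to $f+f_t$ gives $\overline M(f+f_t)=2\overline M(f)$, while (e) gives $\overline M(f+f_t)\le\overline M(f)+\overline M(f_t)$, so $\overline M(f)\le\overline M(f_t)$, and the reverse inequality follows by applying the same to $f_t$ with $-t$. I would also repeatedly use that $\overline M(d)=\overline M(-d)=0$ as soon as $\overline M(d)\le0$ and $\overline M(-d)\le0$, since $0=\overline M(0)\le\overline M(d)+\overline M(-d)$.

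\emph{Step 1: $\overline M(f-f_t)=0$.} This is the crucial step, and the one place where restricted additivity (d) must be exploited genuinely rather than through a formal consequence. The idea is doubling against telescoping. Put $H_m(x)=f(x)-f(x+mt)$; then $H_{2m}(x)=H_m(x)+H_m(x+mt)$, so (d) gives $\overline M(H_{2m})=2\overline M(H_m)$, and since $H_1=f-f_t$ induction yields $\overline M(H_{2^k})=2^k\overline M(f-f_t)$ for every $k$. On the other hand $f$ is bounded, so $|H_m|\le 2\|f\|_\infty$ for all $m$, whence $|\overline M(H_{2^k})|\le 2\|f\|_\infty$ by monotonicity and norming; letting $k\to\infty$ forces $\overline M(f-f_t)=0$. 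Applying this to translates of $f$ I get $\overline M(f_u-f_v)=0$ for all $u,v\in G$, and likewise $\overline M(f_v-f_u)=0$.

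\emph{Step 2: if $d=\sum_i b_i f_{s_i}$ with $\sum_i|b_i|<\infty$ and $\sum_i b_i=0$, then $\overline M(d)=\overline M(-d)=0$.} In the finite case, fix the first index and use $\sum b_i=0$ to write $d=\sum_{i\ge2}b_i(f_{s_i}-f_{s_1})$; by Step 1 and homogeneity each summand has mean zero, whatever the sign of $b_i$, so subadditivity gives $\overline M(d)\le0$, likewise $\overline M(-d)\le0$, hence both vanish. In the countable case I would truncate at $n$: the tail $\rho_n=\sum_{i>n}b_i f_{s_i}$ satisfies $\|\rho_n\|_\infty\le\|f\|_\infty\sum_{i>n}|b_i|\to0$, so $\overline M(\rho_n)\to0$; the head $d_n=\sum_{i\le n}b_i f_{s_i}$ equals $C_n f_{s_1}+\sum_{2\le i\le n}b_i(f_{s_i}-f_{s_1})$ with $C_n=\sum_{i\le n}b_i$, so by the finite argument and homogeneity $\overline M(d_n)\le\overline M(C_n f_{s_1})\le|C_n|\max(|\overline M(f)|,|\overline M(-f)|)$. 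Since $\sum b_i=0$ gives $C_n\to0$, we obtain $\overline M(d)\le\overline M(d_n)+\overline M(\rho_n)\to0$, and symmetrically $\overline M(-d)\le0$, so both are zero.

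\emph{Step 3: the theorem.} Given $g=\sum_i c_i f_{t_i}$ with $\sum_i|c_i|<\infty$ and $C:=\sum_i c_i\ge0$ (the empty sum being trivial), set $d=g-Cf_{t_1}$. This is an absolutely summable combination of translates of $f$ whose coefficients sum to $\sum_i c_i-C=0$, so $\overline M(d)=\overline M(-d)=0$ by Step 2. From $g=Cf_{t_1}+d$, subadditivity together with homogeneity (valid since $C\ge0$) and translation invariance give $\overline M(g)\le C\overline M(f)$, while $C\overline M(f)=\overline M(Cf_{t_1})=\overline M(g+(-d))\le\overline M(g)$; hence $\overline M(g)=C\overline M(f)=\sum_i c_i\,\overline M(f)$. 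Apart from Step 1, the one delicate point is the passage to infinite sums in Step 2: one cannot simply invoke the finite identity for the partial sums $d_n$, since their coefficient sums $C_n$ need not be nonnegative even when $C\ge0$; this is why the argument is routed through the zero-sum lemma using only the crude estimate $\overline M(C_n f_{s_1})\le|C_n|\cdot\mathrm{const}$.
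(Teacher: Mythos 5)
Your proof is correct, but it takes a genuinely different route from the paper's. The paper's central device is averaging: iterating (d) shows that $\overline M(f)$ equals $\overline M$ of the average of $f$ over the hypercube of translates $j_1t_1+\cdots+j_kt_k$, $0\le j_i<2^n$; doing the same to $g$ produces two averages that differ only on ``boundary'' multi-indices of relative weight $2^{-kn}\bigl((2^n+1)^k-(2^n-1)^k\bigr)\to0$, and the estimate $|\overline M(f)-\overline M(g)|\le\sup|f-g|$ finishes the finite case with $\sum c_i=1$. You instead prove restricted subtractivity $\overline M(f-f_t)=0$ directly by the telescoping--doubling trick ($H_{2m}=H_m+(H_m)_{mt}$ forces $2^k\,\overline M(f-f_t)$ to stay bounded), whereas the paper obtains this only as a corollary of the theorem itself (see the remark after Statement 2.4 that the implication (d)$\Rightarrow$(d$'$) ``is a particular case of the following theorem''); you then decompose $g$ as $C f_{t_1}$ plus a zero-sum, absolutely summable combination of differences of translates, each of which has upper mean $0$. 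Your route is more elementary and more modular, and it handles the infinite case more cleanly: because your zero-sum lemma imposes no sign condition on the partial coefficient sums, you avoid the paper's $2\varepsilon f(x)$ regularizer, which exists precisely to keep the truncated coefficient sum positive (a point you correctly flag). What the paper's averaging buys in exchange is that the hypercube of translates is a prototype of the F\o lner-type sets that reappear in Lemmas \ref{aligno} and \ref{joparketta} and in the witness machinery, so the same idea gets reused; your doubling argument is a one-off, though a slick one.
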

\begin{proof}
  We will now and then use the following easy consequence of the assumptions:
   \[ | \Mf(f) - \Mf(g) | \leq \sup |f(x)-g(x)| . \]
  Consider first the case when the sequence is finite, of length $k$, and $\sum c_i = 1$.

  We can write restricted additivity in the form
   \[ \Mf (f) = \Mf \zj{ \frac{f(x)+f(x+t)}{2} }\]
(we use $x$ as a dummy variable). Applying this equation to the right hand side with $2t, 4t, \ldots, 2^{n-1}t$
in the place of $t$ we obtain
 \[ \Mf (f) = \Mf \zj{ 2^{-n} \sum_{j=0}^{2^n-1} f(x+jt)} .   \]
By applying this successively for $t=t_1, \ldots, t_k$ we get
 \[ \Mf (f) = \Mf \zj{ 2^{-kn} \sum_{j_1=0}^{2^n-1} \ldots  \sum_{j_k=0}^{2^n-1}  f(x+j_1t_1 + \ldots + j_kt_k)} =\Mf (f').   \]
The same transformation applied to the function $g$ yields
\[ \Mf (g) = \Mf \zj{ 2^{-kn} \sum_{j_1=0}^{2^n-1} \ldots  \sum_{j_k=0}^{2^n-1}  g(x+j_1t_1 + \ldots + j_kt_k)} =   \]
 \[ = \Mf \zj{ 2^{-kn} \sum_{i=1}^k c_i  \sum_{j_1=0}^{2^n-1} \ldots  \sum_{j_k=0}^{2^n-1}  f(x+j_1t_1 + \ldots + j_kt_k+t_i)} = \Mf(g').  \]
The sums defining $f'$ and $g'$ are almost identical: both contain summands of the form
$ f(x+j_1t_1 + \ldots + j_kt_k)$ with $0\leq j_i \leq 2^n$, and of total weight 1 if all $j_i$ satisfy $0< j_i < 2^n$.
If some $j_i$ is 0 or $2^n$, then their weights may differ, the difference being at most $\sum |c_i| $. The number of such
$k$-tuples is $\leq (2^n+1)^k-(2^n-1)^k$.
Consequently
 \[ |  \Mf(f) - \Mf(g) | =  |  \Mf(f') - \Mf(g') | \leq  \sup |f'-g'| \]
 \[ \leq  2^{-kn} \zj{(2^n+1)^k-(2^n-1)^k} \sum |c_i| \sup |f(x)|. \]
This bound tends to 0 as $n \to\infty$ which concludes the proof of this subcase.

Homogenity yields the case of finitely many terms with $\sum c_i > 0$.

To show the general case take an $\varepsilon>0$, and choose $k$ so that
 \[ \sum_{i>k} |c_i| < \varepsilon . \]
 Consider
  \[ g'(x) = 2\varepsilon f(x) + \sum_{i=1}^k c_i f(x+t_i).    \]
  This is covered by the previous case, so we have
   \[  \Mf(g')=\zj{ 2\varepsilon + \sum_{i=1}^k c_i } \Mf(f) ,\]
   and also
    \[ | \Mf(g') - \Mf(g) | \leq \sup |g'(x)-g(x)| \leq 3\varepsilon \sup |f(x)| . \]
These equations together imply the claim.
\end{proof}

To show that a given functional is an upper mean we shall check criteria (a), (b'), (c), (d'), (e).

      \begin{Def} 
   A functional $\underline M$, which assigns a real number to every function
defined on $G$, is a
 \emph{lower mean}, if it satisfies requirements (a)-(d) above, and instead of (e) it has the
opposite property:

     (e') Superadditivity: $\underline M(f+g) \geq  \underline M(f)+\underline M(g)$.
     \end{Def}

     There is a natural correspondence between upper and lower means:
$\underline M$ is a lower mean if and only if
$\overline M(f)=-\underline M(-f)$ is an upper mean. We shall call such a pair of
an upper and a lower mean \emph{conjugates}. (Exercise: show that $\Ma(f) \leq \Mf(f)$.)

\section{The mean when it exists}

If $\overline M(f)=\underline M(f)$, we call their common value $M(f)$ the \emph{mean},
and the function \emph{medial}.

In the sequel we try to aswer the following questions

      Given a functional defined on a subset of all functions, when is it a mean?

      Given a functional, which is a mean, how do we find the upper mean it came from? Is it unique?

     Given a function $f$, what are the possible values of $\overline M(f)$, for upper means $\overline M$?

     \begin{Th}\label{medials}
      Medial functions  form a linear subspace $L$, and the mean is a linear functional on $L$.
It has the following further properties:

  (a) Norming: constants belong to $L$, and if $f$ is a constant function, $f\equiv c$, then $ M(f)=c$.

     (b) Monotonicity: if $f\leq g$, $f.g\in L$, then $ M(f)\leq  M(g)$.

     (c) Closedness: if a function $f$ has the property that for every $\varepsilon>0$ there are
 functions
$\underline g, \overline g \in L$ such that $\underline g\leq f \leq  \overline g$ and
$ M(\overline g)-M(\underline g) < \varepsilon$, then $f$ is also medial.

(d) For every function $f$, the functions $f(x+t)-f(x)$ are medial and their mean is 0.
     \end{Th}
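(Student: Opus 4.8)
The plan is to derive everything from the conjugate identity $\underline M(f)=-\overline M(-f)$ together with the five axioms of an upper mean (and the monotonicity they entail) and Theorem~\ref{genrest}. First I would show $L$ is a linear subspace. For scalars: if $c\ge 0$, homogeneity gives $\overline M(cf)=c\overline M(f)$ and, via the conjugate identity, $\underline M(cf)=-\overline M(-cf)=-c\overline M(-f)=c\underline M(f)$; while the conjugate identity directly gives $\overline M(-f)=-\underline M(f)$ and $\underline M(-f)=-\overline M(f)$, so if $f$ is medial then so is $-f$, with $M(-f)=-M(f)$. For sums: if $f,g\in L$, subadditivity gives $\overline M(f+g)\le M(f)+M(g)$ and superadditivity of $\underline M$ gives $\underline M(f+g)\ge M(f)+M(g)$; since $\underline M\le\overline M$ always, both are equalities, so $f+g\in L$ with $M(f+g)=M(f)+M(g)$. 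This proves $L$ is a subspace and $M$ is linear on it.

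For (a), norming of $\overline M$ gives $\overline M(c)=c$, and $\underline M(c)=-\overline M(-c)=c$, so constants are medial with the right value. For (b), if $f\le g$ with $f,g\in L$, then monotonicity of $\overline M$ (axiom (b), or its consequence from (b')) gives $M(f)=\overline M(f)\le\overline M(g)=M(g)$; alternatively one can use linearity to reduce to $M(g-f)\ge 0$ and invoke negativity applied to $f-g\le 0$.

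For (c), I would first record that $\underline M$ is also monotone: if $u\le v$ then $-u\ge -v$, so $\overline M(-u)\ge\overline M(-v)$, whence $\underline M(u)=-\overline M(-u)\le-\overline M(-v)=\underline M(v)$. Now, given $f$ with the sandwich property and $\varepsilon>0$, choose $\underline g\le f\le\overline g$ in $L$ with $M(\overline g)-M(\underline g)<\varepsilon$. Monotonicity of both means yields $M(\underline g)=\underline M(\underline g)\le\underline M(f)\le\overline M(f)\le\overline M(\overline g)=M(\overline g)$, so $0\le\overline M(f)-\underline M(f)<\varepsilon$; letting $\varepsilon\to 0$ gives $\overline M(f)=\underline M(f)$, i.e.\ $f\in L$.

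Finally, for (d), fix a (bounded) function $f$ and $t\in G$, and apply Theorem~\ref{genrest} to the two-term sequence $c_1=1$ at $t_1=t$ and $c_2=-1$ at $t_2=0$: then $\sum|c_i|=2<\infty$ and $\sum c_i=0\ge 0$, so $\overline M\bigl(f(\cdot+t)-f\bigr)=\bigl(\sum c_i\bigr)\overline M(f)=0$. Applying the same statement to the function $x\mapsto f(x)-f(x+t)$ (or to $-f$) gives $\overline M\bigl(f-f(\cdot+t)\bigr)=0$, hence $\underline M\bigl(f(\cdot+t)-f\bigr)=-\overline M\bigl(f-f(\cdot+t)\bigr)=0$; thus $f(\cdot+t)-f\in L$ with mean $0$. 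There is no deep obstacle here: the only thing that needs care is consistently converting statements about $\underline M$ into statements about $\overline M$ through the conjugate identity (keeping the signs straight), and explicitly establishing monotonicity of the lower mean before using it in part (c).
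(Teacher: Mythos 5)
Your proof is correct and follows essentially the same route as the paper: the key step for sums is the identical chain $\overline M(f+g)\le\overline M(f)+\overline M(g)=\underline M(f)+\underline M(g)\le\underline M(f+g)$, and part (d) is obtained, as in the paper, from Theorem~\ref{genrest} (equivalently, restricted subtractivity). You simply spell out the conjugation and monotonicity details that the paper dismisses as immediate, which is fine.
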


     \begin{proof}
If $f$ is medial, then $ \Ma(f) = \Mf(f) = -\Ma(-f)$, hence homogeneity extends to
 negative numbers: $M(cf)=cM(f)$ for all real $c$.

 If $f,g$ are medial, then
  \[ \Mf(f+g) \leq \Mf(f) + \Mf(g) = \Ma(f) + \Ma(g) \leq \Ma(f+g) ,\]
hence mediality of the sum and additivity of the mean follow.

Properties (a)-(d) immediately follow from the definition.
     \end{proof}

$L$ is typically a small subspace (later we shall describe the smallest), but it can be the whole space.

\begin{Def}
  We say that a functional $M$, defined on a subspace $L$ of bounded functions is a \emph{mean},
if there is an upper mean $\Mf$ with conjucate lower mean $\Ma$ such that $\Ma(f)=\Mf(f)$ holds
exactly for $f\in L$ and then this value is $M(f)$.
\end{Def}

\begin{Def}
  We call a mean \emph{full}, if every bounded function is medial.
\end{Def}

\begin{Th} \label{full}
  Given an upper mean $\Mf$ with conjugate lower mean $\Ma$, there is a full mean $M_1$ satisfying $\Ma(f)\leq M_1(f) \leq \Mf (f)$ for all $f$.
(Consequently $M_1$ is an extension of $M$).
\end{Th}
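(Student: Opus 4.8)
The plan is to realize $M_1$ as a translation-invariant \emph{linear} functional squeezed between $\underline M$ and $\overline M$; once such a functional is produced, verifying that it is a full mean extending $M$ is routine. First note that $\overline M$ is a sublinear functional on the space $\ell^\infty(G)$ of bounded real functions on $G$: subadditivity is property (e), positive homogeneity is (c), and $\overline M(0)=0$ by (a). Hence the Hahn--Banach theorem provides a linear functional $\Lambda_0$ on $\ell^\infty(G)$ with $\Lambda_0\le\overline M$ pointwise; applying this to $-f$ gives $\Lambda_0(f)\ge-\overline M(-f)=\underline M(f)$, so $\underline M\le\Lambda_0\le\overline M$. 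Therefore the set
\[ K=\{\Lambda\in\ell^\infty(G)^*:\ \underline M\le\Lambda\le\overline M\} \]
is nonempty; it is convex, and since $|\Lambda(f)|\le\max\{\overline M(f),\overline M(-f)\}\le\sup|f|$ for $\Lambda\in K$, it is a weak-$*$ closed subset of the unit ball of $\ell^\infty(G)^*$, hence weak-$*$ compact by the Banach--Alaoglu theorem.

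Next I let $G$ act on $K$ by translations: for $t\in G$ put $(S_t\Lambda)(f)=\Lambda(f(\cdot+t))$. Each $S_t$ is an affine, weak-$*$ continuous self-map of $\ell^\infty(G)^*$, and it carries $K$ into $K$ because $\overline M$ and $\underline M$ are translation invariant --- which follows from restricted additivity, e.g.\ from Theorem~\ref{genrest} with the one-term sum $c_1=1$, $t_1=t$ --- so that $(S_t\Lambda)(f)\le\overline M(f(\cdot+t))=\overline M(f)$ and likewise $(S_t\Lambda)(f)\ge\underline M(f)$. Since $G$ is commutative the family $\{S_t:t\in G\}$ commutes, so the Markov--Kakutani fixed point theorem yields a common fixed point $M_1\in K$. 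By construction $M_1$ is linear, satisfies $\underline M\le M_1\le\overline M$, and is translation invariant: $M_1(f(\cdot+t))=M_1(f)$ for all $f$ and $t$.

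Finally I check that $M_1$ is a full mean extending $M$. Linearity makes (c) hold for all scalars and (e) an equality. Norming (a): since $\overline M(c)=c$ and $\underline M(c)=-\overline M(-c)=c$, the sandwich forces $M_1(c)=c$. Monotonicity (b): if $f\le g$, then $f-g\le0$ gives $\underline M(g-f)=-\overline M(f-g)\ge0$, so $M_1(g)-M_1(f)=M_1(g-f)\ge\underline M(g-f)\ge0$. Restricted additivity (d): if $g(x)=f(x)+f(x+t)$, then $M_1(g)=M_1(f)+M_1(f(\cdot+t))=2M_1(f)$ by linearity and translation invariance. Thus $M_1$ is an upper mean; being linear it coincides with its own conjugate lower mean, so $\overline{M_1}=\underline{M_1}=M_1$ on all of $\ell^\infty(G)$, i.e.\ $M_1$ is a full mean. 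And if $f$ is medial for the original pair, then $\underline M(f)=\overline M(f)=M(f)$, so $\underline M(f)\le M_1(f)\le\overline M(f)$ forces $M_1(f)=M(f)$; hence $M_1$ extends $M$.

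The only substantial point --- the main obstacle --- is the construction in the second paragraph: an arbitrary Hahn--Banach extension of $\overline M$ need not be translation invariant, and it is precisely the amenability of the commutative group $G$, used here via Markov--Kakutani on the weak-$*$ compact convex set $K$, that allows one to average out the translation dependence while remaining inside the band between $\underline M$ and $\overline M$.
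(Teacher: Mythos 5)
Your proof is correct, but it takes a genuinely different route from the paper's. The paper applies Hahn--Banach not to an arbitrary functional dominated by $\overline M$, but to the mean $M$ itself, extending it from the subspace $L$ of medial functions to all of $\ell^\infty(G)$ subject to $M_1\le\overline M$. The point you identify as the ``main obstacle'' --- that a Hahn--Banach extension need not respect translations --- is then dissolved without any averaging: by Theorem~\ref{medials}(d) the difference functions $f(x+t)-f(x)$ already lie in $L$ with mean $0$, so \emph{every} linear extension of $M$ automatically satisfies restricted subtractivity (and hence, by linearity, full translation invariance). Monotonicity and norming follow from $M_1\le\overline M$ exactly as in your last paragraph, and ``extension of $M$'' is true by construction rather than by the sandwich argument. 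Your construction --- Hahn--Banach to get one point of the band $K=\{\underline M\le\Lambda\le\overline M\}$, then Markov--Kakutani on the weak-$*$ compact convex set $K$ to restore invariance --- is a standard invariant-mean (amenability) argument and is valid as written; the verification that the fixed point is a full mean extending $M$ is also correct. What the paper's route buys is economy: no fixed-point theorem and no appeal to commutativity beyond what is already packaged into Theorem~\ref{medials}. What your route buys is robustness: it does not need the prior structural result that $L$ is a translation-invariant subspace containing the difference functions, only that $\overline M$ is sublinear, translation invariant, and normed.
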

\begin{proof}
  As $M \leq \Mf$ and $\Mf$ is subadditive, the Hahn-Banach theorem yields the existence of a linear extension $M_1$
of $M$ satisfying $M_1 \leq \Mf$. To see that it is a mean, that is, both and upper and lower mean, note that
homogeneity, sub- and superadditivity are included in linearity. Norming and restricted substractivity are
consequences of being an extension of $M$; for the latter note that all difference functions $f(x)-f(x+t)$
are medial with mean 0. To
show monotonicity, observe that if $f\leq g$, then
 \[ M_1(f)-M_1(g) = M_1(f-g) \leq \Mf(f-g) \leq 0 . \]
\end{proof}

\begin{Th}\label{meancriteria}
  Let $M$ be a linear functional, defined on a linear subspace $L$ of bounded functions. $M$ is a mean
if and only if it satisfies (a)-(d) of Theorem \ref{medials}.
\end{Th}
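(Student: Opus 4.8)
The forward implication is already recorded: if $M$ is a mean, then by definition $L$ is the space of medial functions of some conjugate pair $\Mf,\Ma$ with $M$ the corresponding mean on $L$, and Theorem~\ref{medials} asserts precisely that such an $M$ is linear on the subspace $L$ and enjoys (a)--(d). So the content is the reverse implication. Assume, then, that $M$ is linear on a subspace $L$ and satisfies (a)--(d) of Theorem~\ref{medials}. The plan is to manufacture a conjugate pair directly from $M$ by the ``envelope'' formulas
\[ \Mf(h)=\inf\{M(g):g\in L,\ g\geq h\},\qquad \Ma(h)=\sup\{M(g):g\in L,\ g\leq h\}. \]
Since constants lie in $L$ and $M$ is monotone on $L$, both quantities are finite, with $-\|h\|_\infty\leq\Ma(h)\leq\Mf(h)\leq\|h\|_\infty$, and $\Ma(h)=-\Mf(-h)$, so once $\Mf$ is shown to be an upper mean, $(\Mf,\Ma)$ is a conjugate pair.

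Next I would verify that $\Mf$ is an upper mean, checking criteria (a), (b'), (c), (d'), (e) as noted after Theorem~\ref{genrest}. Norming (constants belong to $L$ with $M(c)=c$), homogeneity and subadditivity drop out at once from $L$ being a linear subspace and $M$ linear on it, and negativity is immediate from the definition. The one genuine point is restricted additivity. Here I would first observe that property (d) forces $L$ to be translation invariant, and $M$ translation invariant on it, via $g(x+t)=g(x)+\bigl(g(x+t)-g(x)\bigr)$ with the second summand a mean-zero medial difference function. Consequently, for $h(x)=f(x)+f(x+t)$ and any $g\in L$ with $g\geq f$, the function $g(x)+g(x+t)$ lies in $L$, dominates $h$ pointwise, and has $M$-value $2M(g)$, so $\Mf(h)\leq 2\Mf(f)$. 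For the reverse inequality, the difference function $\varphi(x)=f(x)-f(x+t)$ is, by property (d), in $L$ with $M(\varphi)=0$, whence $\Mf(\varphi)=0$ (take $g=\varphi$ for ``$\leq$'', use monotonicity of $M$ for ``$\geq$''); since $2f=h+\varphi$, subadditivity and homogeneity of $\Mf$ give $2\Mf(f)=\Mf(2f)\leq\Mf(h)+\Mf(\varphi)=\Mf(h)$. This establishes (d) (equivalently, it suffices to record $\Mf(\varphi)=0$, i.e. (d'), and invoke the Statement that (d') replaces (d)).

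It remains to identify the medial functions of $(\Mf,\Ma)$ with $L$. If $f\in L$, then taking $g=f$ in both envelopes and using monotonicity of $M$ on $L$ yields $\Ma(f)=\Mf(f)=M(f)$, so each $f\in L$ is medial with mean $M(f)$. Conversely, if $\Ma(f)=\Mf(f)$ for a function $f$, then for every $\varepsilon>0$ the defining infimum and supremum provide $\underline g,\overline g\in L$ with $\underline g\leq f\leq\overline g$ and $M(\overline g)-M(\underline g)<\varepsilon$; Closedness (property (c)) then forces $f$ to be medial, i.e. $f\in L$. Hence the medial set is exactly $L$ and the associated mean there is $M$, so $M$ is a mean, which completes the argument.

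The step I expect to be the real obstacle is restricted additivity of the envelope $\Mf$: a bare pointwise infimum has no evident reason to behave well under averaging over translates, and the way around it is to route everything through property (d) of Theorem~\ref{medials} --- translation invariance of $L$ for the trivial inequality and the algebraic identity $2f=h+\varphi$ with $\varphi$ a mean-zero medial difference function for the other. Everything else is routine bookkeeping with the linear-subspace structure and the linearity and monotonicity of $M$ on $L$.
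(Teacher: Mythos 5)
Your proposal is correct and follows essentially the same route as the paper: the paper also proves this theorem by forming the envelopes $M^e(f)=\inf_{g\in L,\,g\geq f}M(g)$ and $M_e$, verifying that the only nontrivial upper-mean axiom is restricted additivity (handled, as you do, via property (d) and the identity $2f(x)=\bigl(f(x)+f(x+t)\bigr)+\bigl(f(x)-f(x+t)\bigr)$), and then using closedness (c) to show the medial functions are exactly $L$. The only cosmetic difference is that the paper bounds $2f$ directly by the medial function $h(x)+\bigl(f(x)-f(x+t)\bigr)$ rather than invoking subadditivity of the envelope together with $\Mf(\varphi)=0$, and it dismisses the inequality $M^e(g)\leq 2M^e(f)$ as obvious where you spell out the translation-invariance argument.
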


This will be proved together with the next theorem.

       Given a mean, the upper mean it came from is not unique, but we can find the largest upper mean
and the smallest lower mean among the possible choices.

 \begin{Def}
   Given a mean $M$ on a subspace $L$ of medial functions,, its \emph{upper and lower envelope} are
    \[ M^e(f) = \inf_{g\in L, g \geq f} M(g), \ M_e(f) = \sup_{g\in Lg\leq f} M(g) , \]
    over medial $g$.
 \end{Def}

 \begin{Th}
The upper and lower envelope of a mean $M$ are upper and lower means, resp., which induce the  mean $M$.
 \end{Th}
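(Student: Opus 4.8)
The plan is to verify, directly from the defining formulas, that $M^e$ is an upper mean, that $M_e$ is its conjugate lower mean, and that the set on which $M^e=M_e$ is \emph{exactly} $L$, with common value $M$ there; by the definition of an induced mean this is precisely the assertion, and the same computation will also prove Theorem~\ref{meancriteria}. Throughout I would use that $M$, being a mean, is linear on $L$ and satisfies properties (a)--(d) of Theorem~\ref{medials}. One preliminary remark on finiteness: if $|f|\le C$ then the constants $-C,C$ lie in $L$ and bracket $f$, so $-C\le M_e(f)\le M^e(f)\le C$, and both envelopes are genuine real-valued functionals; the two extremal problems are also over nonempty sets.

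Next I would check that $M^e$ is an upper mean. Norming, homogeneity, monotonicity and subadditivity are immediate from $M^e(f)=\inf\{M(g):g\in L,\ g\ge f\}$ together with linearity of $M$ and the fact that $L$ is a translation-closed subspace (larger $f$ has fewer medial majorants; a majorant of $f_1$ plus a majorant of $f_2$ majorizes $f_1+f_2$; rescaling a majorant by $c>0$). The only substantive point is restricted additivity: for $g(x)=f(x)+f(x+t)$ one must show $M^e(g)=2M^e(f)$. The inequality $M^e(g)\le 2M^e(f)$ is easy: if $h\in L$, $h\ge f$, then $h(x)+h(x+t)\ge g(x)$, and by Theorem~\ref{medials}(d) the function $h(x)+h(x+t)=2h(x)+\bigl(h(x+t)-h(x)\bigr)$ is medial with mean $2M(h)$; take the infimum over $h$.

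The opposite inequality $M^e(g)\ge 2M^e(f)$ is the heart of the matter. Fix a medial $k\ge g$ and $N\ge1$. Telescoping $k(x+jt)\ge f(x+jt)+f(x+(j+1)t)$ over $j=0,\dots,N-1$ gives
\[
\sum_{j=0}^{N-1}k(x+jt)\ \ge\ 2\sum_{j=0}^{N-1}f(x+jt)\ -\ f(x)+f(x+Nt)\ \ge\ 2\sum_{j=0}^{N-1}f(x+jt)-2C,\qquad C:=\sup|f|.
\]
Writing $F_N,K_N$ for the $t$-averages $\tfrac1N\sum_{j=0}^{N-1}f(\cdot+jt)$ and $\tfrac1N\sum_{j=0}^{N-1}k(\cdot+jt)$, this is $F_N\le\tfrac12K_N+C/N$. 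Now $K_N$ is medial with $M(K_N)=M(k)$, since $K_N-k$ is an average of difference functions, each medial with mean $0$ by Theorem~\ref{medials}(d); and $f-F_N$ is, for the same reason, medial with mean $0$. Hence $h_N:=\tfrac12K_N+C/N+(f-F_N)$ is a medial function with $h_N\ge f$ and $M(h_N)=\tfrac12M(k)+C/N$, so $M^e(f)\le\tfrac12M(k)+C/N$; letting $N\to\infty$ and then taking the infimum over medial $k\ge g$ gives $M^e(f)\le\tfrac12M^e(g)$. I expect this averaging step — in particular the observation that $f$ and its $t$-average differ by a medial function of mean zero — to be the main obstacle; everything else is bookkeeping.

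Finally, conjugacy and the coincidence set. Since $L$ is a subspace and $M$ is linear, $-M^e(-f)=\sup\{M(g):g\in L,\ g\le f\}=M_e(f)$, so $M_e$ is the conjugate lower mean of $M^e$, hence a lower mean. For $f\in L$, taking $g=f$ in the two extremal problems and using monotonicity of $M$ on $L$ gives $M^e(f)=M_e(f)=M(f)$. Conversely, if $M^e(f)=M_e(f)$, then for each $\varepsilon>0$ one can choose medial $\underline g\le f\le\overline g$ with $M(\overline g)-M(\underline g)<\varepsilon$, so $f$ is medial by Closedness (Theorem~\ref{medials}(c)), i.e.\ $f\in L$. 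Thus $M^e$ and $M_e$ form a conjugate pair of an upper and a lower mean whose equality set is exactly $L$, with common value $M$ there — that is, they induce $M$ — and this argument simultaneously establishes Theorem~\ref{meancriteria}.
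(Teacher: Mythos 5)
Your proof is correct, and its overall architecture --- verify the routine axioms straight from the formula for $M^e$, isolate restricted additivity as the only substantive point, identify $M_e(f)=-M^e(-f)$ as the conjugate lower mean, and use the closedness property to show the coincidence set is exactly $L$ --- matches the paper's. The one place you take a genuinely different route is the hard inequality $2M^e(f)\le M^e(g)$ for $g(x)=f(x)+f(x+t)$. The paper disposes of it in a single rearrangement: if $h\in L$ and $h\ge g$, then $h(x)\ge f(x)+f(x+t)$ gives $2f(x)\le h(x)+\bigl(f(x)-f(x+t)\bigr)$, and by Theorem~\ref{medials}(d) the right-hand side is a medial majorant of $2f$ with mean $M(h)$, so $2M^e(f)\le M(h)$ at once, with no limit to take. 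Your telescoping over $N$ translates --- with the mean-zero medial corrections $K_N-k$ and $f-F_N$ and the $C/N$ error term --- rests on exactly the same lemma (difference functions are medial with mean zero) but reaches the conclusion only after letting $N\to\infty$; it is correct, just more laborious, essentially an averaged, unrolled version of the paper's one-step trick. The paper's version buys brevity; yours would only pay off in a setting where the majorant inequality held merely up to a bounded error, which is not needed here.
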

 \begin{proof}
We show that for any functional $M$ satisfying conditions (a)-(d) of Theorem \ref{meancriteria}
the functional $M^e$ defined above is an upper mean. The only nontrivial property is restricted additivity.
Let $g(x)=f(x)+f(x+t)$, and take a medial $h$ such that $g\leq h$, $M(h) < M^e(g) + \varepsilon$. Then
 \[ 2f(x) \leq h(x) + \bigl(f(x)-f(x-t) \bigr) ,\]
hence $2M^e(f) \leq M(h)  < M^e(g) + \varepsilon$. As we can do this for every positive $\varepsilon$, we conclude
$M^e(g) \geq 2 M^e(f)$. The converse inequality is obvious.

Clearly $M^e$ and $M_e$ both extend $M$. Conversely, if $M^e(f)=M_e(f)$, then $f$ is medial by property (c).
 \end{proof}

On the other hand, there is no smallest upper and largest lower mean for a given mean $M$. Indeed, if $M_1$ is a full
extension of $M$, then for any $c\in(0,1)$ the upper mean $\Mf_2 (f) = c\Mf (f) + (1-c) M_1(f)$ is smaller than $\Mf$.

\begin{Rem}
  The upper envelope of the asymptotic mean is not the asymptotic upper mean but
   \[  M^ef(f) = \lim_{\varepsilon\to0} \limsup_{x\to\infty} \frac{1}{\varepsilon x} \sum_{x<n<(1+\varepsilon)x} f(n) . \]
\end{Rem}

This was also observed by P\'olya\cite{polya29}.

\section{Commarginality}

We try to describe those classes of functions on which all means coincide.
To this end we introduce the concept of commarginality.

\begin{Def}
  Let $f,f'$ be bounded functions on $G$. We say that they are \emph{commarginal},
and write $f\sim f'$, if there
exists a bounded function $F$ on $G^2$ and a finite set $B\subset G$ such that $F(x,y)\neq0$ implies
$x-y\in B$, and we have always
        \[ \sum_y F(x,y)= f(x), \ \sum_x F(x,y)=f'(y). \]
        (Note that the previous condition implies that the sums are finite. Intuitive meaning:
cut each $f(x)$ into finitely many parts, move them by bounded distances and reassemble to get $f'$.)
\end{Def}

\begin{Th}
  Commarginality is an equivalence relation, and it is compatible with linear operations;
that is,

        (a) $f \sim f$,

        (b) if $f\sim f'$, then $f'\sim f$;

        (c) if $f\sim f'$ and $f'\sim f''$, then $f\sim f''$;

        (d) if $f\sim f'$, then $cf\sim cf'$ for every constant $c$;

        (e) if $f\sim f'$ and $g\sim g'$, then $f+g\sim f'+g'$.
\end{Th}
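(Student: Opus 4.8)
The plan is to verify each of (a)--(e) by exhibiting an explicit witnessing function on $G^2$, reading off its two marginals, checking that the associated difference set stays finite, and bounding its supremum norm. Four of the five items are pure bookkeeping and only transitivity (c) needs an idea.

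For (a) I would take $F(x,y)=f(x)$ when $x=y$ and $0$ otherwise, with difference set $\{0\}$. For (b), given a witness $F$ for $f\sim f'$ with difference set $B$, the transpose $F'(x,y):=F(y,x)$ witnesses $f'\sim f$ with difference set $-B$: the two marginal identities simply swap, the difference set stays finite, and $\|F'\|_\infty=\|F\|_\infty$. For (d), if $F$ witnesses $f\sim f'$ then $cF$ witnesses $cf\sim cf'$ with the same difference set. For (e), if $F$ witnesses $f\sim f'$ (difference set $B$) and $H$ witnesses $g\sim g'$ (difference set $C$), then the pointwise sum $F+H$ witnesses $f+g\sim f'+g'$ with difference set $B\cup C$ and $\|F+H\|_\infty\le\|F\|_\infty+\|H\|_\infty$. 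None of this uses that $G$ is infinite, and commutativity enters only through the additive notation $x-y$.

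The one substantive step is transitivity, and the hard part is resisting the reflex to ``compose the transport plans like matrices''. If $F$ witnesses $f\sim f'$ (difference set $B$) and $H$ witnesses $f'\sim f''$ (difference set $C$), then the naive formula $K(x,z)=\sum_y F(x,y)H(y,z)/f'(y)$ divides by $f'(y)$ exactly where $f'(y)=0$ — and such a $y$ can carry nonzero, cancelling mass both in the column $F(\cdot,y)$ and in the row $H(y,\cdot)$, so those terms cannot simply be dropped; worse, even off the zero set the quotient need not be bounded. The clean substitute is that the honest composition is additive up to a diagonal correction:
\[ K(x,z):=F(x,z)+H(x,z)-f'(x)\,\mathbf{1}_{x=z}. \]
Summing over $z$ gives $f(x)+f'(x)-f'(x)=f(x)$ (here $\sum_z H(x,z)=f'(x)$ is the row-sum condition for $H$ read at $y=x$); summing over $x$ gives $f'(z)+f''(z)-f'(z)=f''(z)$; the support of $K$ lies in $\{x-z\in B\cup C\cup\{0\}\}$; and $\|K\|_\infty\le\|F\|_\infty+\|H\|_\infty+\|f'\|_\infty<\infty$. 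So $K$ witnesses $f\sim f''$. Equivalently, $H(x,z)-f'(x)\mathbf{1}_{x=z}$ has all row sums $0$ and all column sums $f''-f'$, hence witnesses $0\sim f''-f'$, and then (e) applied to $f\sim f'$ and $0\sim f''-f'$ yields $f=f+0\sim f'+(f''-f')=f''$; thus (c) even follows formally from (a), (d), (e). Either way, once the additive-plus-diagonal formula (or the reduction via $0\sim f''-f'$) is spotted, every verification is a one-line check of marginals, difference set, and boundedness.
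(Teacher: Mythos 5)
Your proof is correct, and on the one substantive point, transitivity, it takes a genuinely different route from the paper's. The paper uses exactly the composition formula you warn against, $F''(x,z)=\sum_y F(x,y)F'(y,z)/f'(y)$; it deals with the division problem not by discarding terms but by first treating the case where $f'$ is bounded away from $0$ and then reducing the general case to it by passing to $f+c$, $f'+c$, $f''+c$ with $c>\sup|f'|$ and using compatibility with linear operations. Your additive witness $K(x,z)=F(x,z)+H(x,z)-f'(x)\,\mathbf{1}_{\{x=z\}}$ bypasses the case split altogether: the marginals, the support condition $x-z\in B\cup C\cup\{0\}$, and the bound $\|K\|_\infty\le\|F\|_\infty+\|H\|_\infty+\|f'\|_\infty$ are all immediate. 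The reformulation you extract from it --- that $f\sim f'$ is equivalent to $f-f'\sim 0$, so transitivity is just closure under addition of the class of functions commarginal to $0$ --- is a clean structural observation the paper does not state. Both arguments are sound; yours is shorter, and it also transfers with no extra work to the $l^1$-commarginal version in Theorem \ref{limcomm} (take $h_1+h_2+(\sup|f'|+1)\mathbf{1}_{\{0\}}$ as the dominating $l^1$ function), where the paper again appeals to the composition argument. The remaining items (a), (b), (d), (e) are handled identically in both treatments: the paper declares them immediate from the definition, and your explicit witnesses are precisely the ones intended.
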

\begin{proof}
  All save transitivity (c) immediately follow from the definition. To prove transitivity assume first that $f'$
is bounded away from 0, say $|f'(x)| > c >0$ . The functions $f, f'$ are marginals of some $F$ restricted by
a finite set $B$; $f', f''$ of $F'$, $B'$. Now define
 \[ F"(x,z) = \sum_y \frac{ F(x,y) F'(y,z)}{f'(y)} . \]
This is a finite sum, it is bounded by our assumptions, and if the sum is nonempty, then we have
$x-y\in B$ and $y-z\in B'$ for some $y$, whence $x-z\in B''=B+B'$.
By summing first over $z$ we see that one marginal of $F"$ is indeed $f$, and by summing over $x$ we see that
the other one is $f"$.

To get rid of the assuption choose a number $c > \sup |f'|$, apply the previous case for the functions
$f+c, f'+c, f"+c$ and use linearity.
\end{proof}

\begin{Th}\label{commcomp}
        Commarginality is compatible with any upper or lower mean; that is, if
$f, f'$ are commarginal functions and  $\overline M$ is any upper mean, then
        $\overline M(f) =\overline M(f').$
\end{Th}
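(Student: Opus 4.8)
The plan is to reduce the statement to General Restricted Additivity (Theorem~\ref{genrest}). Suppose $f\sim f'$, witnessed by a bounded function $F$ on $G^2$ and a finite set $B=\{b_1,\dots,b_m\}\subset G$ with $F(x,y)\neq 0\Rightarrow x-y\in B$, so that $\sum_y F(x,y)=f(x)$ and $\sum_x F(x,y)=f'(y)$. Writing $y=x-b$ for $b\in B$, introduce the auxiliary functions $h_j(x)=F(x,\,x-b_j)$ for $j=1,\dots,m$. Each $h_j$ is bounded (since $F$ is), and the first marginal condition reads $f(x)=\sum_{j=1}^m h_j(x)$. The second marginal condition, read at the point $y$ and reindexed by $x=y+b_j$, gives $f'(y)=\sum_{j=1}^m F(y+b_j,\,y)=\sum_{j=1}^m h_j(y+b_j)$; that is, $f'(x)=\sum_{j=1}^m h_j(x+b_j)$.

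Now I would evaluate $\Mf$ on both expressions. By finite subadditivity, $\Mf(f)=\Mf\bigl(\sum_j h_j\bigr)\le \sum_j \Mf(h_j)$, and by finite superadditivity of the conjugate lower mean (or directly, once we control each $h_j$) we get matching bounds — but the clean route is to apply Theorem~\ref{genrest} termwise. For each fixed $j$, the single-term ``sum'' $x\mapsto h_j(x+b_j)$ is a translate of $h_j$, so translation invariance (which the excerpt notes follows from the axioms, and is the $k=1$, $c_1=1$ case of Theorem~\ref{genrest}) gives $\Mf\bigl(h_j(\cdot+b_j)\bigr)=\Mf(h_j)$. It remains to pass from $\Mf(f)\le\sum_j\Mf(h_j)$ and $\Mf(f')\le\sum_j\Mf(h_j)$ to an \emph{equality} $\Mf(f)=\Mf(f')$; subadditivity alone only yields inequalities in one direction, so I need more.

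The fix is to use the symmetry of commarginality together with additivity on medial pieces. Apply the difference trick: $f'(x)-f(x)=\sum_{j=1}^m\bigl(h_j(x+b_j)-h_j(x)\bigr)$. Each summand $h_j(x+b_j)-h_j(x)$ is a difference function, hence by restricted subtractivity (property (d$'$), equivalently by Theorem~\ref{genrest} with coefficients $1,-1$ summing to $0$ — note $\sum c_i\ge 0$ is satisfied with equality) we have $\Mf\bigl(h_j(\cdot+b_j)-h_j\bigr)=0$. Summing over the finitely many $j$ and using subadditivity, $\Mf(f'-f)\le \sum_j \Mf\bigl(h_j(\cdot+b_j)-h_j\bigr)=0$; symmetrically, swapping the roles of $f$ and $f'$ (using that $\sim$ is symmetric, or just that $f-f'=\sum_j(h_j(x)-h_j(x+b_j))$ is again a sum of difference functions), $\Mf(f-f')\le 0$. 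Then from $\Mf(f)\le\Mf(f')+\Mf(f-f')\le\Mf(f')$ and the reverse, we conclude $\Mf(f)=\Mf(f')$. The lower-mean case is identical with inequalities reversed, or follows by conjugation $\Ma(f)=-\Mf(-f)$ together with the already-proved upper-mean statement applied to $-f\sim -f'$ (property (d) of commarginality with $c=-1$).

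The main obstacle is precisely the step just described: subadditivity is one-directional, so one cannot simply chain $\Mf(f)\le\sum\Mf(h_j)\le\Mf(f')$. Recognizing that $f'-f$ decomposes as a finite sum of \emph{difference} functions $h_j(\cdot+b_j)-h_j$ — each killed by restricted additivity — is what makes the argument close; everything else (boundedness of the $h_j$, finiteness of the sums, reindexing via $y=x-b_j$) is bookkeeping guaranteed by the definition of commarginality.
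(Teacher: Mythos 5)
Your proof is correct, and it takes a genuinely different (and shorter) route than the paper's. The paper reduces to the case $F>0$, invokes the F\o{}lner-type Lemma~\ref{aligno} to choose a finite $C$ with $|B+C|/|C|<1+\varepsilon$, compares the averaged functions $g(x)=\sum_{t\in C}f(x-t)$ and $g'(x)=\sum_{t\in B+C}f'(x-t)$ pointwise, and extracts $\Mf(f)\le(1+\varepsilon)\Mf(f')$ from restricted additivity before letting $\varepsilon\to0$ and symmetrizing. You instead slice $F$ along the diagonals $y=x-b_j$, obtaining $f=\sum_j h_j$ and $f'=\sum_j h_j(\cdot+b_j)$ with finitely many bounded pieces $h_j(x)=F(x,x-b_j)$, so that $f'-f$ is a finite sum of difference functions; restricted subtractivity (the $\sum c_i=0$ case of Theorem~\ref{genrest}) kills each summand, and subadditivity then gives $\Mf(f'-f)\le0$ and $\Mf(f-f')\le0$, which forces $\Mf(f)=\Mf(f')$; the lower-mean case follows by conjugation. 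Your route avoids both Lemma~\ref{aligno} and the paper's positivity reduction, and it actually establishes the stronger structural fact that $f-f'$ is a finite sum of difference functions, hence absolutely medial with $M^*(f-f')=M_*(f-f')=0$ --- exactly condition (c) of Theorem~\ref{osszeno2}. What the paper's averaging argument buys is a template that recurs later (in Theorem~\ref{umm} and the witness theorems) and that does not depend on the finiteness of the diagonal decomposition; for $l^1$-commarginality, for instance, your argument would still need the truncation to finite sets $B_n$ that the paper performs before reducing to the limit-commarginal case.
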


  \begin{Lemma}\label{aligno}
    For every finite $B\subset G$ and $\varepsilon >0$ there is a finite $C\subset G$ such that $ |B+C| / |C| < 1+\varepsilon$.
  \end{Lemma}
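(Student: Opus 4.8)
The statement is a Følner-type condition: for every finite $B$ and $\varepsilon>0$ we want a finite "almost invariant" set $C$ with $|B+C|/|C| < 1+\varepsilon$. My plan is to build $C$ as a large box (product of long arithmetic progressions) in the subgroup generated by $B$, exploiting commutativity to reduce to a finite-rank situation.

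First I would note it suffices to treat the case where $B$ is symmetric and contains $0$ (enlarging $B$ only makes the claim harder), and where $B = \{0, \pm b_1, \ldots, \pm b_r\}$ for finitely many generators $b_1, \ldots, b_r$. Then for a parameter $n$ set
\[
C = C_n = \Bigl\{ \sum_{i=1}^r j_i b_i : 0 \le j_i < n \Bigr\}.
\]
This is the image of the box $\{0,\ldots,n-1\}^r$ under the homomorphism $\varphi:\setZ^r \to G$, $\varphi(j) = \sum j_i b_i$. We have $B + C_n \subseteq \varphi(\{-1,0,\ldots,n\}^r)$, a box of side $n+2$. The crude bound $|B+C_n| \le (n+2)^r$ combined with a lower bound on $|C_n|$ is what I need; the ratio of box sizes $(n+2)^r/n^r \to 1$ as $n\to\infty$, so the only danger is that $|C_n|$ is much smaller than $n^r$ because $\varphi$ collapses the box (torsion, or relations among the $b_i$).

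The key step is therefore the lower bound $|C_n| \ge c\,(n+2)^r$ for some constant $c>0$ independent of $n$, or more precisely that $|B+C_n|/|C_n|$ stays close to $1$. To handle the collapsing, I would decompose $\setZ^r = \ker\varphi \oplus$ (a complement is not available over $\setZ$, but) use the structure theorem for finitely generated abelian groups: $\im\varphi \cong \setZ^s \oplus T$ with $T$ finite, and choose a transversal. Equivalently, and more cleanly: observe that the box $\{0,\ldots,n-1\}^r$ in $\setZ^r$ is itself a Følner-type set, $|\{-1,\ldots,n\}^r| / |\{0,\ldots,n-1\}^r| \to 1$, and a surjective homomorphism image of a box with bounded fibers preserves this. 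If $\varphi$ restricted to the box were at most $N$-to-one we would be done immediately; in general the fiber sizes are controlled by the lattice $\ker\varphi \cap (\text{difference set of the box})$, and a standard counting argument (the number of lattice points of a fixed sublattice in a box of side $n$ is asymptotic to $n^s/\det$ for the appropriate $s$) gives $|C_n| \sim c n^s$ and $|B+C_n| \sim c n^s$ with the same $c$, hence the ratio tends to $1$.

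The main obstacle is precisely this bookkeeping with $\ker\varphi$: making rigorous that both $|C_n|$ and $|B+C_n|$ have the same leading-order asymptotics $c n^s$. I expect the cleanest route is to pick coset representatives of $\ker\varphi$ inside $\setZ^r$ lying in a bounded-width slab, reducing to counting lattice points of $\setZ^s$ (free part) in a box, where the Følner property is elementary. Alternatively, one can avoid asymptotics entirely: apply the already-proved theory — if $G$ is finitely generated the result is classical (groups of polynomial growth are amenable with explicit Følner sets), and a general $G$ is a direct limit of finitely generated subgroups, so it is enough to find $C$ inside the subgroup $\langle B\rangle$, which is finitely generated. This last reduction is, I think, the shortest honest proof.
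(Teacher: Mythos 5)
Your concluding reduction is the correct one, and it is essentially the paper's own proof: the paper in fact proves a stronger statement (Lemma \ref{joparketta}, where $C$ is moreover a tile), by passing to the subgroup $H=\langle B\rangle$, invoking the structure theorem $H\cong\setZ^d\times F$ with $F$ finite, and taking $C$ to be the preimage of the box $\{1,\dots,n\}^d\times F$, which gives $|B+C|/|C|\le\bigl((n+2m)/n\bigr)^d\to1$ directly, with no fiber counting (here $m$ bounds the coordinates of the image of $B$). Your primary construction --- pushing forward the box $\{0,\dots,n-1\}^r$ under $\varphi(j)=\sum j_ib_i$ --- has exactly the defect you flag: when $\ker\varphi\neq0$ the lower bound $|C_n|\ge c\,(n+2)^r$ is simply false (already $b_1=b_2$ forces $|C_n|=O(n^{r-1})$), and the repair you sketch (matching asymptotics $|C_n|\sim c\,n^s\sim|B+C_n|$ via lattice-point counting against $\ker\varphi$) is genuine work that you do not carry out; as written this branch of the argument is incomplete. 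The clean fix is the one you name last: apply the structure theorem \emph{first}, so the box lives in the coordinates of $\setZ^d\times F$ where the parametrization is bijective. Since $B+C\subset H$ whenever $C\subset H$, nothing further is needed to return to $G$ for this lemma (the coset-representative step in the paper is only needed for the tiling strengthening). So: correct in its final form, identical in substance to the paper, but the first two thirds of the write-up should be discarded in favour of the last four lines.
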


  Easy and well-known. We shall show a stronger result as Lemma \ref{joparketta}.

\begin{proof}[Proof of Theorem \ref{commcomp}]
  Let $F$ and $B$ be the function and the finite set inducing commarginality of $f$ and $f'$.
Assume first that $F>0$. Take an $\varepsilon>0$ and a finite $C\subset G$ such that $ |B+C| / |C| < 1+\varepsilon $.
Put
 \[ g(x) = \sum_{t\in C} f(x-t), \]

 \[ g'(x) = \sum_{t\in B+C} f'(x-t). \]
 We have
  \[ g(x) = \sum_{t\in C} \sum_y F(x-t,y) .\]
  All nonzero terms of this sum satisfy $x-t-y\in B$, hence $y\in x-t-B\subset x-C-B$, so
   \[ g(x) \leq \sum_{y\in x-C-B} \sum_z F(z,y) = g'(x) . \]
   Consequently
    \[ \Mf(g) = |C| \Mf(f) \leq \Mf(g') = |B+C| \Mf(f'),  \]
that is,
     \[  \Mf(f) \leq \frac{ |B+C|}{|C|} \Mf(f') < (1+\varepsilon) \Mf(f').    \]
Similarly we deduce $  \Mf(f') < (1+\varepsilon ) \Mf(f)$,
and as this holds for every positive $\varepsilon$, we infer $  \Mf(f')=  \Mf(f)$.

To get rid of the positivity assumption, we apply the positive case for the functions
 \[ F_1(x)  =        \begin{cases}
          F(x,y)+c & \text{ if } x-y\in B, \\
         0 & \text{ otherwise, }
        \end{cases}   \]
with some $c> - \inf F$, $f_1(x)=f(x)+c |B|$, $g_1(x)=g(x)+c |B|$.
\end{proof}

Some weaker relations still imply coincidence of means.

\begin{Def}
  Let $f,f'$ be bounded functions on $G$. We say that they are \emph{$l^1$-commarginal},
and write $f \coo f'$, if there
exists a bounded function $F$ on $G^2$ and a function $h\in l^1(G)$ such that $|F(x,y)| < h(x-y)$ for all $x,y\in G$ and
        \[ \sum_y F(x,y)= f(x), \ \sum_x F(x,y)=f'(y). \]
\end{Def}

\begin{Def}
  Let $f,f'$ be bounded functions on $G$. We say that they are \emph{limit-commarginal},
and write $f \col f'$, if there exist two sequences $g_n, g_n'$ of bounded functions such that
 $g_n \sim g_n'$, $g_n \to f$ and $g_n' \to f'$  uniformly.
\end{Def}

\begin{St}
  $f \sim f' \to  f \coo f' \to f \col f' $ .
\end{St}

\begin{proof}
  The first implication is obvious. For the second, given the functions $F$ and $h$, take finite sets $B_n$ such that
   \[ \sum_{x \notin B_n} h(x) < 1/n . \]
Put
\[ F_n(x)  =        \begin{cases}
          F(x,y) & \text{ if } x-y \in  B_n, \\
         0 & \text{ otherwise, }
        \end{cases}   \]
\[ \sum_y F_n(x,y)= g_n(x), \ \sum_x F_n(x,y)=g_n'(y). \]
Clearly $g_n \sim g_n'$ and
         \[ |f(x)-g_n(x)| < 1/n, \   |f'(x)-g_n'(x)| < 1/n . \]
\end{proof}

\begin{St}[Asymmetric form of limit-commarginality.]\label{asym}
  If $f \col f'$, there are functions $f_n$ such that  $f_n \sim f$, $f_n \to f'$ uniformly.
\end{St}
\begin{proof}
  If $g_n \sim g_n'$, $g_n \to f$ and $g_n' \to f'$, we can put $f_n = g_n'-g_n+f$.
\end{proof}

\begin{Th}\label{limcomm}
  Both $l^1$-commarginality and limit-commarginality are equivalence relations,  compatible with linear operations
and any upper or lower mean.
\end{Th}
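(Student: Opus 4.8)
The plan is to prove Theorem~\ref{limcomm} by leveraging the two preceding Statements together with the already-established properties of ordinary commarginality (the Theorem on equivalence/linearity of $\sim$ and Theorem~\ref{commcomp} on compatibility with means). For $l^1$-commarginality $\coo$ the argument should be a direct transcription of the finite-support proofs, replacing the role of the finite set $B$ by the $l^1$-majorant $h$: reflexivity and symmetry are immediate from the definition; for transitivity one uses the same convolution-type composition $F''(x,z)=\sum_y F(x,y)F'(y,z)/f'(y)$ (after the usual reduction to $f'$ bounded away from $0$ by adding a large constant), noting that if $|F(x,y)|<h(x-y)$ and $|F'(y,z)|<h'(y-z)$ and $|f'(y)|>c$, then $|F''(x,z)|<c^{-1}(h*h')(x-z)$ and $h*h'\in l^1$; compatibility with linear operations is the same bookkeeping as in the finite case, and compatibility with an upper or lower mean follows from Proposition on $f\coo f'\Rightarrow f\col f'$ together with the limit-commarginal case, so it need not be proved separately.

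For limit-commarginality $\col$ the key is Statement~\ref{asym}, the asymmetric form: $f\col f'$ means precisely that there are $f_n\sim f$ with $f_n\to f'$ uniformly. Reflexivity is trivial; symmetry follows because, given $f_n\sim f$ with $f_n\to f'$, one applies $\sim$-symmetry to get $f\sim f_n$ and then, reversing roles via Statement~\ref{asym} again (or directly, since $f-f_n\to 0$ uniformly and $f_n\sim f$), produces $g_n\sim f'$ with $g_n\to f$ uniformly — concretely $g_n = f + (f-f_n)$ works up to a uniform error, or one simply re-symmetrizes the defining sequences $g_n\sim g_n'$. Transitivity: if $f_n\sim f$, $f_n\to f'$ and $f_n'\sim f'$, $f_n'\to f''$, then $f_n + (f_n' - f')$ is commarginal to $f$ (by $\sim$-linearity, since $f_n'-f'$ is commarginal to... ) — more cleanly, pick for each $n$ first $f_n\sim f$ with $\|f_n-f'\|_\infty<1/n$, then $h_n\sim f'$ with $\|h_n - f''\|_\infty < 1/n$; then $f_n + (h_n - f_n^{(2)})$ where we also need $f_n^{(2)}\sim f'$ close to $f'$... this requires a small diagonal argument combining two approximation scales, and then $\sim$-linearity gives a function $\sim f$ within $2/n$ of $f''$. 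Compatibility with linear operations follows from $\sim$-linearity applied termwise to the approximating sequences together with the fact that uniform convergence is preserved under linear combinations.

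Compatibility with an upper mean $\Mf$ is where one uses Theorem~\ref{commcomp} and the easy continuity estimate $|\Mf(f)-\Mf(g)|\le\sup|f-g|$ recalled in the proof of Theorem~\ref{genrest}: if $f\col f'$, take $f_n\sim f$ with $f_n\to f'$ uniformly; then $\Mf(f)=\Mf(f_n)$ by Theorem~\ref{commcomp}, and $|\Mf(f_n)-\Mf(f')|\le\sup|f_n-f'|\to 0$, so $\Mf(f)=\Mf(f')$; the lower-mean case is identical (or follows by conjugation). Since $f\coo f'$ implies $f\col f'$ by the Proposition, the $l^1$ case inherits mean-compatibility at once.

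The main obstacle I anticipate is purely organizational rather than deep: getting transitivity of $\col$ cleanly, because naively composing the two uniform approximations leaves an error term that is $\sim f$ only up to a small uniform perturbation, so one must either fold that perturbation into the approximating sequence (legitimate, since $\sim$ is compatible with adding a uniformly small function — itself a consequence of $\sim$-linearity and the trivial observation that any bounded function of small sup-norm is $l^1$-... no, rather: a uniformly small function need not be commarginal to $0$, so one genuinely must re-extract a new commarginal approximant) or, more robustly, argue directly: from $f_n\sim f$, $f_n\to f'$ and $g_m\sim f'$, $g_m\to f''$, the function $g_m + (f_n - f')$... again leaves $f_n-f'$ floating. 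The clean fix is the diagonal: for each $k$ choose $n$ with $\|f_n-f'\|_\infty<1/(2k)$ and then, since $f_n\sim f$ and commarginality is transitive once we have $f_n\col f''$, reduce instead to showing $\col$ is transitive when the middle relation is exact $\sim$ — and $f'\sim g$ with $g\col f''$ immediately gives $f'\col f''$ with approximants also $\sim f'$ hence, composing the single honest $\sim$ with Statement~\ref{asym}, $\sim f$. I would write this last point as the one careful paragraph of the proof and treat everything else as "as before".
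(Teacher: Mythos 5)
Most of your proposal matches the paper's proof. The $l^1$ case is handled exactly as you say, by repeating the convolution composition with the majorant $c^{-1}(h*h')\in l^1(G)$ (the paper only remarks that it goes "like that of commarginality above"), and compatibility with upper and lower means is the paper's argument verbatim: $\Mf(g_n)=\Mf(g_n')$ from Theorem \ref{commcomp} plus the estimate $|\Mf(g_n)-\Mf(f)|\le\sup|g_n-f|$, with the $\coo$ case inherited from $\coo\Rightarrow\col$.

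The one genuine gap is transitivity of $\col$, precisely the point you flag as the main obstacle; none of your attempted fixes closes it. The function $2f-f_n$ is commarginal to $f$ (not to $f'$) and converges uniformly to $2f-f'$ (not to $f$); and the proposed reduction to "the middle relation is an exact $\sim$" does not go through, because the approximants that Statement \ref{asym} produces for $f\col f'$ are only uniformly close to $f'$, not commarginal to it, so they cannot be fed into the second relation. The paper's resolution is a small but essential reorientation: apply Statement \ref{asym} with the \emph{middle} function $f'$ as the anchor on both sides. From $f'\col f$ (symmetry) one gets $f_n\sim f'$ with $f_n\to f$ uniformly; from $f'\col f''$ one gets $f_n''\sim f'$ with $f_n''\to f''$ uniformly. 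Then $f_n\sim f_n''$ by transitivity of $\sim$, and the pair of sequences $(f_n,f_n'')$ witnesses $f\col f''$ directly from the original symmetric definition; no composition of a commarginality with a uniformly small perturbation is ever required. With that one paragraph rewritten, your proof is complete and coincides with the paper's.
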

\begin{proof} Transitivity of  $l^1$-commarginality can be proved like that of commarginality above. Transitivity of
 limit-commarginality is shown as follows. Assume  $f \col f' \col f"$. By Statement \ref{asym} there are sequences of
functions $f_n \to f$, $f_n \sim f'$ and $f_n" \to f"$, $f_n" \sim f'$; hence $f_n \sim f_n"$
 and these sequences are sufficient using
the original definition. Reflexivity symmetry and compatibility of linear opertions of both relations are obvious.

To  show compatibility of limit-commarginality with upper means take
 sequences $g_n, g_n'$ of bounded functions such that $g_n \sim g_n'$, $g_n  \to f$ and $g_n' \to  f'$
 uniformly. We have $\Mf(g_n)=\Mf(g_n')$,
  \[ | \Mf(g_n) - \Mf(f) | \leq \sup |g_n-f| \to 0, \  | \Mf(g_n') - \Mf(f') | \leq  \sup |g_n'-f'| \to 0 \]
and these relations together yield the conclusion.

The claim for $l^1$-compatibility follows as it is a stronger assumption.
\end{proof}

\begin{Th}\label{osszeno}
  The equality $\Mf (f)=\Mf(g)$ holds for all upper means if and only if $f \col g$.
\end{Th}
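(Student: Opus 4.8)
The plan is to prove the two implications separately. The forward implication, $f\col g\Rightarrow \Mf(f)=\Mf(g)$ for every upper mean $\Mf$, is exactly the compatibility of limit-commarginality with upper means (Theorem~\ref{limcomm}), so all the work lies in the converse, which I would establish by contradiction: assume $\Mf(f)=\Mf(g)$ for every upper mean $\Mf$ but $f\not\col g$.

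Set $u:=f-g$. Since $\col$ is compatible with linear operations (Theorem~\ref{limcomm}), $u\col 0$ would give $f=u+g\col 0+g=g$; hence $u\not\col 0$. Let $D$ denote the closure, in the sup-norm space of bounded functions on $G$, of the linear span of the difference functions $v(x)-v(x+t)$ ($v$ bounded, $t\in G$). Each such difference is commarginal to $0$, as witnessed by the bounded kernel $F(x,y)=v(x)\,\1{y=x}-v(x+t)\,\1{y=x+t}$ on $G^2$, which vanishes unless $x-y\in\{0,-t\}$ and has marginals $\sum_y F(x,y)=v(x)-v(x+t)$, $\sum_x F(x,y)=0$. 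As commarginality is compatible with addition, every element of the span of differences is commarginal to $0$, so every element of $D$ is a uniform limit of functions commarginal to $0$ and hence is limit-commarginal to $0$. Therefore $u\not\col 0$ forces $u\notin D$.

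From $u\notin D$ I would apply the Hahn--Banach theorem to obtain a bounded linear functional $\varphi$ on the space of bounded functions with $\varphi|_D=0$ and $\varphi(u)\neq 0$; vanishing on every difference, $\varphi$ is translation invariant. In the dual Banach lattice of bounded finitely additive set functions on $G$ take the Jordan decomposition $\varphi=\varphi^+-\varphi^-$; since each translation acts on this space as a Riesz automorphism and $\varphi$ is translation invariant, both $\varphi^+$ and $\varphi^-$ are translation-invariant positive functionals, and at least one of them is nonzero. Normalising that one by its value at the constant function $1$ (which equals its norm) produces a translation-invariant state $N$; it satisfies every axiom of an upper mean --- monotonicity from positivity, norming from $N(1)=1$, and restricted additivity together with subadditivity from linearity and translation invariance --- and being linear it has $N(f)-N(g)=N(u)\neq 0$, contradicting the standing assumption.

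The step I expect to be the real obstacle is this passage from ``all upper means agree at $f$ and $g$'' to ``$f-g$ is limit-commarginal to $0$'': since upper means are not linear, one cannot control the value at $f-g$ by the values at $f$ and $g$, so one is forced to restrict to the linear (hence full, i.e.\ invariant) members of the family, and then to recognise that their common kernel is precisely the sup-closed span of difference functions, which lies inside the limit-commarginality class of $0$. The only genuinely technical point along the way is that the Jordan decomposition respects translation invariance, which is what upgrades the Hahn--Banach functional to an honest invariant mean.
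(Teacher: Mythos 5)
Your proof is essentially correct, but it takes a genuinely different route from the paper's. The paper obtains the converse implication from Theorem \ref{absolute} together with Theorem \ref{givenfull}: if $f$ and $g$ are not limit-commarginal, then $u=f-g$ is not absolutely medial with value $0$, so the interval $[M_*(u),M^*(u)]$ contains some $\alpha\neq 0$, and Theorem \ref{givenfull} (resting on the Zorn/Hahn--Banach extension machinery of Theorems \ref{meanextension} and \ref{full}) produces a full mean $M$ with $M(u)=\alpha\neq 0$. You instead separate $u$ from the sup-norm closed span $D$ of the difference functions $v(x)-v(x+t)$, note that a functional annihilating $D$ is translation invariant, and convert it into an invariant mean by the Jordan decomposition in the dual lattice --- the classical amenability construction. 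The two arguments hinge on parallel identifications of the same kernel: the paper proves $u\col 0\iff M_*(u)=M^*(u)=0$ by an explicit convex-combination-of-translates computation, while you use the inclusion $D\subset\{h:h\col 0\}$, correctly justified by your explicit kernel for a single difference plus compatibility of $\sim$ with linear operations and of $\col$ with uniform limits. Your route is shorter and avoids the uppermost/lowest means entirely, at the price of importing the Banach-lattice structure of $\ell^\infty(G)^*$ and the fact that translation adjoints are Riesz automorphisms; the paper's heavier route buys more, namely the full interval statement of Theorem \ref{givenfull} and the four-way equivalence of Theorem \ref{osszeno2}, of which the present theorem is only one face.

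One small repair is needed at the end. After the Jordan decomposition $\varphi=\varphi^{+}-\varphi^{-}$ you normalise ``the nonzero one'' of the two components; this does not guarantee $N(u)\neq 0$, since the component you happen to select could be a nonzero functional that nevertheless vanishes at $u$. You should choose the component that is nonzero \emph{at} $u$: at least one of $\varphi^{+}(u),\varphi^{-}(u)$ is nonzero because their difference equals $\varphi(u)\neq 0$, and that component is then automatically a nonzero positive invariant functional, so its value at the constant $1$ is positive and the normalisation goes through. With this adjustment the contradiction $N(f)\neq N(g)$ is obtained and the proof is complete.
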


This will be proved in Section \ref{prescribed} as a part of Theorem \ref{osszeno2}.

        \section{Uppermost and lowest means}

     For a given function $f$ consider all convex combinations of translates, that is, all
 functions of the form
     \[ g(x)= \sum _{i=1}^n c_i f(x+t_i), \ c_i\geq 0, \ \sum  c_i=1 .   \]
     Put
     \[   M^*(f) = \inf _g \sup _x g(x),  \]
     \[   M_*(f) = \sup _g \inf _x g(x). \]

     \begin{Th} \label{lf}
     $M^*$ is an upper mean, and $M_*$ is the conjugate lower mean. From among all possible upper
 means $M^*$ is maximal, and from among all possible lower means
$M_*$ is minimal.
     \end{Th}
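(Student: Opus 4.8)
The plan is to verify separately that $M^*$ satisfies (a), (b'), (c), (d'), (e), that $M_*$ is its conjugate, and then to prove the extremality statements. The properties (a), (c) are immediate from the definitions: for a constant $f\equiv c$ every convex combination of translates is again $\equiv c$, and scaling $f$ by $c\ge 0$ scales every $g$ by $c$. Negativity (b') is equally direct: if $f\le 0$ then every convex combination $g$ of translates satisfies $g\le 0$, so $\sup_x g(x)\le 0$, whence $M^*(f)\le 0$. For subadditivity (e), given $\varepsilon>0$ pick convex combinations $g$ of translates of $f$ and $h$ of translates of $g$ (different letters needed — say convex combinations for $f$ and $g$ respectively) with $\sup g < M^*(f)+\varepsilon$ and $\sup h < M^*(g)+\varepsilon$; the obstacle is that the two translate-sets need not match, so one forms the ``product'' convex combination $\sum_{i,j} c_id_j\,[f(x+s_i+u_j)+g(x+s_i+u_j)]$, which is simultaneously a convex combination of translates of $f+g$, dominated pointwise by (translate-averaged $f$) $+$ (translate-averaged $g$), each of which has sup close to $M^*(f)$, $M^*(g)$. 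This yields $M^*(f+g)\le M^*(f)+M^*(g)$.

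The crucial axiom is restricted subtractivity (d'): if $h(x)=f(x)-f(x+t)$ then $M^*(h)=0$. One inequality, $M^*(h)\le 0$, comes from the telescoping trick already used in Theorem \ref{genrest}: the convex combination $\frac1N\sum_{j=0}^{N-1} h(x+jt) = \frac1N\bigl(f(x)-f(x+Nt)\bigr)$ has sup at most $\frac{2}{N}\sup|f|\to 0$. For the reverse, $M^*(h)\ge 0$, note $h$ is itself a difference function, so applying the same argument to $-h(x)=f(x+t)-f(x)$ gives $M^*(-h)\le 0$; combined with subadditivity and norming, $0=M^*(0)=M^*(h+(-h))\le M^*(h)+M^*(-h)\le M^*(h)$, hence $M^*(h)\ge 0$. (The same computation, applied to $-h$, also shows $M^*(-h)\le 0\le -M^*(h)$ consistently.) Thus $M^*$ is an upper mean. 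That $M_*$ is the conjugate lower mean is a formal identity: $\inf_x g(x) = -\sup_x(-g)(x)$ and $g$ ranges over convex combinations of translates of $f$ exactly when $-g$ ranges over those of $-f$, so $M_*(f)=-M^*(-f)$.

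Finally, extremality. Let $\overline M$ be any upper mean and $f$ any function. For any convex combination $g(x)=\sum c_i f(x+t_i)$, general restricted additivity (Theorem \ref{genrest}, with $\sum c_i=1$) gives $\overline M(g)=\overline M(f)$; monotonicity then gives $\overline M(f)=\overline M(g)\le \overline M(\,\sup_x g(x)\,)=\sup_x g(x)$ by norming. Taking the infimum over $g$ yields $\overline M(f)\le M^*(f)$, so $M^*$ is maximal. Applying this to the conjugate pairs, or repeating the argument with $\inf$ and reversed monotonicity, gives $M_*(f)\le\underline M(f)$ for every lower mean $\underline M$, i.e.\ $M_*$ is minimal. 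The only mild subtlety is ensuring Theorem \ref{genrest} applies with equality $\sum c_i=1$ rather than merely $\sum c_i\ge 0$, which is exactly the normalized case treated there; no genuine obstacle remains.
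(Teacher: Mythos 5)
Your proof is correct and follows essentially the same route as the paper: the axioms are checked directly from the definition, conjugacy is the formal $\sup$/$\inf$ identity, and maximality comes from Theorem \ref{genrest} combined with monotonicity and norming. The only divergence is that you verify restricted subtractivity (d') via the telescoping convex combination $\frac{1}{N}\sum_{j=0}^{N-1}h(x+jt)$, whereas the paper verifies (d) directly by observing that the convex combinations of translates of $f'(x)=\frac{1}{2}\bigl(f(x)+f(x+t)\bigr)$ form a subset of those of $f$; both are equally short and valid.
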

     \begin{proof}
Norming, homogeneity and subadditivity of $M^*$ is clear. For restricted additivity take
 $f'(x)=\bigl( f(x) + f(x+t) \bigr)/2$. As the collection of convex combinations of translates
of $f'$ is a subset of those of $f$, we immediately see that $M^*(f') \geq M^*(f)$, while the converse inequality
follows from subadditivity.

 From Theorem    \ref{genrest} we get that $\Mf(f)=\Mf(g) \leq \sup g$ and hence
$\Mf(f) \leq M^*(f)$ for any upper mean.

The claims for lower means follow by obvious calculations.
     \end{proof}

     This  motivates the following name.

     \begin{Def} 
     We call $M^*$ the \emph{uppermost mean}, and $M_*$ the \emph{lowest mean}.
     \end{Def}

     If the uppermost and lowest means  coincide (a rare thing, a sort of quasi-periodicity), their value is
the \emph{absolute mean}, denoted by $\Mabs(f)$. Such functions  have the same mean  under all circumstances; we call them
\emph{absolutely medial}. Functions commarginal to a constant have this property.

\begin{Th}\label{absolute}
  A function is absolutely medial if and  only if it is  limit-commarginal to a constant.
\end{Th}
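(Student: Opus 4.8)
The plan is to prove the two implications separately; the forward one is immediate from the compatibility results already established, while the converse rests on a two-sided averaging (``squeezing'') argument.

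\emph{Forward direction.} Suppose $f\col c$ with $c$ a constant function. By Theorem~\ref{lf}, $M^*$ is an upper mean and $M_*$ is a lower mean, and by Theorem~\ref{limcomm} limit-commarginality is compatible with every upper and lower mean; hence $M^*(f)=M^*(c)=c$ and $M_*(f)=M_*(c)=c$ by norming, so $M^*(f)=M_*(f)$ and $f$ is absolutely medial with $\Mabs(f)=c$. In particular the constant in the statement, when it exists, must be $\Mabs(f)$.

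\emph{Converse.} Let $f$ be absolutely medial with $M^*(f)=M_*(f)=c$. Since any upper or lower mean $N$ satisfies $N(f+c)=N(f)+c$, the function $f-c$ is absolutely medial with value $0$, and $f-c\col 0$ will give $f=(f-c)+c\col c$ by compatibility of $\col$ with linear operations; so we may assume $c=0$. Write $\mathcal{T}$ for the family of operators $Tf=\sum_i a_if(\cdot+u_i)$ with $a_i\ge 0$, $\sum a_i=1$. This family is closed under composition; each $Tf$ is a convex combination of translates of $f$, hence commarginal to $f$ (a single translate of $f$ is commarginal to $f$, and then one invokes compatibility of commarginality with linear operations); and $T$ never raises the supremum nor lowers the infimum of a function. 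Fix $\varepsilon>0$. Since $M^*(f)=\inf_{T\in\mathcal{T}}\sup_x(Tf)(x)=0$, choose $T_1\in\mathcal{T}$ with $T_1f\le\varepsilon$ everywhere. Because $T_1f\sim f$, Theorem~\ref{commcomp} gives $M_*(T_1f)=M_*(f)=0$, so we may choose $T_2\in\mathcal{T}$ with $T_2T_1f\ge-\varepsilon$ everywhere; as $T_2$ cannot raise the supremum, we still have $T_2T_1f\le\varepsilon$ everywhere. Hence $\phi_\varepsilon:=T_2T_1f$ obeys $\|\phi_\varepsilon\|_\infty\le\varepsilon$ and $\phi_\varepsilon\sim f$. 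Taking $\varepsilon=1/n$ yields functions $\phi_n$ with $\phi_n\sim f$ and $\phi_n\to0$ uniformly; the pair of sequences $g_n:=f$ (constant in $n$) and $g_n':=\phi_n$ then satisfies $g_n\sim g_n'$, $g_n\to f$ and $g_n'\to0$ uniformly, which is exactly $f\col 0$. Undoing the normalization gives $f\col c=\Mabs(f)$. (Alternatively the last step can be phrased via Statement~\ref{asym}.)

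The one genuinely delicate point is the squeezing step: we need a \emph{single} convex combination of translates of $f$ that is simultaneously close to $0$ from above and from below. This works because, once the upper bound $\le\varepsilon$ is secured through $M^*(f)=0$, applying a further average $T_2$ cannot destroy it, whereas $M_*(T_1f)=M_*(f)=0$ — and here the commarginality $T_1f\sim f$ is essential — still allows $T_2$ to lift the function above $-\varepsilon$; everything else is routine bookkeeping with the already proven compatibility of (limit-)commarginality with means and with linear operations.
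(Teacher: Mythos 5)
Your proof is correct and takes essentially the same approach as the paper: both arguments squeeze $f$ by composing a convex average of translates that controls the function from above with one that controls it from below, noting that the composite is still a convex combination of translates (hence commarginal to $f$) and uniformly within $\varepsilon$ of the constant $\Mabs(f)$. The only cosmetic difference is that you choose the two operators sequentially, invoking Theorem~\ref{commcomp} to transfer $M_*(f)=0$ to $M_*(T_1f)=0$, whereas the paper picks both directly from $M^*(f)=M_*(f)=\alpha$ and reads the resulting double average in two ways.
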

\begin{proof}
  Sufficiency follows from Theorem \ref{limcomm}. To show necessity take a function with $\Mabs(f)=\alpha$ and a positive integer $k$.
  By definition we find nonnegative constants $c_i, c_i'$ and $t_i, t_i'\in G$ such that $\sum c_i = \sum c_i'=1$ and
   \[ g(x) = \sum _{i=1}^n c_i f(x+t_i) < \alpha+1/k, \  g'(x) =  \sum _{i=1}^{n'} c_i' f(x+t_i') > \alpha-1/k \]
   for all $x$. Put
    \[ h_k(x) =  \sum _{j=1}^n  \sum _{i=1}^{n'} c_j c_i' f(x+t_j+t_i') =
 \sum _{i=1}^n c_i g'(x+t_i)=  \sum _{i=1}^{n'} c_i' g(x+t_i')  .             \]
 The first expression shows that $h_k \sim f$, the second that $h_k(x)  > \alpha-1/k$ and the third that $h_k(x)  < \alpha+1/k$;
hence $h_k \to\alpha$ uniformly and $f \col \alpha$.
\end{proof}

 \section{Uppermost mean as average}

{\bf Convention. }
Typically a ``set'' is a finite nonempty subset of $G$; we shall write
 $B\vr A$ to mean $B\subset A$, $B \neq \emptyset$, $|B| <\infty$.

 \begin{Def}
     Given a  set $B \vr G$,  the \emph{$B$-average (conditional expectation)} of a function is
     \[     (f|B) (t) = \frac{1}{|B|} \sum _{b\in B} f(b+t) .  \]
 \end{Def}

 \begin{Def}
   The \emph{expansion pseudomean} of a function $f$ is
    \[ E(f) =  \inf_{T\vr  G} \sup_{B\vr  G} \frac{1}{|B+T|} \sum _{b\in B} f(b). \]
 \end{Def}

In the next section we will see why it is not really a mean.

     \begin{Th}  \label{umm} If $ f \geq 0$, then
     \[   M^*(f) =  E(f) = \inf_{B\vr  G}    \sup (f|B) .  \]
     \end{Th}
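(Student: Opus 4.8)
The plan is to establish the chain of inequalities
\[ M^*(f)\ \le\ \inf_{B\vr G}\sup(f|B)\ \le\ E(f)\ \le\ M^*(f), \]
after which all three quantities coincide. The leftmost inequality needs nothing special (and no positivity): for any $B\vr G$ the map $t\mapsto(f|B)(t)=\frac1{|B|}\sum_{b\in B}f(b+t)$ is a convex combination of translates of $f$ with all weights equal to $1/|B|$, so $\sup(f|B)\ge M^*(f)$ by the definition of $M^*$; taking the infimum over $B$ gives the bound.

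For the middle inequality, fix $T\vr G$ and set $e_T:=\sup_{B\vr G}\frac1{|B+T|}\sum_{b\in B}f(b)$, so that $\sum_{b\in B}f(b)\le e_T|B+T|$ for every $B\vr G$. By Lemma~\ref{aligno}, given $\varepsilon>0$ there is a $C\vr G$ with $|C+T|<(1+\varepsilon)|C|$. For any $t\in G$, applying the last bound to $B=C+t$ and using translation invariance of cardinality,
\[ (f|C)(t)=\frac1{|C|}\sum_{b\in C+t}f(b)\ \le\ \frac{e_T\,|C+T|}{|C|}\ <\ (1+\varepsilon)\,e_T . \]
Hence $\inf_{B\vr G}\sup(f|B)\le\sup(f|C)\le(1+\varepsilon)e_T$; letting $\varepsilon\to0$ and then taking the infimum over $T$ gives $\inf_{B\vr G}\sup(f|B)\le E(f)$ (still without using positivity).

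The rightmost inequality $E(f)\le M^*(f)$ is where $f\ge0$ is essential. Given $\delta>0$, choose a convex combination $g(x)=\sum_{i=1}^{n}c_if(x+t_i)$ with $\sup_x g<M^*(f)+\delta$, and approximate the weights by rationals with a common denominator: pick $p_i\in\setZ$, $p_i\ge0$, with $\sum_i p_i=q$ and $\sum_i|c_i-p_i/q|$ so small that $\tilde g(x):=\frac1q\sum_{i}p_i f(x+t_i)$ still satisfies $\sup_x\tilde g<M^*(f)+2\delta$. Put $S=\{t_1,\dots,t_n\}$ and $T:=-S$, so $B+T=B-S$ for every $B$. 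Fix $B\vr G$ and sum $\tilde g$ over $C:=B-S$. Writing $\sum_{x\in C}\tilde g(x)=\frac1q\sum_{i}p_i\sum_{y\in C+t_i}f(y)=\sum_y (\text{coef}_y)\,f(y)$, one checks that for $b\in B$ all $n$ points $b-t_i$ lie in $C$ (since $0\in b-B$), so $\text{coef}_b=\sum_{i}p_i/q=1$, while every $\text{coef}_y\ge0$; as $f\ge0$ the unwanted terms are harmless and
\[ \sum_{b\in B}f(b)\ \le\ \sum_{x\in C}\tilde g(x)\ \le\ |C|\sup_x\tilde g(x)\ =\ |B+T|\,\sup_x\tilde g(x). \]
Therefore $e_T\le\sup_x\tilde g<M^*(f)+2\delta$, whence $E(f)<M^*(f)+2\delta$; letting $\delta\to0$ finishes the argument.

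The routine ingredients — the rational approximation, Lemma~\ref{aligno}, and translation invariance of $|\cdot|$ — cause no trouble. The one delicate point is the bookkeeping in the last step: a \emph{multiset} of translates carrying the weights $p_i/q$ must be replaced by a genuine finite \emph{set} $T=-S$, with the factor $1/q$ absorbed into the enlarged denominator $|B+T|$, and it is precisely the fact that each $b\in B$ is recollected with total weight $1$, combined with $f\ge0$, that validates $\sum_{b\in B}f(b)\le\sum_{x\in B-S}\tilde g(x)$.
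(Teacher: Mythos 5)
Your proof is correct and follows essentially the same route as the paper: the cyclic chain $M^*(f)\le\inf_{B\vr G}\sup(f|B)\le E(f)\le M^*(f)$, with Lemma~\ref{aligno} supplying the middle link and the same double-counting over $B-S$ (using $f\ge0$ to discard the extra nonnegative terms) for the last. The only cosmetic difference is your rational approximation of the weights $c_i$, which is unnecessary --- the bookkeeping that each $b\in B$ is recollected with total coefficient $\sum_i c_i=1$ works verbatim for arbitrary real convex weights, exactly as in the paper's argument.
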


     \begin{proof} Since $f|B$ is a particular case of the linear combinations used in the definition
of $M^*$,  we immediately see that $  M^*(f) \leq  \inf_{B\vr G}    \sup (f|B)$.

Next we show that
  \[ M_1 :=  \inf_{B\vr G}    \sup (f|B) \leq E(f) .  \]
  To see this, given any $T\vr G$ and $\varepsilon>0$, we need to find a $B\vr G$ such that
   \[  \sum _{b\in B} f(b) > (M_1-\varepsilon) |B+T|.   \]
Take $\delta$ such that $M_1/(1+\delta)^2>M_1-\varepsilon  $, then a set $C$ such that $|C+T| < (1+\delta) |C|$ (Lemma \ref{aligno}).
By definition, for some $t$ we haveg
 \[ (f|C)(t) \geq M_1/(1+\delta), \]
 hence with  $B=C+t$ we have
 \[  \sum _{b\in B} f(b) = |B| (f|C)(t) \geq M_1 |B|/ (1+\delta) \geq M_1 |C+T| /(1+\delta)^2 \geq
 (M_1-\varepsilon) |B+T| .\]

Finally we show that $E(f) \leq M^*(f)$.

Take $t_i \in G$ and constants  $ c_i\geq 0$, $ \sum  c_i=1 $
such that
\[ g(x) = \sum_{i=1}^n c_i f(x+t_i) \leq M^*(f) + \varepsilon . \]
Write $ \{ t_i \} =T$. Take any set $B\vr G$. Write $A=B-T$.
We have
\[  \sum_{b\in B} f(b) = \sum_{b\in B} \sum_{i=1}^n c_i f(b-t_i+t_i) = \sum_{i=1}^n c_i \sum_{b\in B} f\bigl((b-t_i)+t_i\bigr)  \]
 \[ \leq \sum_{i=1}^n c_i \sum_{a\in A} f(a+t_i) =  \sum_{a\in A} g(a) \leq |A| \bigl(  M^*(f) + \varepsilon \bigr),   \]
 consequently
  \[ E(f) \leq \sup_{B\vr G} \frac{1}{|B-T|} \sum _{b\in B} f(b)  \leq  \bigl(  M^*(f) + \varepsilon \bigr).   \]
Nonnegativity was only used in this step, and in the next section we show that this  inequality
fails for general functions. \end{proof}

  \begin{Th} 
 For evey  $f$ we have
     \[   M^*(f)  = \inf_{B\vr G}    \sup (f|B) ),  \]
     \[   M_*(f) = \sup_{B\vr G}   \inf (f|B). \]
     \end{Th}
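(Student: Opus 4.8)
The plan is to reduce the general case to the nonnegative case already handled in Theorem \ref{umm}, by exploiting the translation-like invariance built into both $M^*$ and the functional $\inf_{B\vr G}\sup(f|B)$. The key observation is that adding a constant $c$ to $f$ shifts both sides by exactly $c$: indeed $(f+c\,|\,B)=(f|B)+c$ for every $B$, so $\inf_B\sup(f+c\,|\,B)=c+\inf_B\sup(f|B)$; and on the $M^*$ side, every convex combination of translates of $f+c$ is a convex combination of translates of $f$ plus $c$, whence $M^*(f+c)=M^*(f)+c$ (this also follows from norming and subadditivity: $M^*(f)\le M^*(f+c)+M^*(-c)=M^*(f+c)-c$ and symmetrically).

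First I would fix an arbitrary bounded $f$ and choose $c$ with $c>-\inf_x f(x)$, so that $f_1:=f+c\geq 0$. Applying Theorem \ref{umm} to $f_1$ gives $M^*(f_1)=\inf_{B\vr G}\sup(f_1|B)$. Then, using the two displayed shift identities, $M^*(f)=M^*(f_1)-c=\inf_{B\vr G}\sup(f_1|B)-c=\inf_{B\vr G}\sup(f|B)$, which is the first claimed equality. For the second equality, I would apply the first one to $-f$: since $M_*(f)=-M^*(-f)$ by Theorem \ref{lf} (the lowest mean is the conjugate of the uppermost mean), and since $\inf(f|B)=-\sup(-f|B)$ for each $B$, we get
\[ M_*(f) = -M^*(-f) = -\inf_{B\vr G}\sup(-f|B) = -\inf_{B\vr G}\bigl(-\inf(f|B)\bigr) = \sup_{B\vr G}\inf(f|B). \]

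There is essentially no hard step here: the content of the theorem is entirely contained in Theorem \ref{umm}, and what remains is the routine verification that the reduction by an additive constant is legitimate on both sides. The only point requiring a moment's care is making the shift identity $M^*(f+c)=M^*(f)+c$ fully rigorous for negative $c$ as well (needed implicitly when passing back and forth), but as noted this is immediate from norming together with subadditivity of $M^*$, both already established in Theorem \ref{lf}. I would also remark in passing that, unlike Theorem \ref{umm}, here there is no analogue involving the expansion pseudomean $E$: the proof of $E(f)\le M^*(f)$ genuinely used $f\ge 0$ in the step $\sum_{b\in B}f(b)\le\sum_{a\in A}g(a)$, and $E$ is not translation-covariant in the additive-constant sense because the normalizing factor is $|B+T|$ rather than $|B|$; this is precisely the phenomenon the paper flags when it says $E$ is "not really a mean."
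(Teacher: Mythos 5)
Your argument is exactly the paper's: reduce to the nonnegative case of Theorem \ref{umm} via the shift identities $M^*(f+c)=M^*(f)+c$ and $(f+c\,|\,B)=(f|B)+c$, then obtain the $M_*$ formula by conjugacy. The proposal is correct and even slightly more explicit than the paper's proof (which leaves the $M_*$ half and the negative-$c$ case implicit).
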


     \begin{proof}

The case of nonnegative functions is included in the previous theorem.
For the general case observe that
for constants we have $  M^*(f+c) = M^*(f) + c $ and $ (f+c|B) (t) = (f|B) (t) +c$,
so with suitable $c$ it reduces to the nonnegative case.
     \end{proof}

We do not need all finite sets for the above equalities.

     \begin{Def}  \label{tanudef}
       We call a collection  $\B$ of nonempty subsets of the group     $G$ a
\emph{witness}, if for every function $f$ we have
     \[   M^*(f) = \inf _{B\in \B} \sup (f|B).  \]
     \end{Def}

     \begin{Th}  \label{tanufelt}
 A collection  $\B$ of nonempty subsets of the group  $G$ is a witness if and only if for
     every finite set $T\vr G$ we have
\begin{equation}\label{tanu}
        \inf _{B\in \B} \frac{|B+T|}{|B|} = 1 . \end{equation}
     \end{Th}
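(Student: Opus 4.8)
\textbf{Proof plan for Theorem \ref{tanufelt}.}

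The plan is to prove both directions by reducing everything to nonnegative functions, since for constants $c$ we have $M^*(f+c)=M^*(f)+c$ and $(f+c|B)(t)=(f|B)(t)+c$, so $\inf_{B\in\B}\sup(f|B)$ also shifts by $c$; hence the witness property for $f$ is equivalent to that for $f+c$, and we may assume $0\le f\le 1$.

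For the necessity direction, suppose $\B$ is a witness but \eqref{tanu} fails for some finite $T\vr G$, so $\inf_{B\in\B}|B+T|/|B|=1+\eta$ for some $\eta>0$. I would test the witness property on the indicator-type function $f=\1{G\setminus(-T+S)}$ for a cleverly chosen finite $S$, or more robustly argue as follows. The general lower bound $\sup(f|B)\ge |B|^{-1}\sum_{b\in B}f(b)$ and the argument in the proof of Theorem \ref{umm} (the step showing $M_1\le E(f)$, run in reverse) give that for $f\ge 0$,
\[
\inf_{B\in\B}\sup(f|B)\ \ge\ \inf_{B\in\B}\frac{1}{|B+T|}\sum_{b\in B}f(b)\cdot\frac{|B+T|}{|B|}\ \ge\ (1+\eta)\,E_T(f),
\]
where $E_T(f):=\sup_{B\vr G}|B+T|^{-1}\sum_{b\in B}f(b)$; and by Theorem \ref{umm}, $E(f)=\inf_T E_T(f)=M^*(f)$. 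So it suffices to exhibit an $f\ge 0$ with $E_T(f)$ close to $M^*(f)$ while $(1+\eta)E_T(f)>M^*(f)$: e.g. take $f$ so that $M^*(f)>0$ and approximate the infimum over $T$ in $E(f)$ well enough that $E_T(f)\le M^*(f)/(1+\eta/2)$ is impossible for the offending $T$ — here one must instead pick $f$ tailored to the specific $T$. Concretely: let $f=\1{F}$ for $F$ a fundamental domain showing $M^*(\1F)$ is as small as the packing of translates of $-T$ allows; then $E_T(\1F)$ computes (essentially) $1/|T|$-type quantities and the failure of \eqref{tanu} forces $\sup(\1F|B)$ over $B\in\B$ to stay bounded away from $M^*(\1F)$. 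The cleanest route is: by failure of \eqref{tanu}, every $B\in\B$ satisfies $|B+T|\ge(1+\eta)|B|$, so $B$ cannot be ``$T$-saturated''; choosing $f$ supported on a single large $T$-aligned block makes $(f|B)$ miss mass proportional to $\eta$, contradicting the witness equality.

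For the sufficiency direction, assume \eqref{tanu} holds and fix $f\ge 0$ (WLOG). One inequality, $\inf_{B\in\B}\sup(f|B)\ge M^*(f)$, is immediate since each $(f|B)$ is a convex combination of translates of $f$, so $\sup(f|B)\ge M^*(f)$. For the reverse, I mimic the proof of Theorem \ref{umm}: given $\varepsilon>0$, pick $t_i\in G$, $c_i\ge0$, $\sum c_i=1$ with $g(x)=\sum_i c_i f(x+t_i)\le M^*(f)+\varepsilon$, set $T=\{t_i\}$, and choose $B\in\B$ with $|B+T|<(1+\delta)|B|$ using \eqref{tanu}. Then, writing $A=B-T$ (so $|A|=|B+(-T)|\le(1+\delta)|B|$ by the same lemma applied with $-T$, after noting \eqref{tanu} for $-T$ follows from the $T$-case by translation), the computation
\[
\sum_{b\in B}f(b)\le\sum_{a\in A}g(a)\le|A|\,(M^*(f)+\varepsilon)
\]
gives $\sup(f|B)\le |B|^{-1}\sum_{b\in B}f(b)\cdot$(nothing) — wait, one needs $\sup(f|B)$, not the average, so instead translate $B$ to realize the sup: pick $s$ with $(f|B)(s)=\sup(f|B)$ (finite $B$), and apply the inequality to $B+s$; since $|(B+s)+T|/|B+s|=|B+T|/|B|<1+\delta$, the same bound yields $\sup(f|B)=(f|B)(s)\le(1+\delta)(M^*(f)+\varepsilon)$. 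Letting $\delta,\varepsilon\to0$ gives $\inf_{B\in\B}\sup(f|B)\le M^*(f)$, completing this direction.

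\textbf{Main obstacle.} The sufficiency direction is essentially a re-run of Theorem \ref{umm} and should go through with only the care noted above about passing from averages to suprema and from $T$ to $-T$. The real work is the necessity direction: constructing, from a single finite $T$ violating \eqref{tanu}, an explicit function $f$ whose witness-equality is thereby destroyed. The subtlety is that $M^*(f)=\inf_T E_T(f)$ takes an infimum over all $T$, so it is not enough that $\B$ behaves badly for one $T$ — I must choose $f$ so that this particular $T$ is (nearly) optimal in the definition of $M^*(f)$, so that the $(1+\eta)$ loss incurred by $\B$ on $T$ cannot be recovered. Making that choice of $f$ precise — plausibly an indicator of a maximal $(-T)$-packing complement, or a limit thereof — is where the argument needs genuine care.
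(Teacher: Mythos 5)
Your sufficiency argument is essentially the paper's own (the paper simply says it goes ``like in the previous proof'' of Theorem \ref{umm}), and it is correct once one notes, as you do, that \eqref{tanu} is assumed for \emph{every} finite $T\vr G$ and so may be applied directly to the set $-T$. Your parenthetical claim that the $-T$ case ``follows from the $T$-case by translation'' is not right ($-T$ is a reflection, not a translate, of $T$), but it is also unnecessary. Your device of translating $B$ to the point where $(f|B)$ nearly attains its supremum is exactly how the paper handles the average-versus-supremum issue in Theorem \ref{umm}.

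The necessity direction has a genuine gap, which you essentially concede. Two concrete problems. First, your displayed chain of inequalities is false: from $|B+T|/|B|\ge 1+\eta$ for all $B\in\B$ one may conclude $\inf_{B\in\B}|B|^{-1}\sum_{b\in B}f(b)\ \ge\ (1+\eta)\,\inf_{B\in\B}|B+T|^{-1}\sum_{b\in B}f(b)$, but the right-hand side is an infimum over $B\in\B$, not the supremum over all $B\vr G$ defining $E_T(f)$; replacing it by $(1+\eta)E_T(f)$ reverses an inequality. Second, none of your candidate functions is actually constructed, and the construction is where the theorem lives. The paper's idea, absent from your sketch, is to build a \emph{single} function defeating every $B\in\B$ simultaneously: choose for each $B\in\B$ a translate $t(B)$ so that the sets $B+T+t(B)$ are pairwise disjoint (possible by transfinite induction, since $G$ has only $|G|$ finite subsets), then redistribute the constant function $1$ by moving the unit mass at each $x\in G$ to a point $z(x)\in x-T$, choosing $z(x)\in B+t(B)$ whenever $x\in B+T+t(B)$ (at most one such $B$, by disjointness). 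The resulting $f=\sum_{x}\1{z(x)}$ is commarginal to the constant $1$, hence $M^*(f)=1$ by Theorem \ref{commcomp}, while $\sum_{y\in B+t(B)}f(y)\ge|B+T|$ gives $\sup(f|B)\ge|B+T|/|B|>c>1$ for every $B\in\B$, so $\B$ is not a witness. Note also that the only possible failure mode is $\inf_{B\in\B}\sup(f|B)>M^*(f)$, since $\sup(f|B)\ge M^*(f)$ for every nonempty finite $B$ (as you yourself observe in the sufficiency part); your intuition that $(f|B)$ should ``miss mass'' points the wrong way --- the construction must \emph{concentrate} mass so that every $B\in\B$ sees too much of it somewhere.
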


     \begin{proof} Sufficiency is shown like in the previous proof, where the only fact we used was
that the set of all subsets satisifies \eqref{tanu}.

 To show necessity, assume that  $\B$ fails to satisfy \eqref{tanu}. This means that there is a finite
  set $T\vr G$ such that $ |B+T|/ |B|>c>1$ for all ${B\in \B}$. We may suppose $0\in T$, since this property
 is translation invariant.

 For each ${B\in \B}$ take an element
 $t(B)\in G$ such that all sets $B+T+t(B)$ are disjoint. The existence of such $t(B)$ is easily shown
 by a transfinite induction, since the cardinality of the collection of finite subsets of $G$
 is the same as that of $G$.

 For every $x\in G$ select a $z(x)\in x-T$ so that $z(x)\in B+t(B)$ if
  \[ (B+t(B)) \cap (x-T) \neq \emptyset, \]
in other words, if $x\in B+t(B)+T$
for some  ${B\in \B}$. By the disjointness assumption there can be at most one such $B$. If there
is no such $B$, let $z(x)\in x-T$ arbitrary.

Put
 \[ f = \sum_{x\in G} \1 {z(x)} .\]
 This function is commarginal to the identically 1 function, hence $M^*(f)=1$. On the other hand
  \[ \sum_{x\in B+t(B)} f(x) = |B+T|, \]
consequently $\sup (f|B) > c>1$ for all  ${B\in \B}$.
     \end{proof}

     Sometimes we can replace infimum by a limit.

     \begin{Th}  \label{sorozattanu}
     Let $B_i$ be a sequence of finite subsets of $G$ such that for every
 $t\in G$ we have
\begin{equation}\label{elnyel}
        \frac{ |(B_i+t) \setminus  B_i|}{|B_i|} \to  0 .   \end{equation}
     Then
\begin{equation}\label{sorozaton}
       M^*(f) = \lim_{i\to\infty} \sup (f|B_i) .  \end{equation}

Condition \eqref{elnyel} is also necessary.

Such a sequence exists if and only if $G$ is countable.

     \end{Th}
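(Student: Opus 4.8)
The plan is to handle the three assertions of Theorem~\ref{sorozattanu} separately; the first is a routine F{\o}lner-averaging argument and the other two reuse machinery already in the paper.

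\textbf{Sufficiency of \eqref{elnyel}.} Fix a function $f$. Since each $(f|B_i)$ is a convex combination of translates of $f$, the definition of $M^*$ gives $M^*(f)\le\sup(f|B_i)$, hence $M^*(f)\le\liminf_i\sup(f|B_i)$. For the opposite inequality take any convex combination $g(x)=\sum_{j=1}^{n}c_jf(x+t_j)$ with $c_j\ge0$, $\sum c_j=1$. Averaging over $B_i$ and reindexing each inner sum by $b'=b+t_j$ shows $(g|B_i)(t)=(f|B_i)(t)+\varepsilon_i(t)$, where, since $|B_i+t_j|=|B_i|$,
\[
\sup_t|\varepsilon_i(t)|\le 2\sup_x|f(x)|\sum_{j=1}^{n}c_j\,\frac{|(B_i+t_j)\setminus B_i|}{|B_i|},
\]
which tends to $0$ by \eqref{elnyel}. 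As $(g|B_i)(t)\le\sup g$ for every $t$, this yields $\sup(f|B_i)\le\sup g+o(1)$, so $\limsup_i\sup(f|B_i)\le\sup g$; taking the infimum over all such $g$ gives $\limsup_i\sup(f|B_i)\le M^*(f)$, and \eqref{sorozaton} follows.

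\textbf{Necessity.} If \eqref{elnyel} fails there are $t^*\in G$, $c>0$ and indices $i_1<i_2<\dots$ with $|(B_{i_k}+t^*)\setminus B_{i_k}|\ge c|B_{i_k}|$; setting $T=\{0,t^*\}$ we get $|B_{i_k}+T|\ge(1+c)|B_{i_k}|$. Now I would rerun the construction from the proof of Theorem~\ref{tanufelt}, using ordinary induction (legitimate because the $B_{i_k}$ form a sequence and $G$ is infinite): choose $t_k$ so that the sets $B_{i_k}+T+t_k$ are pairwise disjoint, and for each $x\in G$ choose $z(x)\in x-T$ lying in the (at most one) $B_{i_k}+t_k$ that meets $x-T$, arbitrarily otherwise. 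The function $f=\sum_{x\in G}\1{z(x)}$ is bounded by $|T|$ and commarginal to the identically $1$ function (split $1$ at each $x$ onto $z(x)$), so $M^*(f)=1$ by Theorem~\ref{commcomp}. But $\{y:z(y)\in B_{i_k}+t_k\}=B_{i_k}+T+t_k$, hence $\sum_{b\in B_{i_k}}f(b+t_k)=|B_{i_k}+T|$ and $\sup(f|B_{i_k})\ge|B_{i_k}+T|/|B_{i_k}|\ge1+c$; thus $\lim_i\sup(f|B_i)\ne M^*(f)$ and \eqref{sorozaton} fails for this $f$.

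\textbf{Countability.} If some sequence satisfying \eqref{elnyel} exists, let $G_0$ be the (countable) subgroup generated by $\bigcup_i B_i$. For $t\notin G_0$ the sets $B_i$ and $B_i+t$ lie in the disjoint cosets $G_0$ and $G_0+t$, so $|(B_i+t)\setminus B_i|/|B_i|=1$, contradicting \eqref{elnyel}; hence $G=G_0$ is countable. Conversely, if $G=\{t_1,t_2,\dots\}$ is countable, set $H_n=\langle t_1,\dots,t_n\rangle$, write $H_n\cong\setZ^{r}\times F$ with $F$ finite by the structure theorem, and let $B_n=\{-N_n,\dots,N_n\}^{r}\times F$ with $N_n$ chosen so large that $|(B_n+t_j)\setminus B_n|/|B_n|\le1/n$ for all $j\le n$; this is possible because translation by $t_j\in H_n$ shifts the $\setZ^{r}$-coordinate by a fixed vector and merely permutes $F$, and cubes are F{\o}lner in $\setZ^{r}$. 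Then for fixed $t=t_j$ the bound holds for every $n\ge j$, so \eqref{elnyel} is satisfied.

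No step is a serious obstacle: the substantive content is the necessity direction, but that is essentially the combinatorial construction already given in the proof of Theorem~\ref{tanufelt}, adapted from an arbitrary witness family to a countable sequence.
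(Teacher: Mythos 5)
Your proof is correct. Two of its three parts coincide with the paper's: for necessity you rerun the counterexample construction from the proof of Theorem~\ref{tanufelt} along the bad subsequence (the paper simply cites that theorem; your version is in fact slightly more careful, since you extract the subsequence on which $|(B_{i}+t^*)\setminus B_{i}|/|B_{i}|$ stays bounded away from $0$ and observe that $\sup(f|B_{i_k})\geq 1+c$ along it already prevents \eqref{sorozaton}), and for countability both directions match the paper, except that where the paper invokes Lemma~\ref{aligno} to produce $B_k$ with $|B_k+\{0,g_1,\dots,g_k\}|<(1+1/k)|B_k|$, you reprove that lemma concretely via the structure theorem and cubes. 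The genuine divergence is in the sufficiency direction: the paper notes that $|B_i+T|\leq |B_i|+\sum_{t\in T}|(B_i+t)\setminus B_i|$, concludes that every infinite subfamily of $\{B_i\}$ is a witness in the sense of Theorem~\ref{tanufelt}, and converts ``infimum over every infinite index set equals $M^*(f)$'' into the limit statement; this route ultimately rests on the $E(f)$ machinery and the reduction to nonnegative functions in Theorem~\ref{umm}. You instead give a self-contained F{\o}lner averaging argument, comparing $(f|B_i)$ directly with $(g|B_i)$ for an arbitrary convex combination $g$ of translates and bounding the error by $2\sup|f|\sum_j c_j|(B_i+t_j)\setminus B_i|/|B_i|$. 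Your argument is more elementary and avoids the detour through the expansion pseudomean and the nonnegativity reduction, at the cost of not exhibiting the statement as a corollary of the general witness criterion, which the paper reuses elsewhere (e.g.\ in Theorem~\ref{umdwitness}).
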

     \begin{proof}
       As for every finite $T\vr G$ we have
        \[ |B_i+T| \leq |B_i| + \sum_{t\in T}   |(B_i+t) \setminus  B_i| ,  \]
from condition \eqref{elnyel} we infer that $ |B_i+T| / |B_i| \to1$, hence every infinite collection of
the sets $B_i$ is a witness. Hence
 \[   M^*(f) = \inf _{i\in I} \sup (f|B_i)  \]
for every infinite set $I$ of integers, which is clearly equivalent to \eqref{sorozaton}.

Conversely, if
       \eqref{elnyel} fails for some $t$, then \eqref{tanu} fails for $T=\{0,t \}$ and the collection
$ \{B_i \}$ is not a witness.

If $G$ is countable, list the elements of $G$ as
 \[ G = \{ 0, g_1, g_2, \ldots\} . \]
 Take a set $B_k$ such that
  \[ | B_k + \{0, g_1, \ldots, g_k \} | < (1+1/k) |B_k|. \]

On the other hand, if $G$ is uncountable, let $H$ be the subgroup generated by $\bigcup B_i$. This is a countable
subgroup. Take any $t\in G\setminus H$. The sets $B_i$ and $B_i+t$ are disjoint, so property \eqref{elnyel} cannot hold.
     \end{proof}

 \section{The expansion mean for general functions}

 Given a function $f$ we put
  \[ f^+(x) = \max \{ f(x), 0 \}, \]
  the positive part of $f$.
  \begin{Th}
    For every function we have
    \[ E(f) = \min \{ \sup f, M^*(f^+) \}. \]
  \end{Th}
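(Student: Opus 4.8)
The plan is to prove the two inequalities $E(f)\le\min\{\sup f,\,M^*(f^+)\}$ and the reverse separately. The upper bound is immediate: taking $T=\{0\}$ in the definition of $E$ gives $E(f)\le\sup_{B\vr G}\frac1{|B|}\sum_{b\in B}f(b)\le\sup f$; and since $f\le f^+$ pointwise, $\sum_{b\in B}f(b)\le\sum_{b\in B}f^+(b)$ for every finite $B$, so $E(f)\le E(f^+)$, which equals $M^*(f^+)$ by Theorem \ref{umm} applied to the nonnegative function $f^+$.

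For the reverse inequality I would first record that $M^*(g)\le\sup g$ for every $g$ (the trivial convex combination), whence $M^*(f^+)\le\sup f^+=\max\{\sup f,0\}$. This makes the right-hand side transparent: when $\sup f<0$ we have $f^+\equiv0$, $M^*(f^+)=0$ and the minimum equals $\sup f$; when $\sup f\ge0$ the minimum equals $M^*(f^+)$; and $M^*(f^+)>0$ forces $\sup f>0$. Hence it is enough to establish two claims: (i) $E(f)\ge\min\{\sup f,0\}$ for every $f$, and (ii) $E(f)\ge M^*(f^+)$ whenever $M^*(f^+)>0$. Claim (i) covers the case $\sup f<0$ and the case $\sup f\ge0$ with $M^*(f^+)=0$; claim (ii) covers the remaining case.

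Claim (i) is a one-point estimate. Given $T\vr G$ and $\varepsilon>0$, pick $x_0$ with $f(x_0)>\sup f-\varepsilon$ and take $B=\{x_0\}$: then $\frac1{|B+T|}\sum_{b\in B}f(b)=f(x_0)/|T|$, which is $\ge0$ if $f(x_0)\ge0$, and is $\ge f(x_0)>\sup f-\varepsilon$ if $f(x_0)<0$ (dividing a negative number by $|T|\ge1$ increases it). In both cases this exceeds $\min\{\sup f,0\}-\varepsilon$; letting $\varepsilon\to0$ and then taking $\inf_T$ yields (i).

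Claim (ii) is the substantive point. Fix $T\vr G$ and $\varepsilon\in(0,M^*(f^+))$. Since $E(f^+)$ is an infimum over $T$, for our fixed $T$ we have $M^*(f^+)=E(f^+)\le\sup_{B\vr G}\frac1{|B+T|}\sum_{b\in B}f^+(b)$, so there is a finite $B_1$ with $\frac1{|B_1+T|}\sum_{b\in B_1}f^+(b)>M^*(f^+)-\varepsilon>0$; in particular $\sum_{b\in B_1}f^+(b)>0$, so $B:=\{b\in B_1:f(b)>0\}$ is nonempty. The key observation is that deleting from $B_1$ the points where $f\le0$ leaves $\sum f^+$ unchanged (those points contribute $0$) while it can only shrink $B_1+T$, and that $f=f^+$ on $B$; hence
\[
 \frac1{|B+T|}\sum_{b\in B}f(b)=\frac1{|B+T|}\sum_{b\in B_1}f^+(b)\ \ge\ \frac1{|B_1+T|}\sum_{b\in B_1}f^+(b)\ >\ M^*(f^+)-\varepsilon .
\]
Taking $\sup_B$, then $\varepsilon\to0$, then $\inf_T$ gives $E(f)\ge M^*(f^+)$, and combining everything proves the theorem. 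I expect (ii) to be the only real obstacle: the trick is not to try to build a good averaging set for the signed $f$ from scratch, but to take one nearly optimal for the nonnegative $f^+$ and discard its points with $f\le0$, which is free because they already contribute nothing to the $f^+$-sum.
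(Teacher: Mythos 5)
Your proof is correct and follows essentially the same route as the paper: the upper bound is immediate from $f\le f^+$ and Theorem \ref{umm}, and the lower bound splits into the singleton estimate (your claim (i), the paper's Case 2) and the device of taking a near-optimal set for $f^+$ and discarding the points where $f\le 0$ (your claim (ii), the paper's Case 1). If anything your write-up is slightly cleaner, since it makes explicit that the near-optimal set is chosen for $f^+$ (the paper's Case 1 display writes $\sum_{b\in B'}f(b)$ where $\sum_{b\in B'}f^+(b)$ is meant) and spells out why the two claims exhaust all cases.
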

  \begin{proof}
    Clearly $E(f) \leq \sup f$, and $E(f) \leq E(f^+) =  M^*(f^+)$ by Theorem \ref{umm}.
To prove equality we will find for every $\varepsilon>0$ and finite $T\vr G$ a set $B\vr G$ such that
 \[ \frac{1} {|B+T|} \sum _{b\in B} f(b) >\min \{ \sup f, M^*(f^+) \} - \varepsilon.  \]

{\bf Case 1. } $M^*(f^+)>0$.

 As $E(f^+) =  M^*(f^+)$, there is a set $B'$ such that
\[ \frac{1} {|B'+T|} \sum _{b\in B'} f(b) > M^*(f^+)  - \varepsilon.  \]
If $\varepsilon< M^*(f^+)$, the sum must be positive, so the set
 \[ B = \{ b\in B': f(b) > 0 \}\]
 is not empty. Clearly $|B+T| \leq |B'+T|$ and
  \[ \sum _{b\in B} f(b) \geq  \sum _{b\in B'} f(b), \]
so this $B$ suffices.

{\bf Case 2. } $M^*(f^+)=0$

Take a $b\in G$ such that $f(b)> \sup f - \varepsilon$. Put $B = \{b \}$. Then
\[ \frac{1} {|B+T|} \sum _{b\in B} f(b) = \frac{f(b)}{ |T| } \geq \min \{ f(b), 0 \}. \]
Observe the dichotomy: if $f(b)<0$, the minimum is attained for $|T| =1$,
and  if $f(b)>0$, we get the infimum for  $|T| \to\infty$. This explains the
strange behavior of $E(f)$.

  \end{proof}

 \section{Means with prescribed values}\label{prescribed}

\begin{Th}\label{meanextension}
  Let $\mu$ be a  functional, defined on a  subset $\Lambda$ of bounded functions.
Assume that constants belong to $\Lambda$, and if $f$ is a constant function, $f\equiv c$, then $ \mu(f)=c$.
Suppose that

(*) for every $f_1, \ldots, f_k\in \Lambda$, $t_1, .., t_k\in G$ and $c_1, \ldots, c_k\in \setR$ such that
 \[ c_1 f_1(x+t_1) + ... + c_k f_k(x+t_k) \geq 0 \]
we have $ \sum c_i \mu(f_i) \geq 0$.

Then $\mu$ can be extended to a full mean.
\end{Th}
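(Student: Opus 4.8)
The plan is to mimic the proof of Theorem \ref{full}, replacing the role of the subadditive dominating functional by the uppermost mean $M^*$ (possibly shifted or composed with positive parts), and applying Hahn--Banach to extend $\mu$ while keeping it below a suitable sublinear majorant. First I would observe that condition (*) is exactly what is needed to guarantee that $\mu$ is consistent with the ``generalized restricted additivity'' relations and with domination: taking $k=1$, (*) forces monotone behavior under translation and under scaling by nonnegative constants; taking combinations, it forces $\mu$ to respect any linear identity among translates of elements of $\Lambda$. In particular, $\mu$ is well-defined and linear on the linear span $L_0 = \operatorname{span}\{ f(\cdot + t) : f\in\Lambda,\ t\in G\}$, because any linear dependence $\sum c_i f_i(x+t_i) = 0$ yields both $\sum c_i\mu(f_i)\ge 0$ and $\sum(-c_i)\mu(f_i)\ge 0$, hence $\sum c_i\mu(f_i)=0$. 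So the first real step is: \emph{extend $\mu$ to a linear functional $\mu_0$ on $L_0$, translation-invariant by construction, with $\mu_0(c)=c$ for constants.}

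Next I would show that on $L_0$ we have $\mu_0(g) \le M^*(g)$ for every $g\in L_0$. Given $g\in L_0$ and $\varepsilon>0$, pick a convex combination of translates $h(x)=\sum d_j g(x+s_j)$ with $\sup h \le M^*(g)+\varepsilon$. Then $\bigl(M^*(g)+\varepsilon\bigr)\cdot\mathbf 1 - h \ge 0$ is itself a combination of translates of elements of $\Lambda$ (since $g\in L_0$ and constants are in $\Lambda$), so (*) applies and gives $\bigl(M^*(g)+\varepsilon\bigr) - \sum d_j\mu_0(g) \ge 0$; but $\sum d_j\mu_0(g) = \mu_0(g)$ by translation-invariance and $\sum d_j = 1$, so $\mu_0(g)\le M^*(g)+\varepsilon$. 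Symmetrically $\mu_0(g)\ge M_*(g)$. Since $M^*$ is subadditive and positively homogeneous (Theorem \ref{lf}) and $\mu_0 \le M^*$ on $L_0$, the Hahn--Banach theorem yields a linear extension $M_1$ of $\mu_0$ to all bounded functions with $M_1 \le M^*$ everywhere. Automatically $M_1 \ge M_*$ as well, via $M_1(f) = -M_1(-f) \ge -M^*(-f) = M_*(f)$.

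Finally I would verify that $M_1$ is a full mean extending $\mu$. Being linear, it satisfies homogeneity, subadditivity and superadditivity; norming holds since $M_1$ extends $\mu_0$ and $\mu_0(c)=c$; restricted additivity/subtractivity holds because all difference functions $f(x)-f(x+t)$ satisfy $M^*$ of both $\pm$ of them being $0$ (Theorem \ref{lf} and Theorem \ref{genrest}, or directly: they are commarginal to $0$), whence $M_1$ vanishes on them by the sandwich $M_* \le M_1 \le M^*$; monotonicity follows as in Theorem \ref{full} from $f\le g \Rightarrow M_1(f-g)\le M^*(f-g)\le 0$. Thus $M_1$ is simultaneously an upper and a lower mean, i.e. a full mean, and it agrees with $\mu$ on $\Lambda\subset L_0$. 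The main obstacle is the bookkeeping in the step $\mu_0\le M^*$: one must be careful that the constant function appearing as $\bigl(M^*(g)+\varepsilon\bigr)\mathbf 1 - h$ is genuinely expressible as a finite real-linear combination of translates of members of $\Lambda$ so that hypothesis (*) is literally applicable — this is where the assumption that constants lie in $\Lambda$ is essential, and it is the only place the specific form of (*) (allowing arbitrary real coefficients, not just nonnegative ones, and several different $f_i$) is genuinely used.
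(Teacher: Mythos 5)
Your proof is correct, and it takes a genuinely more direct route than the paper's. The paper invokes Zorn's lemma to obtain a maximal extension of $\mu$ still satisfying (*), verifies one by one the four criteria of Theorem \ref{meancriteria} for that maximal extension (the hardest being closedness and the mediality of difference functions, the latter via the fact that $M^*$ of a signed combination of difference functions is $0$), and only then passes through Theorem \ref{full} to get a full mean. You instead extend $\mu$ linearly to the span $L_0$ of all translates of members of $\Lambda$ --- well-definedness being exactly the ``both signs of (*)'' argument the paper uses to show its maximal domain is a subspace --- and then prove the single inequality $\mu_0\le M^*$ on $L_0$ by testing (*) against $\bigl(M^*(g)+\varepsilon\bigr)\mathbf 1 - h$ for a near-optimal convex combination $h$ of translates of $g$; this is where the hypothesis that constants lie in $\Lambda$ enters, precisely as you flag. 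One Hahn--Banach application against the sublinear functional $M^*$ then finishes, with restricted subtractivity forced by the sandwich $0=M_*(h)\le M_1(h)\le M^*(h)=0$ on difference functions (Theorem \ref{genrest} applied to $M^*$ with coefficients summing to $0$). What your route buys is the elimination of Zorn's lemma and of the closedness verification; what the paper's route buys is that the intermediate maximal object is itself a (not necessarily full) mean in the sense of Theorem \ref{meancriteria}, which the paper wants anyway as a structural statement. Both arguments hinge on the same computation with $M^*$, so the difference is organizational rather than conceptual, but your version is a legitimate and cleaner alternative.
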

\begin{Rem}
Condition (*) is clearly necessary for the existence of such an extension.

$\Lambda$ need not be a subspace. We do not need to assume that $\mu$ is bounded and linear,
this will follow from condition (*).
\end{Rem}

\begin{proof}
Consider all extensions of $\mu$ that satisfy (*). By Zorn's lemma there is a maximal among them. Let
$M$ be such a maximal one, defined on a set $L \supset\Lambda$. We show that $M$ and $L$ satisfy the conditions
of Theorem \ref{meancriteria}.

Observe first that $L$ is translation invariant. Indeed, if (*) holds for a collection of tunctions,
it automaticallyy holds for the collection of translations of these functions.

Let $L'$ be the subspace generated by $L$. Any $g\in L'$ is of the form
 \[ g =c_1 f_1 + ... + c_k f_k, \ f_i\in L . \]
This representation is typically not unique, but the value of $\sum c_i M(f_i)$ is independent of the representation.
Indeed, if $\sum c_if_i =\sum c_i'f_i' $, then $\sum c_if_i - \sum c_i'f_i' $ and $ \sum c_i'f_i'- \sum c_if_i   $ are both nonnegatve, whence
 $\sum c_iM(f_i) -\sum c_i'M(f_i') $ is both nonegative and nonpositive.
This argument shows that $M'(g) = \sum c_iM(f_i)$ is a well-defined linear extension of $M$ to $L'$;
by the maximality assumption $M'=M$, $L'=L$.

From the conditions of  Theorem \ref{meancriteria}, (a), norming, is among our assumptions.
(b'), negaivity, is the case $k=1$ of (*).

     (c) Closedness means that if a function $f$ has the property that for every $\varepsilon>0$ there are  functions
$\underline g, \overline g\in L$ such that $\underline g\leq f \leq  \overline g$ and
$ M(\overline g)-M(\underline g) < \varepsilon$, then $f\in L$. If this were not the case, we could extend $M$
to $L \cup \{ f\}$ by putting
 \[ M'(f) = \sup_{\underline g\leq f} M(\underline g) = \inf_{  \overline g \geq f} M(  \overline g).  \]
We need to check that (*) holds for this extension, that is, if
\begin{equation}\label{1}
 c_1 f(x+t_1) + ... + c_k f(x+t_k) + h(x) \geq 0 \end{equation}
with $h\in L$, then
we have $ (\sum c_i)M'(f) + M(h)  \geq 0$. Now \eqref{1} implies that
 \[ \sum c_i g_i(x+t_i) + h(x) \geq 0, \]
where $g_i=  \overline g$ if $c_i \geq 0$ and $g_i=\underline g$ if $c_i<0$. Applying (*) to this equation we obtain
 \[   (\sum c_i)M'(f) + M(h)  \geq -\varepsilon  \sum |c_i |, \]
and we are done.

Finally we show
(d), that is, for every function $g$, the function $f(x)=g(x+t)-g(x)$ is in $L$ and $M(f)=0$.
If this is not the case, we  extend $M$
to $L \cup \{ f\}$ by putting $M'(f)=0$.
We need to check that (*) holds for this extension, that is, if
\begin{equation}\label{2}
 c_1 f(x+t_1) + ... + c_k f(x+t_k) + h(x) \geq 0 \end{equation}
with $h\in L$, then
we have $ M(h)  \geq 0$. (In (*) we can have many wighted translations of functions from $L$, but
we can combine them into a single one by the previous arguments.)

We know that $f$ is absolutely medial with mean 0, hence so is $F(x)= \sum c_i f(x+t_i)$.In particular,
$M^*(F)=0$, which means that there are constants $a_1, \ldots, a_m \geq 0$ and $u_1, \ldots, u_m\in G$
such that $\sum a_i =1$ and $\sum a_i F(x+u_i) < \varepsilon$.

Summing the assumption $F+h\geq0$ with these weights and translations we obtain
 \[ \sum a_i h(x+u_i) \geq -\varepsilon, \]
that is, $ \sum a_i (h(x+u_i) +\varepsilon) \geq 0$. By (*) this implies $M(h)\geq -\varepsilon$. As we have this for all
positive $\varepsilon$, we conclude $M(h) \geq 0$.

 So by Theorem \ref{meancriteria} $M$ is a mean, which by Theorem \ref{full} can be extended to a full mean.
\end{proof}

\begin{Pro}\label{upperextension}
  Let $\mu$ be a  functional, defined on a  subset $\Lambda$ of bounded functions.
When can $\mu$  be extended to an upper mean?
\end{Pro}

Om view of Theorem \ref{genrest}
the following condition is necessary. If $f_1, \ldots, f_k,g\in \Lambda$, $t_1, .., t_k, u_1, \ldots, u_m\in G$,
$c_1, \ldots, c_k, d_1, \ldots, d_m >0$ and
\[ c_1 f_1(x+t_1) + ... + c_k f_k(x+t_k) \geq d_1 g(x+u_1) + \ldots + d_m g(x+u_m), \]
then $ \sum c_i \mu(f_i) \geq  \sum d_j \mu(g)$. We cannot decide whether it is sufficient.

     \begin{Th}\label{givenfull}
       Let $f$ be a bounded function, and $\alpha\in[M_*(f), M^*(f)]$ any number. There is a full mean $M$ such that
$M(f)=\alpha$.
     \end{Th}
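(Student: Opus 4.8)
The plan is to invoke Theorem~\ref{meanextension} with a cleverly chosen starting functional. Let $\Lambda = \{f\} \cup \{\text{constants}\}$, and define $\mu$ on constants in the obvious norming way and $\mu(f) = \alpha$. If condition $(*)$ of Theorem~\ref{meanextension} holds for this $\mu$, then $\mu$ extends to a full mean $M$, which by construction satisfies $M(f) = \alpha$, and we are done. So the entire argument reduces to verifying $(*)$.

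To check $(*)$, suppose we are given a relation $c f(x+t_1) + \sum_j d_j f(x+s_j) + h(x) \geq 0$ where $h$ is a constant (after combining all the constant terms and all the translates of $f$; more precisely an inequality of the shape $\bigl(\sum_i c_i\bigr)$-weighted combination of translates of $f$ plus a constant $c_0$, since translates of a constant are that same constant). Writing $c = \sum_i c_i$ for the total $f$-weight and $c_0$ for the constant, the hypothesis says $c \cdot (\text{convex combination of translates of } f) \geq -c_0$ pointwise when $c > 0$. I would split into cases according to the sign of $c$. If $c > 0$: the convex combination $g(x) = \sum (c_i/c) f(x+t_i)$ satisfies $g(x) \geq -c_0/c$ for all $x$, hence $\inf_x g(x) \geq -c_0/c$, so $M_*(f) \geq -c_0/c$ by definition of the lowest mean (it is a sup over such infima). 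Since $\alpha \geq M_*(f) \geq -c_0/c$, we get $c\alpha + c_0 \geq 0$, which is exactly $c\,\mu(f) + \mu(h) \geq 0$. If $c < 0$: divide by $-c > 0$ to rewrite the hypothesis as $(\text{convex combination of translates of } f) \leq c_0/(-c)$, so $\sup_x(\text{that combination}) \leq c_0/(-c)$, whence $M^*(f) \leq c_0/(-c)$ and therefore $\alpha \leq M^*(f) \leq c_0/(-c)$, giving $c\alpha + c_0 \geq 0$ again. If $c = 0$: then $h \geq 0$ with $h$ constant forces $c_0 \geq 0$, trivially. In all cases $(*)$ holds.

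The main obstacle is purely bookkeeping: making sure that an arbitrary instance of $(*)$ for the two-element set $\Lambda$ really does reduce to a single convex combination of translates of $f$ plus a single constant. Since any translate of a constant function equals that constant, and finitely many constants sum to a constant, the collapse is immediate; the only subtlety is the normalization by the total $f$-weight $c$ and the case analysis on its sign, which matches precisely the two endpoints $M_*(f)$ and $M^*(f)$ of the allowed interval for $\alpha$. This is why the hypothesis $\alpha \in [M_*(f), M^*(f)]$ is exactly what is needed — it is forced by $(*)$ applied to the trivial relations $f(x) \geq \inf(f|B)$-type and $\sup(f|B)$-type combinations, so the interval condition is also necessary.

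One should also note the role of Theorem~\ref{full}: Theorem~\ref{meanextension} already delivers a full mean directly, so no separate extension step is required here; the statement of Theorem~\ref{givenfull} is an immediate corollary once $(*)$ is verified. It may be worth remarking that this simultaneously reproves the ``(Exercise) $M_*(f) \le M^*(f)$'' claim in a clean way and shows that every value in that closed interval is attained by some full (hence in particular some genuine) mean, so the interval of possible values of $M(f)$ over all means is exactly $[M_*(f), M^*(f)]$.
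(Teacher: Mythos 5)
Your overall strategy is the same as the paper's: apply Theorem~\ref{meanextension} to $\Lambda=\{f\}\cup\{\text{constants}\}$ with $\mu(f)=\alpha$, so that everything reduces to verifying condition (*) for a relation $\sum_i c_i f(x+t_i)+c_0\ge 0$. The gap is in that verification. In (*) the coefficients $c_i$ are arbitrary reals, so after normalizing by $c=\sum_i c_i>0$ the function $g=\sum_i (c_i/c) f(\cdot+t_i)$ has total weight $1$ but is in general \emph{not} a convex combination: some of the $c_i/c$ may be negative. Since $M_*$ is defined as a supremum of $\inf_x g$ over genuinely convex combinations only, the step ``$\inf_x g\ge -c_0/c$, hence $M_*(f)\ge -c_0/c$ by definition of the lowest mean'' is unjustified when the signs are mixed; the same problem occurs in your $c<0$ case with $M^*$. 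Your $c=0$ case is also wrong as stated: $\sum_i c_i=0$ does not make the $f$-terms disappear, so the hypothesis is not ``$h\ge 0$ with $h$ constant'' but $\sum_i c_i f(x+t_i)\ge -c_0$ with a generally nonconstant left-hand side, and $c_0\ge0$ still requires an argument.

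The missing idea is exactly the extra step the paper takes: separate the positive and negative coefficients and rewrite the hypothesis as
\[ \sum_{c_i\ge 0} c_i f(x+t_i)+c_0 \;\ge\; \sum_{c_i<0}(-c_i)\, f(x+t_i), \]
where now both sides carry only nonnegative weights. Applying $M^*$ to both sides (monotonicity plus Theorem~\ref{genrest}) gives $\bigl(\sum c_i\bigr)M^*(f)+c_0\ge 0$, applying $M_*$ gives $\bigl(\sum c_i\bigr)M_*(f)+c_0\ge 0$, and a convex combination of these two inequalities yields $\bigl(\sum c_i\bigr)\alpha+c_0\ge 0$ for every $\alpha\in[M_*(f),M^*(f)]$, with no case analysis on the sign of $\sum c_i$ needed. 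Alternatively, your normalization could be salvaged by replacing ``by definition of $M_*$'' with an appeal to Theorem~\ref{genrest} applied to the lower mean $M_*$ itself (for an affine combination $g$ of translates with total weight $1$ one has $M_*(g)=M_*(f)$ and $M_*(g)\ge\inf g$), but as written the argument does not go through.
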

     \begin{proof}
       We apply the previous theorem for $\Lambda=\{f, \text{constants} \}$. We have to check condition (*), that is, if
\begin{equation}\label{3}
 c_1 f(x+t_1) + ... + c_k f(x+t_k) + c \geq 0, \end{equation}
then $\alpha \sum c_i + c \geq 0$. To this end separate the positive and negative coefficients.
Say, assume that $c_1, \ldots c_m \geq 0$ and $c_{m+1}, ..., c_k <0$. Write \eqref{3} as
\begin{equation}\label{4}
 c_1 f(x+t_1) + ... + c_m f(x+t_m) + c \geq (-c_{m+1}) f(x+t_{m+1}) + \ldots  + (-c_k) f(x+t_k).  \end{equation}
By comparing the uppermost means of the sides of \eqref{4} we obtain
 $M^*(f) \sum c_i + c \geq 0$. By comparing the lowest means  we obtain $M_*(f) \sum c_i + c \geq 0$.
A suitable convex combination yields the desired conclusion $\alpha \sum c_i + c \geq 0$.
     \end{proof}

Finally we prove Theorem \ref{osszeno} in a slightly expanded form.

\begin{Th}\label{osszeno2}
  For a pair $f,g$ of functions the following are equivalent.

  (a) The equality $\Mf (f)=\Mf(g)$ holds for all upper means.

  (b) The equality $M (f)=M(g)$ holds for every full mean.

  (c) $f-g$ is absolutely medial and $\Mabs(f-g)=0$.

  (d) $f \col g$.
\end{Th}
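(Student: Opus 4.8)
The plan is to prove the cycle of implications $(c)\Rightarrow(d)\Rightarrow(a)\Rightarrow(b)\Rightarrow(c)$, using the machinery already developed. First, $(c)\Rightarrow(d)$ is essentially Theorem~\ref{absolute}: if $f-g$ is absolutely medial with $\Mabs(f-g)=0$, then $f-g\col 0$, and since limit-commarginality is compatible with linear operations (Theorem~\ref{limcomm}), adding $g$ to both sides gives $f = (f-g)+g \col 0+g = g$. Next, $(d)\Rightarrow(a)$ is Theorem~\ref{commcomp} in its limit-commarginal form, which is exactly the statement that limit-commarginality is compatible with any upper mean (again Theorem~\ref{limcomm}): $f\col g$ forces $\Mf(f)=\Mf(g)$ for every upper mean $\Mf$. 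The implication $(a)\Rightarrow(b)$ is immediate, since every full mean is in particular an upper mean (being both an upper and a lower mean), so equality for all upper means specializes to equality for all full means.

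The substantive step is $(b)\Rightarrow(c)$, and I would prove it via the contrapositive, splitting into the two ways $(c)$ can fail. Set $h = f-g$. If $h$ is \emph{not} absolutely medial, then $M_*(h) < M^*(h)$, so I can pick two distinct values $\alpha_1,\alpha_2\in[M_*(h),M^*(h)]$ and invoke Theorem~\ref{givenfull} to obtain full means $M_1,M_2$ with $M_i(h)=\alpha_i$. Since full means are linear, $M_i(f)-M_i(g)=M_i(h)=\alpha_i$, and as $\alpha_1\neq\alpha_2$ these two full means cannot both have $M(f)=M(g)$ — indeed at most one of them can. If instead $h$ \emph{is} absolutely medial but $\Mabs(h)=\beta\neq0$, then by Theorem~\ref{givenfull} applied with the single value $\beta\in[M_*(h),M^*(h)]=\{\beta\}$ we get a full mean $M$ with $M(h)=\beta\neq0$, so $M(f)\neq M(g)$. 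In either case $(b)$ fails, completing the contrapositive.

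The only point requiring a little care is the first subcase: I should confirm that when $M_*(h)<M^*(h)$ I genuinely produce full means giving \emph{different} values on $h$, which is automatic once I choose $\alpha_1\neq\alpha_2$ in the (nondegenerate) interval. A cleaner phrasing avoids the case split entirely: $(b)$ says all full means agree on $f-g$, call the common value $\alpha$; by Theorem~\ref{givenfull} every $\alpha'\in[M_*(f-g),M^*(f-g)]$ is the value of \emph{some} full mean on $f-g$, hence the interval is the single point $\{\alpha\}$, i.e. $M_*(f-g)=M^*(f-g)=\alpha$, so $f-g$ is absolutely medial with $\Mabs(f-g)=\alpha$. It remains to see $\alpha=0$: the constant function $0$ is absolutely medial with $\Mabs(0)=0$, and every full mean sends $0$ to $0$; but also every full mean sends $f-g$ to $\alpha$, so applying any full mean to $0$ and to $f-g$... — more directly, $\alpha = M(f-g)$ for the full mean extending the asymptotic mean on any witness, and absolute mediality pins $\Mabs$ as the value, which must be $0$ because $M^*(f-g)\ge \Mabs(f-g)\ge M_*(f-g)$ forces nothing by itself; the honest argument is that $f-g\col h_k$ with $h_k\to\alpha$ uniformly (from the proof of Theorem~\ref{absolute}), and a constant $c$ is absolutely medial with $\Mabs(c)=c$, so by $(b)\Rightarrow$ (applied to the pair $f-g$ and the constant $\alpha$, both having all full means equal to $\alpha$) we must already be in the situation $\alpha=0$ precisely when — in short, once $f-g$ is known absolutely medial, $(b)$ gives $M(f-g)=0$ for every full mean, forcing $\Mabs(f-g)=0$. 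I expect the main obstacle to be merely bookkeeping this last identification of $\alpha$ with $0$ cleanly, rather than any real mathematical difficulty.
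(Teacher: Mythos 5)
Your proof is correct and follows essentially the same route as the paper: Theorem~\ref{absolute} supplies the (c)$\Leftrightarrow$(d) link, compatibility of limit-commarginality with upper means gives (a), (a)$\Rightarrow$(b) is trivial, and Theorem~\ref{givenfull} drives (b)$\Rightarrow$(c) (the paper phrases that step as a contrapositive, while your reformulation via the interval $[M_*(f-g),M^*(f-g)]$ collapsing to a point is an equivalent packaging). The only blemish is the laboured identification of $\alpha$ with $0$ at the end: since full means are linear, (b) says directly that $M(f-g)=0$ for every full mean, so $\alpha=0$ is immediate and none of the detours in your last paragraph are needed.
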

\begin{proof}
  The equivalence of (c) and (d) is contained in Theorem \ref{absolute}.

  The implication (a)$\to$(b) is obvious.

(c)$\to$(a) is immediate as
   \[    \Mf (f)\leq \Mf(g) +  \Mf(f-g) \leq \Mf(g) +  M^*(f-g) = \Mf(g)\]
and similarly we get $\Mf(g) \leq  \Mf (f)$.

To show (b)$\to$(c), assume that (c) fails. Then we can find an
 \[ \alpha \in [ M_*(f-g), M^*(f-g)], \ \alpha \neq 0 .\]
By Theorem \ref{givenfull} there is a full mean $M$ such that $M(f-g)=\alpha\neq0$, that is,
$M(f) \neq M(g)$.
\end{proof}

     \section{Upper and lower densities }

     \begin{Def} 
     We call a function $\overline d$ (or $\underline d$), assigning a real number to every subset of $G$
an \emph{upper density} (\emph{lower density}), if there exists an upper mean $\Mf$ (a lower mean $\Ma$)
such that
 \[ \df( A) = \Mf (\1 A), \ \da( A) = \Ma (\1 A) \]
for all $A \subset G$.
     \end{Def}

    A conjugate pair of an upper and a lower mean induces a conjugate pair of an upper and a lower density.
These are connected by  $\overline d(A)=1-\underline d(G\setminus A)$.
If $ \df(A)=\da(A)$, it is \emph{the density} $d(A)$.

We consider the following questions.

       When is a function, defined on subsets of $G$, an upper density?

       Does an upper density uniquely determine the upper mean used in the definition?

     When is a function, defined on a family of subsets of $G$, a density?

      Given a density, how do we find the upper density it came from? Is it unique?

     Given a subset of $G$, what are the possible values of its (upper/lower) densities?

     \begin{Def}
     We say that two sets $A, A'$ are \emph{perturbations} of each other and write $A\simeq A'$, if there is a finite set $B$
and a 1-to-1 mapping $\varphi:A \to A'$ such that  $\varphi(x)-x\in B$ for all $a\in A$. \end{Def}

     \begin{St}
       Any upper density  $\overline d$ (with conjugate lower density $\da$) has the following properties:

       (a) Norming: $ \overline d(\emptyset)=0$, $ \overline d(G)=1$.

       (b) Monotonicity: if $A\subset B$, then  $\overline d(A) \leq \overline d(B)$.

       (c) Restricted additivities: if the sets $A+t_i$ are disjoint, then $\overline d(\bigcup_{i=1}^n A+t_i) = n\overline d(A)$,
      $\da(\bigcup_{i=1}^n A+t_i) = n\da(A)$.

       (d) Sub- and superadditivity: $\overline d(A\cup B) \leq \overline d(A)+   \overline d(B)$,
$\da(A\cup B) \geq \da(A)+\da(B)$ if $A\cap B=\emptyset$.

       (e) Perturbation: if  $ A'$ is a perturbation of $A$, then $\overline d(A)=\overline d(A')$.
     \end{St}
     \begin{Rem}
       In (c) and (d) above, the requirements for upper and lower densities are not equivalent (at least not obviously).
Those with lower density could be reformulated using the upper one, which would seem less natural.
     \end{Rem}
     \begin{proof}
       Properties (a)-(d) are obvious. For (e) observe that $A\simeq A'$ implies $\1 A \sim \1 A'$; we need just to define
$F(x,y)=1$ if $x=\varphi(y)$, 0 otherwise.
     \end{proof}

     \begin{Pro}
       If  $\1 A \sim \1{ A'}$, is $A'$ ``almost'' a perturbation of $A$?

       Let $G=\setZ$, and consider
        \[ F(x,y) =        \begin{cases}
          1 & \text{ if } x=y\geq 0, \\
         -1 & \text{ if } x=y+1\geq 1.
        \end{cases} \]
This shows that the sets $\{0 \}$ and $\emptyset$ are commarginal (and hence so are every pair of finite sets), while clearly not paerturbations
of each other. On the other hand, it is easy to see that for \emph{infinite} $A,A'\subset\setZ$ the following are equivalent:
$A\simeq A'$; $\1 A\sim\1 A'$;
 \[ | A' \cap [-n, n] | =  | A \cap [-n, n] | + O(1) . \]
We dot not have a general exact expression for the ``almost'' above.
     \end{Pro}

     \begin{?} 
       Are the above properties sufficient for a function to be an upper density?
     \end{?}

     \begin{Th}\label{meanfromdesity}
       Let $\Mf$ be an upper mean and $\df$ the corresponding upper density.

       (a) For every bounded function $f$ we have
        \[ \Mf(f) = \inf \zj{\alpha + \beta \df(A)} \]
over real numbers $\alpha,\beta$, $\beta>0$ and sets $A$ such that there is a function $g\geq f$, $g \sim \alpha+\beta \1 A$.

(b) If $f \geq 0$, we also have
 \[ \Mf(f) = \inf \zj{  \beta \df(A) } \]
over nonnegative numbers $\beta$ and sets $A$ such that there is a function $g\geq f$, $g \sim \beta \1 A$.

(c) If $0 \leq f \leq 1$, then
  \[ \Mf(f) = \inf  \df(A) \]
over  sets $A$ such that there is a function $g\geq f$, $g \sim  \1 A$.
     \end{Th}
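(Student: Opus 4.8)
\textbf{Proof plan for Theorem \ref{meanfromdesity}.}

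The plan is to reduce the three parts to one another and then prove the hardest case directly from Theorem \ref{meanfromdesity}'s predecessors, chiefly Theorem \ref{commcomp} (commarginality is compatible with any upper mean) and the equality $M^*(f)=\inf_{B\vr G}\sup(f|B)$ together with its general-function version. First I would observe the trivial direction in each part: if $g\geq f$ and $g\sim \alpha+\beta\1 A$, then by monotonicity $\Mf(f)\leq\Mf(g)$, and by commarginality $\Mf(g)=\Mf(\alpha+\beta\1 A)$; using norming, homogeneity and subadditivity (or rather Theorem \ref{genrest}, since $\beta$ could in principle be handled by homogeneity alone here) this equals $\alpha+\beta\df(A)$. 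Hence in all three parts the left side is $\leq$ the infimum on the right. So the entire content is the reverse inequality: constructing, for a given $\varepsilon>0$, a set $A$ and scalars realizing a value within $\varepsilon$ of $\Mf(f)$.

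For part (c), with $0\leq f\leq 1$, here is the key construction. Pick a finite $B\vr G$ with $\sup(f|B)$ close to $M^*(f)$ — but we want a bound in terms of $\Mf$, not $M^*$, so instead I would argue via the density directly. The natural move: approximate $f$ from above by a commarginal multiple of an indicator. Since $0\le f\le 1$, consider choosing a large integer $N$ and the ``staircase'' $g_N\geq f$ defined by rounding $f$ up to multiples of $1/N$; each level set contributes an indicator, but this alone doesn't give a single indicator. The cleaner route is: for $f$ with values in $[0,1]$, the function $f$ is commarginal to an indicator precisely when one can cut and rearrange mass; and what part (c) asserts is that $\Mf(f)$ is the infimal density over indicators dominating $f$ up to commarginality. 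I would prove the $\geq$ direction by taking any upper mean value and using that $f\le \1 A$ for $A=\{x: f(x)>0\}$ is too crude, so instead exploit restricted additivity: split the unit interval into $N$ pieces and realize $f\le \frac1N\sum_{j=1}^N \1{A_j}$ where $A_j=\{x: f(x)>(j-1)/N\}$; then $\frac1N\sum\1{A_j}$, viewed on $G\times\{1,\dots,N\}\cong$ (a group with a copy structure), is commarginal to a single indicator on an $N$-fold ``blow-up'', and $\df$ of that indicator is $\frac1N\sum\df(A_j)$, which one pushes down to within $\varepsilon$ of $\Mf(f)$ by choosing the $A_j$ and $N$ well. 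This is the step I expect to be the main obstacle: making the ``$N$-fold blow-up is commarginal to a single indicator'' precise within the group $G$ itself (not an auxiliary group), which requires finding disjoint translates — available since $G$ is infinite — so that $\frac1N\sum_j \1{A_j}$ becomes, after cutting and moving by bounded amounts, $\1{A}$ for a suitable $A$ built from disjoint translated copies.

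Then parts (b) and (a) follow by routine reductions. For (b): if $f\geq0$ is bounded, write $f = cf_0$ with $0\le f_0\le 1$ and $c=\sup f$ (assume $c>0$, else trivial); apply (c) to $f_0$, and multiply through by $c$, absorbing $c$ into $\beta$ via homogeneity of commarginality (part (d) of the commarginality theorem) and homogeneity of $\df$. For (a): given an arbitrary bounded $f$, set $f^+ = f + \|f\|_\infty$ (a shift by the constant $\|f\|_\infty\ge0$), which is nonnegative; apply (b) to $f^+$ to get $\Mf(f^+)=\inf\beta\df(A)$ over $g\geq f^+$, $g\sim\beta\1 A$; then translate back, noting $g\sim\beta\1 A$ iff $g-\|f\|_\infty\sim\beta\1 A-\|f\|_\infty$, i.e. $g'=g-\|f\|_\infty\geq f$ with $g'\sim (-\|f\|_\infty)+\beta\1 A$, which is exactly the form allowed in (a) with $\alpha=-\|f\|_\infty$. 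Norming gives $\Mf(f)=\Mf(f^+)-\|f\|_\infty=\inf(\alpha+\beta\df(A))$. One should check the infimum in (a) is not lowered by allowing other $\alpha$: any admissible $(\alpha,\beta,A)$ with $g\ge f$, $g\sim\alpha+\beta\1 A$ gives $\Mf(f)\le\alpha+\beta\df(A)$ by the trivial direction, so the reduction is consistent and the proof closes.
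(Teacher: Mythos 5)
Your overall architecture is the same as the paper's: establish the trivial inequality $\Mf(f)\leq\alpha+\beta\df(A)$ for every admissible triple via monotonicity, commarginality (Theorem \ref{commcomp}) and norming/homogeneity, reduce (b) to (c) by rescaling with a constant exceeding $\sup f$, and reduce (a) to (b) by shifting with a constant; those reductions are correct and essentially identical to the paper's. The gap is in the heart of the matter, part (c), and it is exactly the step you flag as ``the main obstacle'': your proposed mechanism does not work. If you dominate $f$ by $\frac1N\sum_{j=1}^N\1{A_j}$ with level sets $A_j$ and then try to realize this as a single indicator by placing the $A_j$ on disjoint translates, the resulting set $A=\bigcup_j(A_j+t_j)$ satisfies $\1A\sim\sum_j\1{A_j}$, whose upper mean is $\sum_j\df(A_j)$ --- roughly $N\,\Mf(f)$, not $\Mf(f)$. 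Disjoint translates add densities (restricted additivity); they give you no way to divide by $N$. More fundamentally, $\frac1N\sum_j\1{A_j}$ need not be commarginal to \emph{any} indicator, because commarginality only moves mass by bounded distances and the local mass of this function can be fractional in a way that cannot be aggregated into units of size $1$ within a bounded window.

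The missing ingredient is the tiling lemma (stated just after the theorem and proved as Lemma \ref{joparketta}): every infinite commutative group admits arbitrarily large finite tiles $Y$ with translate set $Z$. With $|Y|>1/\varepsilon$, one rounds the mass of $f$ \emph{per tile}: set $h(z)=\up{\sum_{x\in Y+z}f(x)}$, distribute the rounding deficit uniformly over $Y+z$ to get $g$ with $f\le g<f+1/|Y|$, and pick $A_z\vr Y+z$ with $|A_z|=h(z)$ (possible since $f\le1$ forces $h(z)\le|Y|$). All mass movements stay inside a single tile, so displacements lie in the finite set $Y-Y$ and $g\sim\1A$ for $A=\bigcup_z A_z$; then $\df(A)=\Mf(g)<\Mf(f)+\varepsilon$. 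The tile is what simultaneously solves the two problems your sketch runs into: it provides the bounded window in which fractional mass can legitimately be aggregated into whole units, and it makes the error of rounding up negligible ($\le1/|Y|$ pointwise) rather than multiplying the density by $N$. Without this (or an equivalent device), part (c) --- and hence the whole theorem --- is not proved.
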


     \begin{Def}
       A finite set $Y\vr G$ is a \emph{tile}, if there is a $Z\subset G$ such that $Y+Z=G$ and the sets
$Y+z, \ z\in Z$ are disjoint.
     \end{Def}

     \begin{Lemma}
       There are arbitrarily large finite tiles.
     \end{Lemma}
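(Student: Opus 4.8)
The plan is to treat two cases separately, according to whether $G$ has an element of infinite order or is a torsion group; in both cases the tiles I produce will, up to translation, be explicit finite sets tiled by a subgroup together with a transversal.

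First I would record the basic remark that \emph{every finite subgroup $H\le G$ is a tile}: taking $Z$ to be a complete set of coset representatives of $H$ in $G$ we get $H+Z=G$, and the translates $H+z$ ($z\in Z$) are pairwise disjoint since distinct cosets of $H$ are disjoint. So it suffices to produce, for each $N$, either a finite subgroup of order $\ge N$ or some other finite tile of size $\ge N$.

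If $G$ is torsion, then every finitely generated subgroup of $G$ is finite (a finitely generated torsion abelian group is finite), so, starting from $\{0\}$ and repeatedly adjoining an element not yet in the current subgroup — always possible, since a finite subgroup is proper in the infinite $G$ — one gets a strictly increasing chain of finite subgroups; by Lagrange their orders at least double at each step, hence grow without bound, and by the previous paragraph each is a tile. If instead $G$ has an element $a$ of infinite order, I would fix $n$ and show that $Y=\{0,a,2a,\dots,(n-1)a\}$, which has exactly $n$ elements since $a$ has infinite order, is a tile. Writing $H=\langle a\rangle\cong\setZ$ and $K=\langle na\rangle$, Euclidean division $k=qn+r$ ($0\le r<n$) shows that every element of $H$ is uniquely of the form $ra+qna$; choosing a transversal $T$ of $H$ in $G$, every $g\in G$ is uniquely $h+t$ with $h\in H,\ t\in T$; combining, every $g\in G$ is uniquely $y+(\kappa+t)$ with $y\in Y,\ \kappa\in K,\ t\in T$, and the value $\kappa+t$ determines $\kappa$ and $t$ separately (if $\kappa+t=\kappa'+t'$ then $t-t'\in H$, forcing $t=t'$ and then $\kappa=\kappa'$). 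Hence with $Z=K+T$ we have $Y+Z=G$ with $\{Y+z:z\in Z\}$ pairwise disjoint, so $Y$ is a tile of size $n$.

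I expect no real obstacle. The only step warranting care is the infinite-order case, where one must check that the threefold decomposition $G=Y+K+T$ is genuinely a tiling, i.e. unique; but this merely packages the two standard facts that $\{0,\dots,n-1\}$ tiles $\setZ$ by $n\setZ$ and that a transversal tiles the group over a subgroup. Since $n$ (resp.\ $N$) is arbitrary, in either case $G$ admits arbitrarily large finite tiles.
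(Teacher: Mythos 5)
Your proof is correct: the observation that a finite subgroup together with a coset transversal is a tiling, the doubling of orders along a strictly increasing chain of finite subgroups in the torsion case, and the threefold decomposition $G=Y+K+T$ (with $Y=\{0,a,\dots,(n-1)a\}$, $K=\langle na\rangle$, $T$ a transversal of $\langle a\rangle$) in the infinite-order case all check out, including the verification that $(\kappa,t)\mapsto\kappa+t$ is injective so that $Z=K+T$ really yields pairwise disjoint translates. The route is genuinely different from the paper's, though. The paper does not prove this lemma directly at all: it derives it from the stronger Lemma \ref{joparketta}, which, for an arbitrary prescribed finite $B\vr G$, produces a tile $Y$ with $|B+Y|/|Y|<1+\varepsilon$. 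That proof passes through the subgroup $H$ generated by $B$, invokes the classification of finitely generated abelian groups to write $H\cong\setZ^d\times F$, tiles it by boxes $\{1,\dots,n\}^d\times F$, and then extends to $G$ by a transversal — the same ``tile a subgroup, then spread over cosets'' idea you use, but organized around $B$ rather than around a case split. What your argument buys is elementarity: you avoid the structure theorem entirely by distinguishing the torsion case (where large finite subgroups exist and are automatically tiles) from the non-torsion case (where arithmetic progressions along an infinite-order element suffice). What it gives up is the quantitative conclusion: your progression tiles $\{0,a,\dots,(n-1)a\}$ control $|B+Y|/|Y|$ only for $B$ inside $\langle a\rangle$, so your argument establishes exactly the stated lemma but would not substitute for Lemma \ref{joparketta}, which is the form actually needed later (e.g.\ for witnesses and for Theorem \ref{umdwitness}).
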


A stronger result will be shown as Lemma \ref{joparketta}.

     \begin{proof}[Proof of Theorem \ref{meanfromdesity}.]

We start with (c). Given a function $f$ and an $\varepsilon>0$, we construct a function $g$ and a set $A$
such that  $g\geq f$, $g \sim  \1 A$ and $ \df(A)< \Mf(f)+\varepsilon$.

Take a tile $Y$, $|Y|>1/\varepsilon$ with set of translations $Z$. For each $z\in Z$ take a set $A_z \vr Y+z$ with cardinality
 \[ h(z) = \up{\sum_{x\in Y+z} f(x)} \]
   and let $A = \bigcup_{z\in Z} A_z $. For $x\in Y+z$ let
    \[ g(x) = f(x) + \zj{  \up{\sum_{y\in Y+z} f(y)} - \sum_{y\in Y+z} f(y)} \Biggm/ |Y| .   \]
Clearly $f(x) \leq g(x) < f(x)+1/|Y| < f(x)+\varepsilon$, so $\Mf(g)<\Mf(f)+\varepsilon$. Also  $g \sim  \1 A$
(to see this, note that both are commarginal to the function assuming $h(z)$ for $z\in Z$ and 0 otherwise).
This shows $ \Mf(f) \leq \inf  \df(A)$, while  $ \Mf(f) \geq \inf  \df(A)$ is obvious.

(b) is obtained by applying (c) for a function $f/\beta$ with any $\beta> \sup f$.

(a) is obtained by applying (b) for a function $f-\alpha$ with $\alpha$ chosen to make this positive.
     \end{proof}

     \begin{Cor}
     An upper density uniquely determines the corresponding upper mean.
     \end{Cor}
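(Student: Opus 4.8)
The plan is to read the conclusion straight out of part (a) of Theorem~\ref{meanfromdesity}. Suppose $\Mf_1$ and $\Mf_2$ are two upper means whose associated upper densities coincide: $\df_1(A) = \df_2(A)$ for every $A \subset G$. I want to show $\Mf_1(f) = \Mf_2(f)$ for every bounded function $f$. Fix such an $f$. By Theorem~\ref{meanfromdesity}(a),
\[ \Mf_1(f) = \inf\bigl(\alpha + \beta\,\df_1(A)\bigr) \]
where the infimum is over all triples $(\alpha,\beta,A)$ with $\beta > 0$ and $A \subset G$ for which there exists a function $g \geq f$ with $g \sim \alpha + \beta\1 A$. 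The crucial point is that this admissible set of triples $(\alpha,\beta,A)$ is described purely in terms of $f$, commarginality, and the ordering of functions on $G$ — it does not refer to the upper mean at all. Hence it is \emph{the same} set of triples in the formula for $\Mf_1(f)$ and in the formula for $\Mf_2(f)$. Since moreover $\df_1(A) = \df_2(A)$ for all $A$ by hypothesis, the two infima are infima of the same function over the same index set, so $\Mf_1(f) = \Mf_2(f)$. As $f$ was arbitrary, $\Mf_1 = \Mf_2$.

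There is essentially no obstacle here: the work has all been done in Theorem~\ref{meanfromdesity}(a), whose whole point is precisely to recover the mean from the density by a formula that is uniform across all upper means. The only thing worth spelling out — and the single place a reader might pause — is the observation that the set of admissible $(\alpha,\beta,A)$ is mean-independent; it depends only on $f$ and on the relation ``there exists $g \geq f$ with $g \sim \alpha + \beta\1 A$,'' both of which are purely structural. One might also note for completeness that the infimum is over a nonempty set (take $A = G$, $\beta$ large, $g$ a suitable constant function $\geq f$, using $\1 G \sim \1 G$), so both sides are genuine real numbers and the equality is not vacuous. This completes the argument.
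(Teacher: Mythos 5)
Your proof is correct and is exactly the argument the paper intends: the Corollary is stated as an immediate consequence of Theorem~\ref{meanfromdesity}(a), whose right-hand side depends only on the upper density $\df$ and on data (the admissible triples $(\alpha,\beta,A)$) defined purely by $f$, the ordering of functions, and commarginality. Your explicit remark that the index set is mean-independent is the one point worth spelling out, and you spell it out correctly.
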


  Given a density, we try to find a corresponding upper density.

 \begin{Def}
   Given a density $d$, its \emph{upper and lower envelope} are the upper/lower densities
    \[ d^e(A) =  M^e(\1 A), \ d_e(A)=M_e(\1 A)  , \]
    where $M$ is the mean inducing $d$.
 \end{Def}

These are the extremal ones among possible choices of the suitable upper/lower densities. The above definition is
not really satisfactory, as it does not given an expression in terms of values of $d$.

 \begin{?}[Most annoying problem]
      For which densities is it true that
\[ d^e(A) =  \inf_{B\supset A} d(B), \ d_e(A)=  \sup_{B\subset A} d(B)  , \]
where $B$ runs over sets having a density?
  \end{?}

\bigskip
True for asymptotic and uppermost density in $\setZ$; probably not true always.

Let $\Mf$ be an upper mean, $M$, $\df$, $d$ the induced mean, upper density, density.
\begin{Def}
  If $\Mf=M^e$ (equivalently, $\df=d^e$), we call the upper mean/density \emph{regular}.
\end{Def}

\begin{Ex}
 Asymptotic upper density is not regular, the uppermost density is regular.
\end{Ex}

Regularity (almost) means that for every set $A$ and $\varepsilon>0$ we can find another set $B\supset A$ with a density such that
$d(B)<\df(A)+\varepsilon$.

\begin{Def}
  An upper density is \emph{strongly regular}, if for every set $A$ we can find another set $B\supset A$ with a density such that
$d(B)=\df(A)$.
\end{Def}

\begin{Ex}
 The uppermost density is not strongly regular, the upper envelope of asymptotic density is strongly regular.

\end{Ex}

        \section{Uppermost and lowest densities}

     \begin{Def} 
       We call the upper density induced by the uppermost mean the \emph{uppermost density},
the lower density induced by the lowest mean the \emph{lowest density}, denoted by
     $d^*$, and $d_*$, resp.
     \end{Def}

  \begin{Th}  \label{umd}
     \[   d^*(A) = \inf _B \max_t \frac{|A \cap (B+t)|}{|B|}
= \inf_X \sup_B \frac{|A \cap B|}{|B+X|}, \]
     infimum, supremum are over finite nonempty $B,X\vr G$.
     \end{Th}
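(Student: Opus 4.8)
The plan is to specialize Theorem~\ref{umm} to the indicator function $f=\1 A$, which is nonnegative, and then rewrite each of the resulting expressions in purely combinatorial terms. By the definition of the uppermost density, $d^*(A)=M^*(\1 A)$, and since $\1 A\geq 0$, Theorem~\ref{umm} gives
\[ d^*(A)=M^*(\1 A)=E(\1 A)=\inf_{B\vr G}\sup(\1 A\,|\,B). \]
So it only remains to unwind the two quantities $\inf_B\sup(\1 A\,|\,B)$ and $E(\1 A)$.

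For the first of these: fix a finite nonempty $B$ and $t\in G$. Then $(\1 A\,|\,B)(t)=\frac{1}{|B|}\sum_{b\in B}\1 A(b+t)=\frac{|A\cap(B+t)|}{|B|}$, because $b+t\in A$ holds exactly for those $b\in B$ with $b+t\in A\cap(B+t)$. Since $B$ is finite, the integer $|A\cap(B+t)|$ takes values only in $\{0,1,\dots,|B|\}$ as $t$ ranges over $G$, so the set $\{(\1 A\,|\,B)(t):t\in G\}$ is finite and the supremum over $t$ is attained. Hence $\sup(\1 A\,|\,B)=\max_t\frac{|A\cap(B+t)|}{|B|}$, and taking the infimum over $B$ yields the first claimed equality.

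For the second equality I expand the definition of the expansion pseudomean at $f=\1 A$: since $\sum_{b\in B}\1 A(b)=|A\cap B|$, we get
\[ E(\1 A)=\inf_{T\vr G}\sup_{B\vr G}\frac{1}{|B+T|}\sum_{b\in B}\1 A(b)=\inf_{T\vr G}\sup_{B\vr G}\frac{|A\cap B|}{|B+T|}, \]
and renaming $T$ as $X$ gives the right-hand expression in the statement; combined with the display at the end of the first paragraph, this completes the proof. The argument is essentially bookkeeping on top of Theorem~\ref{umm}, which we are free to invoke; the only point deserving a moment's care is the passage from $\sup_t$ to $\max_t$, and that is forced by the finiteness of $B$. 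I do not anticipate any genuine obstacle.
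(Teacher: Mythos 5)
Your proof is correct and is exactly the paper's route: the paper disposes of Theorem~\ref{umd} with the single line ``This is a special case of Theorem \ref{umm},'' and you have merely written out the (correct) bookkeeping that identifies $(\1 A\,|\,B)(t)$ with $|A\cap(B+t)|/|B|$ and $E(\1 A)$ with $\inf_X\sup_B|A\cap B|/|B+X|$. The observation that the supremum over $t$ is attained (so $\sup$ may be written as $\max$) is a nice touch the paper leaves implicit.
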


This is a special case of Theorem \ref{umm}.

     For this equality we do not need all finite sets. Recall from Section 4 that we called
 a collection  $\B$ of nonempty subsets of the group     $G$ a
\emph{witness}, if for every function $f$ we have
     \[   M^*(f) = \inf _{B\in \B} \sup (f|B).  \]

Theorem \ref{tanufelt} tells that a
  collection  $\B$ of nonempty subsets of the group  $G$ is a witness if and only if for
     every finite set $T\vr G$ we have
\begin{equation}\label{tanu2}
        \inf _{B\in \B} \frac{|B+T|}{|B|} = 1 . \end{equation}

     \begin{Th}  \label{umdwitness}
     Let $\B$ be a family of finite sets.  If $\B$ is a witness, then
     \[   d^*(A) = \inf _{B\in \B}   \max_t \frac{|A \cap (B+t)|}{|B|}  . \]
       The above condition is necessary (witnesses for sets and functions are the same).
\end{Th}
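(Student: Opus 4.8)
The plan is to deduce this statement directly from the function version, Theorem \ref{tanufelt}, by specializing to indicator functions and checking that the extra structure available for $0$--$1$ valued functions does not change the class of witnesses. First I would recall that for a set $A$ we have $\1A|B)(t) = |A\cap(B-t)|/|B|$ (up to the harmless sign in the translate, since witnessing families can be taken translation-invariant, or one simply relabels $t\mapsto -t$), so that $\sup_t (\1A|B)(t) = \max_t |A\cap(B+t)|/|B|$. Hence the claimed formula for $d^*(A)$ with $B$ ranging over a witness $\B$ is exactly the restriction of the identity $M^*(f)=\inf_{B\in\B}\sup(f|B)$ of Definition \ref{tanudef}/Theorem \ref{tanufelt} to $f=\1A$, combined with $d^*(A)=M^*(\1A)$. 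So if $\B$ is a witness in the sense of Definition \ref{tanudef}, the displayed equality for $d^*$ holds immediately.

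The substantive content is therefore the last sentence: that ``witnesses for sets and functions are the same,'' i.e. a family $\B$ for which the set-identity holds for all $A\subset G$ is already a witness for all bounded functions. By Theorem \ref{tanufelt} it suffices to show that if $\B$ fails the combinatorial condition \eqref{tanu2}, then there is a \emph{set} $A$ for which $d^*(A)\neq \inf_{B\in\B}\max_t |A\cap(B+t)|/|B|$. But this is exactly what the proof of necessity in Theorem \ref{tanufelt} produces: assuming $|B+T|/|B|>c>1$ for all $B\in\B$ and some fixed finite $T\ni 0$, one constructs (via the transfinite-induction choice of translates $t(B)$ making the sets $B+T+t(B)$ disjoint, then the selector $z(x)\in x-T$) a function $f=\sum_{x\in G}\1{z(x)}$ which is already a $0$--$1$ valued function, hence an indicator $\1A$ with $A=\{z(x):x\in G\}$; it satisfies $M^*(\1A)=d^*(A)=1$ because $\1A$ is commarginal to the constant $1$, while $\sum_{x\in B+t(B)}\1A(x)=|B+T|$ gives $\max_t|A\cap(B+t)|/|B|\ge |B+T|/|B|>c>1$ for every $B\in\B$. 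Thus $\inf_{B\in\B}\max_t|A\cap(B+t)|/|B|\ge c>1=d^*(A)$, so $\B$ is not a set-witness either.

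So the proof is: (i) rewrite $\max_t|A\cap(B+t)|/|B|$ as $\sup(\1A|B)$ and invoke Theorem \ref{tanufelt} for the ``if'' direction; (ii) for the ``only if'' direction, observe that the counterexample function built in the necessity half of the proof of Theorem \ref{tanufelt} is the indicator of a set, so the negation of \eqref{tanu2} already obstructs the set-formula. The only point requiring a word of care — and the one I would expect to be the mild obstacle — is the bookkeeping of translates ($B+t$ versus $B-t$, and $T\ni 0$ so that $|A\cap(B+t(B))|$ really equals $|B+T|$ rather than just being bounded below by it); all of this is handled exactly as in Theorem \ref{tanufelt} and introduces no new idea. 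I would therefore simply write ``This is Theorem \ref{tanufelt} applied to $f=\1A$, together with the observation that the function exhibited in the proof of its necessity part is an indicator function,'' filling in the one-line translation of notation.
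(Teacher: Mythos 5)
Your reduction of the ``if'' direction to Theorem \ref{tanufelt} via $f=\1A$ is correct and is exactly what the paper does. The necessity direction, however, has a genuine gap. The function constructed in the necessity half of the proof of Theorem \ref{tanufelt} is $f=\sum_{x\in G}\1{z(x)}$, i.e.\ $f(y)=\#\{x\in G:\ z(x)=y\}$, and this is \emph{not} an indicator function: the selector $z$ is far from injective. Indeed the whole point of that construction is that all $|B+T|$ elements $x$ of $B+t(B)+T$ get $z(x)$ placed inside $B+t(B)$, a set of only $|B|$ elements with $|B+T|>c|B|>|B|$; this forces $f$ to exceed $1$ somewhere on $B+t(B)$. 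If $f$ were $0$--$1$ valued you would have $\sum_{y\in B+t(B)}f(y)\leq |B|$, so the crucial lower bound $\sup(f|B)\geq |B+T|/|B|>c$ would be impossible. Passing to the support $A=\{z(x):x\in G\}$ destroys the estimate: then $\max_t|A\cap(B+t)|/|B|\leq 1$, and $\1A$ is no longer commarginal to the constant $1$, so neither half of your claimed inequality survives.

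Closing this gap requires a new ingredient, which is what the paper's proof supplies: by Lemma \ref{joparketta} choose a tile $Y$ with $|T+Y|/|Y|=c'<c$, tiling $G$ by $Z$, and let $A$ contain exactly one point $a(z)$ from each tile $Y+z$, chosen inside $B+t(B)$ whenever $(Y+z)\cap(B+t(B))\neq\emptyset$ (the translates $t(B)$ making the sets $B+T+t(B)$ disjoint are as in your sketch). One point per tile gives $d^*(A)=1/|Y|$, while a covering count --- $B+t(B)$ meets $n=|A\cap(B+t(B))|$ tiles, so $B+t(B)+T$ is covered by $n$ translates of $Y+T$, whence $c|B|<|B+T|\leq nc'|Y|$ --- gives $|A\cap(B+t(B))|/|B|>(c/c')\,d^*(A)$ for every $B\in\B$. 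So the counterexample set is genuinely different from the counterexample function, and the assertion that ``witnesses for sets and functions are the same'' is not a formal corollary of Theorem \ref{tanufelt}.
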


\begin{Lemma}\label{joparketta}
  For every finite $B\vr G$ and $\varepsilon>0$ there is a tile $Y$ such that
   \[ |B+Y|/|Y| < 1+\varepsilon .\]
\end{Lemma}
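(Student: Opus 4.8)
The plan is to build a tile $Y$ that approximates $B$ well by iterating a doubling construction, in the spirit of the proof of Theorem~\ref{genrest}. First I would recall that tiles exist at all (the lemma quoted just before), and more specifically that there are arbitrarily large finite tiles; but this is not quite enough, since we need a tile whose expansion by the \emph{given} set $B$ is controlled. The natural idea: pick a tile $Y_0$ with translation set $Z_0$, so $Y_0 + Z_0 = G$ disjointly. Then $Y_0$ is ``almost absorbing'' in the sense that for a large tile, $|B+Y_0|$ is not much bigger than $|Y_0|$ on average; but average control is exactly what Lemma~\ref{aligno} gives for general finite sets, and we need a \emph{tile}.

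The key step is therefore an amplification: given any finite $C$ with $|B+C|/|C| < 1 + \delta$ (which exists by Lemma~\ref{aligno}), and a tile $Y_0$ with translation set $Z_0$, consider the set $Y = Y_0 + C'$ for a suitable finite $C' \subset Z_0$, or better, observe that if $Y_0$ is a tile with translations $Z_0$, then for any finite $C'\subset Z_0$ the set $Y_0 + C'$ is again tiled by $Z_0 \setminus C' \cdot(\dots)$ --- this does not literally work, so instead I would use the cleaner route: a tile $Y_0$ with translations $Z_0$ gives a partition of $G$, and I can select within a huge $Y_0$ a sub-pattern. Concretely, the robust approach is: take $Y_0$ tiling with $Z_0$, restrict attention to a large finite ``box'' $F \subset Z_0$, and form $Y = Y_0 + (F \cap \text{something})$. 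Rather than fight this, I would instead take two tiles and combine: if $Y_1$ tiles with $Z_1$ and $Y_2 \subset Z_1$ is finite with $|B' + Y_2|/|Y_2|$ small for the ``induced'' set $B'$, then $Y_1 + Y_2$ tiles with $Z_1 \setminus (\dots)$. The honest statement of the induction is: a tile $Y$ with translations $Z$, together with a finite $W \subset Z$, yields a new tile $Y + W$ provided $Z' := \{z \in Z : z + W \subset Z\}$ still tiles, i.e.\ $Y + W + Z' = G$. Making $W$ absorb $B$ for the group $Z$ (viewed via the tiling) is then exactly Lemma~\ref{aligno} applied inside $Z$.

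So the steps in order: (1) fix $B$ and $\varepsilon$; pick a tile $Y_0$ with translation set $Z_0$ (arbitrarily large, by the quoted lemma). (2) The set $Z_0$ is infinite; consider the ``shadow'' $B_0 = \{z_1 - z_2 : z_i \in Z_0,\ (Y_0+z_1)\cap(Y_0+z_2+B)\neq\emptyset\}$, a finite subset of $G$ capturing how $B$ acts on the tiling. (3) Apply Lemma~\ref{aligno} to get a finite $W \subset Z_0$ with $|B_0' + W|/|W| < 1+\varepsilon'$ where $B_0'$ is the relevant finite set governing overlaps; shrink $W$ if needed so that $Y_0 + W$ is still tileable by the residual translations. (4) Set $Y = Y_0 + W$: it is a tile, and a direct count gives $|B + Y| \le |B + Y_0 + W| \le |Y_0|\cdot|B_0' + W| < (1+\varepsilon)|Y_0|\cdot|W| = (1+\varepsilon)|Y|$ once disjointness of the translates $Y_0 + w$, $w \in W$, is used.

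The main obstacle is step (3)--(4): ensuring that after enlarging $Y_0$ by $W$ the result is \emph{still} a tile, not merely a finite set with good expansion. This requires that $W$ be chosen as a F\o lner-type set inside the tiling lattice $Z_0$ that is also ``interior'' enough that the leftover translations $Z_0 \setminus N(W)$ (for an appropriate boundary neighborhood $N(W)$ in $Z_0$) still cover $G$ when added to $Y_0 + W$. In an abelian group one can arrange this by a nested construction --- first choose a large tile $Y_0$, inside its translation set $Z_0$ choose a large tile $W_0$ with translations $Z_1 \subset Z_0$, and only then apply Lemma~\ref{aligno} --- at the cost of a two-stage argument. I expect the write-up to be a page once these bookkeeping details (finiteness of all the shadow sets, disjointness counts, the choice of $\varepsilon'$ in terms of $\varepsilon$) are carried out carefully; none of the individual estimates is deep, but the interplay between ``tile'' and ``F\o lner'' must be handled with some care.
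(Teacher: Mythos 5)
Your proposal does not reach a proof: the step you yourself flag as ``the main obstacle'' is the entire content of the lemma, and it is never filled. Two concrete problems. First, the translation set $Z_0$ of a tile $Y_0$ is in general just a set, not a subgroup of $G$ (for $G=\mathbb{Q}$, which is divisible and so has no proper finite-index subgroups, no finite tile with more than one element can have a subgroup as its translation set), so there is no group structure ``inside $Z_0$'' to which Lemma~\ref{aligno} could be applied, and no reason that $Y_0+W$ should tile $G$ for a F\o lner-type $W\subset Z_0$. You acknowledge this (``this does not literally work'') and gesture at a two-stage nested construction, but that construction is not carried out and runs into the same obstruction at each stage: reconciling ``tile'' with ``F\o lner'' is not bookkeeping, it is the whole difficulty. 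Second, your starting point --- recalling that arbitrarily large finite tiles exist --- is circular in the paper's architecture: that statement is recorded earlier only as an unproved lemma whose proof is explicitly deferred to the present Lemma~\ref{joparketta}, so it cannot be used as an ingredient without an independent argument, which you do not supply.

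The paper's proof sidesteps all of this by reducing to the subgroup $H$ generated by $B$. Being finitely generated abelian, $H$ is isomorphic to $\setZ^d\times F$ with $F$ finite, and the explicit box $Y'=\{1,\dots,n\}^d\times F$ tiles it by $(n\setZ)^d\times\{0\}$ while satisfying $|\varphi(B)+Y'|/|Y'|\le\bigl((n+2m)/n\bigr)^d<1+\varepsilon$ for $n$ large, where $m$ bounds the coordinates of $\varphi(B)$. Pulling back gives a tile of $H$ with the required expansion bound, and adding one representative from each coset of $H$ to the translation set extends the tiling to all of $G$. The idea missing from your approach is precisely this reduction to the finitely generated subgroup generated by $B$, inside which explicit boxes are simultaneously tiles and F\o lner sets; with it, the lemma is a few lines, and without it the amplification you describe has no foothold.
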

\begin{proof}
  Let $H$ be the subgroup generated by $B$. As a finitely generated commutative group, $H$ is isomorphic
to $H' = \setZ^d \times F$ with some finite group $F$. Let $\varphi: H \to H'$ be the isomorphism. First we tile $H'$.
If $d=0$, put $Y'=H'$, $Z'=\{0 \}$. In this case
 \[ \frac{|\varphi(B)+Y'|}{|Y'|} =1. \]

 If $d \geq 1$, put $Y'=\{1,2, \ldots, n \}^d \times F$, $Z' = (n\setZ)^d \times \{0 \}$
with some $n$. If $m$ is the maximum of the absolute value of coordinates of $\varphi(B)$, then
 \[ \frac{|\varphi(B)+Y'|}{|Y'|} \leq \zj{\frac{n+2m}{n}}^d < 1+\varepsilon \]
if $n$ is large enough.

$Y= \varphi^{-1}(Y')$ satisfies $ |B+Y|/|Y| < 1+\varepsilon$ and tiles $H$ with $Z^*= \varphi^{-1}(Z')$.

Finally take a set $R$ which contains exactly one elment from each coset $H+t$ of $H$ and put
$Z=Z^*+R$ to see that $Y$ also tiles $G$.
\end{proof}

\begin{proof}[Proof of Theorem \ref{umdwitness}]
The ``if'' part of Theorem \ref{umdwitness} is a special case of Theorem
\ref{tanufelt}.

To prove necessity we shall construct, given a collection $\B$ that fails to satisfy \eqref{tanu2},
a set $A$ such that
 \[   d^*(A) < \inf _{B\in \B}   \max_t \frac{|A \cap (B+t)|}{|B|}  . \]
The fact
  that  $\B$ fails to satisfy \eqref{tanu2} means that there is a finite
  set $T\vr G$ such that $ |B+T|/ |B|>c>1$ for all ${B\in \B}$. We may suppose $0\in T$, since this property
 is translation invariant.

 Take a tile $Y$, tiling $G$ with some set $Z$, such that
  \[ \frac{|T+Y|}{|Y|} =c' <c .\]

 For each ${B\in \B}$ take an element
 $t(B)\in G$ such that all sets $B+T+t(B)$ are disjoint. The existence of such $t(B)$ is easily shown
 by a transfinite induction, since the cardinality of the collection of finite subsets of $G$
 is the same as that of $G$.

 For every $z\in Z$ select an $a(z)\in (Y+z) \cap (B+t(B))$ if there is such a  ${B\in \B}$.
 By the disjointness assumption there can be at most one such $B$. If there is no such $B$,
 let $a(z)\in Y+z$ arbitrary. Put
  \[ A = \{a(z): z\in Z . \} \]
Clearly $d^*(A)=1/|Y|$.

Now we estimate $n= |A\cap (B+t(B)|$. The set $B+t(B)$ is covered by $n$ translations of $Y$, hence
$(B+t(B))+T$ is covered by $n$ translations of $Y+T$. Comparing the sizes of these sets we obtain
 \[ c|B| < |B+T| \leq n|Y+T| = c' n |Y|, \]
 that is,
  \[ \frac{ |A\cap (B+t(B)|}{|B|} = \frac{n}{|B|} > \frac{c}{c'} \frac{1}{|Y|} =\frac{c}{c'} d^*(A).\]
\end{proof}

\bigskip

     Sometimes we can replace the infimum by a limit.

     \begin{Th} 
     Assume that $G$ is countable, and let $B_i$ be a sequence of finite subsets of $G$ with the property that for every
$x\in G$ we have
     \[   
     \frac{ |(B_i+x) \setminus  B_i| }{|B_i|} \to  0 .  
     \]
     Then
     \[   d^*(A) = \lim_{i\to\infty}   \max_t \frac{|A \cap (B_i+t)|}{|B_i|}.  \]
   \end{Th}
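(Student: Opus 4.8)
The plan is to obtain this as an immediate specialization of Theorem~\ref{sorozattanu} to $0$--$1$ valued functions. First I would note that the hypothesis imposed on the sequence $B_i$ here, namely that $|(B_i+x)\setminus B_i|/|B_i|\to 0$ for every $x\in G$, is verbatim condition~\eqref{elnyel} of Theorem~\ref{sorozattanu} (with the dummy letter $x$ in place of $t$). Hence that theorem applies to any bounded function on $G$, and in particular to the indicator function $f=\1 A$, giving
\[
M^*(\1 A) = \lim_{i\to\infty} \sup(\1 A\,|\,B_i).
\]

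Next I would translate both sides into the density language. By the definition of the uppermost density, $d^*(A)=M^*(\1 A)$. For the right-hand side, for a fixed finite nonempty $B_i$ one computes, directly from the definition of the conditional expectation, $(\1 A\,|\,B_i)(t)=\frac{1}{|B_i|}\sum_{b\in B_i}\1 A(b+t)=\frac{|A\cap(B_i+t)|}{|B_i|}$. As $t$ ranges over $G$ this quantity takes only the finitely many values $0,1/|B_i|,\dots,1$, so its supremum over $t$ is attained and equals $\max_t \frac{|A\cap(B_i+t)|}{|B_i|}$. Substituting these two identities into the displayed limit yields exactly the asserted formula $d^*(A)=\lim_{i\to\infty}\max_t \frac{|A\cap(B_i+t)|}{|B_i|}$.

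There is essentially no obstacle to overcome: all the substantive work — that condition~\eqref{elnyel} forces $|B_i+T|/|B_i|\to 1$ for every finite $T$, so that every infinite subcollection of the $B_i$ is a witness and the $\sup(f|B_i)$ actually converge to $M^*(f)$ — is already contained in the proof of Theorem~\ref{sorozattanu}. The present statement adds only the trivial remarks that $M^*$ restricted to indicators is $d^*$ and that a supremum of finitely many reals is a maximum. The standing assumption that $G$ is countable plays no role in the argument beyond guaranteeing, via the last clause of Theorem~\ref{sorozattanu}, that sequences $B_i$ satisfying the hypothesis exist at all, so that the statement is not vacuous.
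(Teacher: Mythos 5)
Your proposal is correct and matches the paper exactly: the paper disposes of this theorem with the single remark that it is a special case of Theorem~\ref{sorozattanu}, and your write-up simply spells out that specialization (taking $f=\1 A$, identifying $(\1 A\,|\,B_i)(t)$ with $|A\cap(B_i+t)|/|B_i|$, and noting the supremum is attained). No gaps.
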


This is a special case of Theorem \ref{sorozattanu}.
   \begin{Rem}
       This is the way upper Banach density is usually defined in $\setZ$ (with intervals) or in $\setZ^k$ (with cubes or balls).
     \end{Rem}

\section{Density,  packing  and covering}

\begin{Def}
  Let $A\subset G$. For a positive integer $k$ let $p(k)$ (packing) denote the largest integer with the property that
there exist $t_1, \dots , t_k\in G$, such that every element of $G$ is contained in at most in $p(k)$ of the sets $A+t_i$.
\end{Def}

     \begin{Th}  \label{L}

 We have
     \[   \lim p(k)/k = d^*(A).  \]
     \end{Th}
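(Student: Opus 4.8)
The plan is to re‑express $p(k)$ — which, in the notation of the definition above, is $\min_{t_1,\dots,t_k\in G}\max_{x\in G}\sum_{i=1}^{k}\1{A+t_i}(x)$ — in terms of convex combinations of translates of $\1{A}$, and then appeal to the representation $d^*(A)=M^*(\1{A})=\inf_{g}\sup_{x}g(x)$, where $g$ runs over the convex combinations $g(x)=\sum_i c_i\1{A}(x+t_i)$ with $c_i\geq0$, $\sum_i c_i=1$ (this is the definition of $M^*$ and of the uppermost density, cf. Theorem~\ref{lf}). The elementary link is that, writing $u_i=-t_i$, one has $\1{A+t_i}(x)=\1{A}(x+u_i)$, so for any $t_1,\dots,t_k\in G$
\[ \frac{1}{k}\max_{x\in G}\sum_{i=1}^{k}\1{A+t_i}(x)=\sup_{x\in G}\frac{1}{k}\sum_{i=1}^{k}\1{A}(x+u_i); \]
the right‑hand side is the supremum of a convex combination of translates of $\1{A}$ all of whose weights are integer multiples of $1/k$. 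Hence $p(k)/k$ is precisely the infimum of $\sup_x g(x)$ over those convex combinations $g$ whose coefficients are integer multiples of $1/k$.

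First I would prove $p(k)/k\geq d^*(A)$ for \emph{every} $k$: given any $t_1,\dots,t_k$, the function $g=\frac1k\sum_i\1{A}(\cdot+u_i)$ is a bona fide convex combination of translates of $\1{A}$, so $\sup_x g(x)\geq M^*(\1{A})=d^*(A)$; taking the minimum over $t_1,\dots,t_k$ and using the displayed identity gives $p(k)/k\geq d^*(A)$. In particular $\liminf_{k}p(k)/k\geq d^*(A)$.

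Next I would prove $\limsup_{k}p(k)/k\leq d^*(A)$. Fix $\varepsilon>0$ and, by the definition of $M^*$, choose a convex combination $g_0(x)=\sum_{j=1}^{N}c_j\1{A}(x+v_j)$ with $\sup_x g_0(x)<d^*(A)+\varepsilon$. For $k$ large (depending only on $N$ and $\varepsilon$) set $a_j=\low{kc_j}$ for $j<N$ and $a_N=k-\sum_{j<N}a_j$; these are nonnegative integers with $\sum_j a_j=k$ and $\sum_j|c_j-a_j/k|<\varepsilon$. Let $t_1,\dots,t_k$ consist of $a_j$ copies of $-v_j$ for each $j$; then the displayed identity gives $\frac1k\max_x\sum_i\1{A+t_i}(x)=\sup_x g_1(x)$ with $g_1=\sum_j(a_j/k)\1{A}(\cdot+v_j)$, so $p(k)/k\leq\sup_x g_1(x)$. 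Since $|g_1(x)-g_0(x)|\leq\sum_j|a_j/k-c_j|<\varepsilon$ for all $x$, this is less than $d^*(A)+2\varepsilon$. Hence $p(k)/k<d^*(A)+2\varepsilon$ for all sufficiently large $k$, and letting $\varepsilon\to0$ gives $\limsup_{k}p(k)/k\leq d^*(A)$. Together with the previous paragraph, $\lim_{k}p(k)/k$ exists and equals $d^*(A)$.

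The only step carrying genuine content is the rounding in the last paragraph — approximating an arbitrary, possibly irrational, convex weight vector by one with common denominator exactly $k$, uniformly in $x$. That is elementary, and is where $\limsup p(k)/k\leq d^*(A)$ comes from; the matching lower bound is just the trivial fact that every convex combination of translates of $\1{A}$ is an upper bound for $M^*(\1{A})=d^*(A)$.
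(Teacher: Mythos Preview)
Your proof is correct, but the route to the upper bound differs from the paper's. Both arguments handle the lower bound $p(k)/k\geq d^*(A)$ in essentially the same way (any equal-weight convex combination of translates of $\1 A$ is admissible in the definition of $M^*$). For $\limsup p(k)/k\leq d^*(A)$, however, the paper first notes that $p$ is subadditive (so $\lim p(k)/k=\inf_k p(k)/k$), and then invokes Theorem~\ref{umd}: the $B$-average characterization of $d^*$ already produces, for each $\varepsilon>0$, a finite set $B$ with $\max_t|A\cap(B+t)|\leq|B|(d^*(A)+\varepsilon)$, and taking $k=|B|$ with $\{t_1,\dots,t_k\}=-B$ gives $p(k)/k\leq d^*(A)+\varepsilon$ directly. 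Your argument avoids both subadditivity and Theorem~\ref{umd}, instead rounding an arbitrary near-optimal convex combination to one with denominator $k$; this is more self-contained and proves existence of the limit by squeezing, at the cost of a small extra computation. The paper's version is shorter because Theorem~\ref{umd} has already done the work of replacing arbitrary convex weights by equal ones.
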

     \begin{proof}
Observe first that $p$ is subadditive, hence the limit exists.

       Write $f=\1A$ and $g(x) = \sum f(x-t_j)$. We have $g(x) \leq p(k)$ and
        \[ k d^*(A)= k M^*(f) = M^*(g) \leq p(k), \]
hence $d^*(A) \leq p(k)/k$ for all $k$.

On the other hand, by Theorem \ref{umd} for every $\varepsilon>0$ we find a finite set $B$ such that
 \[ |A\cap(B+t)| \leq |B| (d^*(A)+\varepsilon) \]
 for all $t$. With $k= |B|$ and $-B= \{ t_1, \dots , t_k   \}$ this shows $p(k) \leq k( d^*(A)+\varepsilon)$, that is,
  \[ \limsup p(k)/k \leq d^*(A) . \]
     \end{proof}

     \begin{Def}
     Let $A\subset G$. For a positive integer $k$ let $lc(k)$ (covering) denote the largest integer with the property that
there exist $t_1, \dots , t_k\in G$, such that every element of $G$ is contained in at least in $c(k)$ of the sets $A+t_i$.
\end{Def}

\begin{Th}  \label{l}
 We have
     \[   \lim c(k)/k = d_* (A).  \]
     \end{Th}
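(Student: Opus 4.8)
The plan is to mirror, almost verbatim, the proof of Theorem \ref{L}, replacing ``packing'' with ``covering'', ``at most'' with ``at least'', maxima with minima, and upper means with lower means (equivalently, passing to complements). First I would note that $c$ is superadditive: if $t_1,\dots,t_k$ witness $lc(k)$ and $s_1,\dots,s_m$ witness $lc(m)$, then $t_1,\dots,t_k,s_1,\dots,s_m$ show $lc(k+m)\geq lc(k)+lc(m)$, so by Fekete's lemma $\lim c(k)/k$ exists (and equals $\sup_k c(k)/k$).

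Next I would prove $c(k)/k \leq d_*(A)$ for every $k$. Write $f=\1A$ and, for a choice of $t_1,\dots,t_k$ achieving the covering bound $c(k)$, set $g(x)=\sum_{j=1}^k f(x-t_j)$. Then $g(x)\geq c(k)$ for all $x$, so $\inf_x(g|B)(t)\geq c(k)$ for every set $B$ and every $t$; taking the supremum over $B$ and $t$ and using the formula $M_*(g)=\sup_{B\vr G}\inf(g|B)$ (the lowest-mean version of Theorem \ref{umm}, as restated for general functions), we get $M_*(g)\geq c(k)$. By the restricted additivity of the lowest mean (Theorem \ref{lf}, applied $k$ times in the form $M_*(g)=M_*\bigl(\sum f(x-t_j)\bigr)=k M_*(f)$ — here I use that $M_*$, being a lower mean, satisfies (d) and (c), and $f$, hence $g$, need not have disjoint translates: general restricted additivity for lower means follows from Theorem \ref{genrest} applied to $-M_*(-\cdot)=M^*$), we obtain $k\,d_*(A)=kM_*(f)=M_*(g)\geq c(k)$, i.e. $c(k)/k\leq d_*(A)$.

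For the reverse inequality $\liminf c(k)/k\geq d_*(A)$, I would use the formula $d_*(A)=\sup_{B\vr G}\min_t\frac{|A\cap(B+t)|}{|B|}$ from the lowest-density version of Theorem \ref{umd}. Fix $\varepsilon>0$ and choose a finite set $B$ with $|A\cap(B+t)|\geq |B|(d_*(A)-\varepsilon)$ for all $t\in G$. Put $k=|B|$ and $\{t_1,\dots,t_k\}=-B$. Then for any $x\in G$, the number of indices $j$ with $x\in A+t_j$ equals $|\{b\in B: x-b\in A\}|=|A\cap(x-B)|\geq k(d_*(A)-\varepsilon)$, so these translates witness $lc(k)\geq k(d_*(A)-\varepsilon)$, giving $c(k)/k\geq d_*(A)-\varepsilon$ for this particular $k$; since $\lim c(k)/k$ exists (equals the supremum), this yields $\lim c(k)/k\geq d_*(A)-\varepsilon$, and letting $\varepsilon\to 0$ finishes the proof.

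The only point requiring a little care — and the ``main obstacle'' such as it is — is the appeal to restricted additivity for the \emph{lowest} (lower) mean applied to a sum of translates whose supports overlap: one must invoke general restricted additivity (Theorem \ref{genrest}) via the conjugacy $M_*(f)=-M^*(-f)$ rather than the naive disjoint-translates form stated for densities, since the sets $A+t_j$ in the covering problem are by design heavily overlapping. Everything else is a direct transcription of the proof of Theorem \ref{L} with inequalities reversed.
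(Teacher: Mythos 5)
Your proof is correct, but it is not the route the paper takes: the paper's entire proof of Theorem \ref{l} is the one-line reduction ``apply Theorem \ref{L} to the complement of $A$''. The point is the exact duality $c_A(k)=k-p_{G\setminus A}(k)$ (for a fixed choice of translates, a point lies in at least $c$ of the sets $A+t_i$ iff it lies in at most $k-c$ of the sets $(G\setminus A)+t_i$), combined with $d_*(A)=1-d^*(G\setminus A)$, which turns the packing theorem for $G\setminus A$ directly into the covering theorem for $A$. What you do instead is re-run the whole proof of Theorem \ref{L} mutatis mutandis: superadditivity of $c$ plus Fekete, the bound $c(k)\leq M_*(g)=kM_*(f)$ via general restricted additivity for the conjugate lower mean, and the converse via the $\sup\,\min$ formula for $d_*$. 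That is a legitimate and complete argument; its only cost is that you must invoke the lowest-mean/lowest-density conjugate forms of Theorems \ref{lf}, \ref{umm} and \ref{umd}, which the paper states only for $M^*$ and $d^*$ (they do follow from $M_*(f)=-M^*(-f)$, as you note), whereas the complementation trick uses the packing theorem as a black box. One tiny slip: with $\{t_1,\dots,t_k\}=-B$ the number of indices $j$ with $x\in A+t_j$ is $|\{b\in B: x+b\in A\}|=|A\cap(B+x)|$, not $|A\cap(x-B)|$; this is what makes the set a translate of $B$ so that the bound from Theorem \ref{umd} applies. It does not affect the validity of the argument.
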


     \begin{proof}
       Apply the previous theorem for the complement of $A$.
     \end{proof}

The following result was obsrved in several situations.

\begin{Th}
  Let $A$ be a set of positive uppermost density. We have
   \[ \up{\frac{1}{d_*(A-A)} } \leq \low{\frac{1}{d^*(A)} } ,     \]
consequently
 \[ d_*(A-A) \geq d^*(A) .\]
\end{Th}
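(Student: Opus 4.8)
The plan is to use the packing/covering circle of ideas from this section. Write $\delta=d^*(A)$, which lies in $(0,1]$ since $M^*(\1A)\le\sup\1A\le1$, and set $m=\low{1/\delta}$, a positive integer with $m\delta\le 1<(m+1)\delta$. Everything reduces to showing $d_*(A-A)\ge 1/m$: this gives $1/d_*(A-A)\le m$ and hence $\up{1/d_*(A-A)}\le m=\low{1/d^*(A)}$, while $1/m\ge\delta$ then yields $d_*(A-A)\ge d^*(A)$.

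The combinatorial core I would prove first is: among any $m+1$ elements $s_1,\dots,s_{m+1}\in G$ there is a pair $i\neq j$ with $s_i-s_j\in A-A$. Suppose not; since $0\in A-A$ the $s_i$ are distinct, and since $(A+s_i)\cap(A+s_j)\neq\emptyset$ is equivalent to $s_i-s_j\in A-A$, the translates $A+s_i$ are pairwise disjoint. Then $g=\sum_{i=1}^{m+1}\1A(\cdot-s_i)$ is the indicator of their union, so $\sup g\le1$; but Theorem~\ref{genrest} gives $M^*(g)=(m+1)M^*(\1A)=(m+1)\delta$, and $M^*(g)\le\sup g$, forcing $(m+1)\delta\le1$, a contradiction. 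Next I would extract a covering of $G$ by at most $m$ translates of $A-A$ by a greedy argument: choose $u_1\in G$ arbitrarily, and while $\bigcup_{i\le j}((A-A)+u_i)\neq G$ pick $u_{j+1}$ outside this union. Then $u_{j+1}-u_i\notin A-A$ for $i\le j$, and by symmetry of $A-A$ also $u_i-u_{j+1}\notin A-A$; inductively $\{u_1,\dots,u_{j+1}\}$ has all pairwise differences of distinct elements outside $A-A$, so the core bound forces $j+1\le m$. Hence the process stops and $G=\bigcup_{i=1}^{\ell}((A-A)+u_i)$ for some $\ell\le m$.

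Finally I would convert this covering into a density bound. Repeating the $\ell$-translate cover $n$ times exhibits $n\ell$ translates of $A-A$ in which every point of $G$ lies in at least $n$ of them, so in the notation of Theorem~\ref{l} (applied to $A-A$ in place of $A$) we have $c(n\ell)\ge n$; since $d_*(A-A)=\lim_k c(k)/k$ exists, evaluating along $k=n\ell$ gives $d_*(A-A)\ge 1/\ell\ge 1/m$. (Equivalently: from $\1G\le\sum_{i=1}^\ell\1{A-A}(\cdot-u_i)$, monotonicity of the lowest mean together with the conjugate form of Theorem~\ref{genrest} yields $1=M_*(\1G)\le M_*\bigl(\sum_{i=1}^\ell\1{A-A}(\cdot-u_i)\bigr)=\ell\,d_*(A-A)$.) I do not anticipate a serious obstacle; the only points requiring care are that the greedy step uses \emph{all} pairwise differences, hence the symmetry of $A-A$, and that the pigeonhole step really needs the sharp inequality $(m+1)\delta>1$ coming from $m=\low{1/\delta}$ — this is exactly what makes the floor/ceiling refinement, and not merely $d_*(A-A)\ge d^*(A)$, come out.
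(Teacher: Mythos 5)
Your proof is correct and is essentially the paper's argument in different clothing: your ``core claim'' plus greedy selection produces exactly a maximal family $\{u_i\}$ with the translates $A+u_i$ pairwise disjoint, your pigeonhole bound $\ell\leq m$ is the paper's $p(k)=1\Rightarrow k\leq 1/d^*(A)$, and the resulting cover of $G$ by $\ell$ translates of $A-A$ is the same maximality observation the paper uses to conclude $d_*(A-A)\geq 1/\ell$. No gaps; the floor/ceiling refinement comes out the same way.
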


\begin{proof}
  Let $k$ be the largest integer with the property that
there exist $t_1, \dots , t_k\in G$, such that  the sets $A+t_i$ are disjoint. In the notation of Theorem \ref{L}
this means that $p(k)=1$ and hence $k \leq 1/d^*(A)$. For every $x\in G$ the set $A+x$ must intersect some $A+t_i$,
that is, $x+a=t_i+a'$ with $a, a'\in A$ which means $x\in t_i + (A-A)$. This shows $d_*(A-A) \geq 1/k$
\end{proof}

\section{Density and measure}

Some densities, like the asymptotic and logarithmic ones, are defined as a limit of measures.
It would be interesting to explore the extent of this possibility.

\begin{Def}
  An upper density $\df$ is \emph{mensural}, if there is a sequence $\mu_j$ of measures on $G$ such that for all sets
  we have
   \[ \df (A) = \limsup \mu_j(A) .\]
\end{Def}

\begin{Prob}
  Which sequences of measures define a density in our sense?
\end{Prob}

\begin{Conj}
  Those which satisfy
   \[ \lim_{n\to\infty} \sum_{x\in G } |\mu_n(x)-\mu_n(x+t)| = 0\]
for all $t\in G$.
\end{Conj}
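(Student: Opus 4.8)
The statement is a proposed characterization, so the plan is to prove that the sequence $\mu_n$ ``defines a density'' — i.e.\ that $\df(A):=\limsup_n\mu_n(A)$ is an upper density in the sense of the preceding section — if and only if $\lim_n\sum_{x\in G}|\mu_n(x)-\mu_n(x+t)|=0$ for every $t\in G$. I take the $\mu_n$ to be probability measures (this is the intended reading, and in any case $\df(G)=1$ already forces $\limsup_n\mu_n(G)=1$). For the ``if'' direction I will display the upper mean behind $\df$ directly, namely
\[ \Mf(f):=\limsup_{n\to\infty}\sum_{x\in G}\mu_n(x)f(x); \]
since $\Mf(\1{A})=\limsup_n\mu_n(A)=\df(A)$, it suffices to check that $\Mf$ is an upper mean. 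For the ``only if'' direction I will argue that failure of the $\ell^1$-translation condition forces $\df$ to violate the perturbation invariance that every upper density must enjoy.

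For the ``if'' part, properties (a) (norming, using $\mu_n(G)=1$), (b) (monotonicity, since $\mu_n\ge0$), (c) (homogeneity) and (e) (subadditivity, as $\limsup$ is subadditive) are immediate. The one substantive point is restricted additivity (d). For any bounded $f$ and any $t$, reindexing gives
\[ \sum_x\mu_n(x)f(x+t)-\sum_x\mu_n(x)f(x)=\sum_y f(y)\bigl(\mu_n(y-t)-\mu_n(y)\bigr), \]
so this difference is in absolute value at most $\bigl(\sup_x|f(x)|\bigr)\sum_y|\mu_n(y-t)-\mu_n(y)|=\bigl(\sup_x|f(x)|\bigr)\sum_y|\mu_n(y)-\mu_n(y+t)|$, which tends to $0$ by hypothesis. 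Hence for $g(x)=f(x)+f(x+t)$ we get $\sum_x\mu_n(x)g(x)=2\sum_x\mu_n(x)f(x)+o(1)$, and passing to the $\limsup$ yields $\Mf(g)=2\Mf(f)$. So $\Mf$ is an upper mean, $\df$ is an upper density, and this direction is finished.

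For the ``only if'' part, suppose $\df$ is an upper density but $\sum_x|\mu_n(x)-\mu_n(x+t)|\ge\varepsilon$ along a subsequence $n_k$, for some fixed $t$ and $\varepsilon>0$. Because $\sum_x\bigl(\mu_{n_k}(x)-\mu_{n_k}(x+t)\bigr)=\mu_{n_k}(G)-\mu_{n_k}(G)=0$, the positive part alone sums to $\tfrac12\sum_x|\mu_{n_k}(x)-\mu_{n_k}(x+t)|\ge\varepsilon/2$; taking $A_k=\{x:\mu_{n_k}(x)>\mu_{n_k}(x+t)\}$ therefore gives $\mu_{n_k}(A_k)-\mu_{n_k}(A_k+t)\ge\varepsilon/2$. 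Now $A_k+t\simeq A_k$, and more generally any set obtained by moving each point of a set a bounded distance is a perturbation of it, hence has the same $\df$-value by property (e) of upper densities (and by Theorem~\ref{commcomp} for commarginal sets). The aim is to extract from the family $\{A_k\}$ a single set $A$ together with a perturbation $A'$ of it for which $\limsup_n\mu_n(A)>\limsup_n\mu_n(A')$ — a contradiction. The point is that each $\mu_{n_k}$ carries, on the set $A_k$, a ``defect structure'' — in the abelian case essentially a concentration on a union of cosets of a finite-index subgroup, possibly sitting far out along the sequence and seen only through a moving window — which is detected by bounded rearrangement; one wants to choose $A$ so that it contains one such structure asymptotically (making $\df(A)$ large) while $A$ is diluted inside every defect structure, so that the diluted rearrangement $A'$ has $\df(A')$ strictly smaller.

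The hard part is carrying out this extraction uniformly. The defect structures of the various $\mu_{n_k}$ may be pairwise incomparable and infinite in number, so the single witness $A'$ must have density strictly below $1$ within each of infinitely many periodic structures while remaining a bounded rearrangement of the fixed $A$; and one must also guarantee that no $\mu_n$ outside the bad subsequence pushes $\limsup_n\mu_n(A')$ back up to $\limsup_n\mu_n(A)$ (the $\limsup$ is taken over the whole sequence, where the ``good'', translation-smooth $\mu_n$ may dominate). A diagonal construction over a countable list of defect structures is plausible, but making it robust — in particular for uncountable $G$, and for mass escaping to infinity — is the genuine obstacle, and is presumably why the characterization is stated only as a conjecture. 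One could instead try to route ``only if'' through the uniqueness of the upper mean induced by an upper density (the corollary to Theorem~\ref{meanfromdesity}), promoting the set-level violation to a violation of restricted additivity for functions rather than sets; but this appears to meet the same difficulty.
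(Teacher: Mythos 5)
This statement is labelled a \emph{Conjecture}: the paper offers no proof of it, only the one-line remark that the condition ``is easily seen to be sufficient.'' Your treatment of the sufficiency half is correct and is exactly that easy argument: the upper mean $\Mf(f)=\limsup_n\sum_x\mu_n(x)f(x)$ restricts to $\df$ on indicator functions, and the only substantive axiom, restricted additivity, follows from the reindexing bound $\bigl|\sum_x\mu_n(x)\bigl(f(x+t)-f(x)\bigr)\bigr|\le\sup|f|\cdot\sum_x|\mu_n(x)-\mu_n(x+t)|\to0$. (Your standing assumption that the $\mu_n$ are probability measures is genuinely needed for norming with negative constants, since $\df(G)=1$ only forces $\limsup\mu_n(G)=1$ rather than $\mu_n(G)\to1$; but it is the natural reading.)

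The necessity half is where your argument stops being a proof, and you say so yourself. The concrete gap is the passage from the sequence of defect sets $A_k=\{x:\mu_{n_k}(x)>\mu_{n_k}(x+t)\}$, each satisfying $\mu_{n_k}(A_k)-\mu_{n_k}(A_k+t)\ge\varepsilon/2$, to a single fixed set $A$ violating translation or perturbation invariance of $\df$: the witnessing set moves with $k$; the candidate $A$ must be tested against the $\limsup$ over the \emph{whole} sequence, where the translation-smooth $\mu_n$ may dominate and equalize $\limsup_n\mu_n(A)$ and $\limsup_n\mu_n(A+t)$; and it is not clear that a failure of the axioms must already be visible on sets rather than only on general bounded functions. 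This is precisely why the statement is posed as a conjecture rather than a theorem. So your proposal establishes what the paper establishes (sufficiency, by the same argument the authors have in mind) and honestly leaves open what the paper leaves open (necessity); just be aware that you have not proved the stated characterization, and neither do the authors claim to.
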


This condition is easily seen to be sufficient.

\begin{Prob}
  Find an inner characterization of mensural densities.
\end{Prob}

No conjecture. We can show that the uppermost density is not mensural.


\end{document}